\newcommand{\mathz}{\mathbb{Z}}
\newcommand{\mathr}{\mathbb{R}}
\newcommand{\G}{\mathbbmss{G}}
\newcommand{\U}{\mathbbmss{U}}
\newcommand{\B}{\mathbbmss{B}}
\newcommand{\T}{\mathbbmss{T}}
\renewcommand{\P}{\mathbbmss{P}}
\newcommand{\N}{\mathbbmss{N}}
\renewcommand{\O}{\mathcal{O}}
\newcommand{\F}{\mathbbmss{F}}
\newcommand{\ev}{{\mathsf{ev}}}
\newcommand{\Gev}{{\G_{\ev}}}
\newcommand{\Gevv}{\G_{\ev}}
\newcommand{\Uev}{{\U_{\ev}}}
\newcommand{\Bev}{{\B_{\ev}}}
\newcommand{\Tev}{{\T_{\ev}}}
\newcommand{\Uevp}{{\U^+_{\ev}}}
\newcommand{\Bevp}{{\B^+_{\ev}}}
\newcommand{\g}{\mathfrak{g}}
\newcommand{\h}{\mathfrak{h}}
\newcommand{\m}{\mathfrak{m}}
\newcommand{\uu}{\mathfrak{u}}
\newcommand{\q}{\mathfrak{q}}
\newcommand{\GL}{\mathbbmss{GL}}
\newcommand{\GLL}{\mathrm{GL}}
\newcommand{\SL}{\mathbbmss{SL}}
\newcommand{\SLL}{\mathrm{SL}}
\newcommand{\SpO}{\mathbbmss{SpO}}
\newcommand{\Q}{\mathbbmss{Q}}
\newcommand{\Ga}{G_{\mathsf{a}}}
\newcommand{\Gm}{G_{\mathsf{m}}}
\newcommand{\Gao}{G_{\mathsf{a}}^-}
\newcommand{\LL}{\mathtt{L}}
\newcommand{\RR}{\mathtt{R}}
\newcommand{\X}{\mathsf{X}}
\newcommand{\R}{\mathsf{R}}
\newcommand{\Z}{\mathcal{Z}}
\newcommand{\gpl}{\mathsf{g.l.}}
\newcommand{\hy}{\mathsf{hy}}
\newcommand{\Dist}{\mathsf{Dist}}
\newcommand{\Lie}{\mathsf{Lie}}
\newcommand{\Ker}{\mathsf{Ker}}
\newcommand{\Hom}{{\mathsf{Hom}}}
\newcommand{\Aut}{{\mathsf{Aut}}}
\newcommand{\End}{{\mathsf{End}}}
\newcommand{\Mat}{\mathsf{Mat}}
\newcommand{\Simple}{\mathsf{Simple}}
\newcommand{\Fr}{\mathsf{Fr}}
\newcommand{\id}{\mathsf{id}}
\newcommand{\coad}{\mathsf{coad}}
\newcommand{\ad}{\mathsf{ad}}
\newcommand{\Ad}{\mathsf{Ad}}
\newcommand{\Deltaa}{\triangle}
\newcommand{\eve}{{\bar{0}}}
\newcommand{\odd}{{\bar{1}}}
\newcommand{\cotens}{\mathbin{\Box}}
\newcommand{\hits}{\rightharpoonup}
\newcommand{\tint}{\begingroup\textstyle\int\endgroup}
\newcommand{\ttint}{\mathcal{I}}
\newcommand{\triact}{\mathbin{\triangleright}}
\newcommand{\triactt}{\mathbin{\triangleleft}}
\newcommand{\acthy}{\hits}
\newcommand{\acthydual}{\cdot}
\newcommand{\ind}{\mathsf{ind}}
\newcommand{\coind}{\mathsf{coind}}
\newcommand{\rad}{\mathsf{rad}}
\newcommand{\soc}{\mathsf{soc}}
\newcommand{\s}{\mathcal{S}}
\newcommand{\Cl}{\mathsf{Cl}}
\newcommand{\topp}{\mathsf{top}}
\newcommand{\e}{\mathsf{e}}
\newcommand{\bmu}{{\boldsymbol\mu}}
\newcommand{\bigset}[2]{\left\{#1\vphantom{#2}\;\right|\;\left.#2\vphantom{#1}\right\}}
\renewcommand{\det}{\mathsf{det}}
\newcommand{\Ber}{\mathsf{Ber}}
\renewcommand{\dim}{\mathsf{dim}}
\newcommand{\dist}{\alpha}
\newcommand{\adm}{special }
\newcommand{\qedd}{\hfill$\blacksquare$}
\newcommand{\defnote}[1]{{\it #1}}
\newtheorem{thm}{Theorem}[section]
\newtheorem{lemm}[thm]{Lemma}
\newtheorem{prop}[thm]{Proposition}
\newtheorem{cor}[thm]{Corollary}
\newtheorem{asum}[thm]{Assumption}
\theoremstyle{definition}
\newtheorem{deff}[thm]{Definition}
\newtheorem{rem}[thm]{Remark}
\newtheorem{ex}[thm]{Example}
\begin{document}
\title[Frobenius kernels of algebraic supergroups]{Frobenius kernels of algebraic supergroups and Steinberg's tensor product theorem}
\author[T.~Shibata]{Taiki Shibata}
\address{Department of Applied Mathematics,
Okayama University of Science,
1-1 Ridai-cho Kita-ku Okayama-shi, Okayama 700-0005, Japan}
\email{shibata@ous.ac.jp}
\dedicatory{Dedicated to Professor~Akira~Masuoka on the occasion of his~60th~birthday.}

\date{}
\subjclass[2010]{16T05, 17B10, 17A70}
\keywords{Hopf superalgebra, algebraic supergroup, Frobenius kernel, Steinberg's tensor product theorem}


\begin{abstract}
For a split quasireductive supergroup $\G$ defined over a field,
we study structure and representation of Frobenius kernels $\G_r$ of $\G$ and
we give a necessary and sufficient condition for $\G_r$ to be unimodular in terms of the root system of $\G$.
We also establish Steinberg's tensor product theorem for $\G$ under some natural assumptions.
\end{abstract}

\maketitle

\setcounter{tocdepth}{2}
\tableofcontents

\section{Introduction}\label{sec:Intro}
Structure and representation of algebraic group schemes (especially, connected and split reductive groups)
over a field have been well studied (see \cite{Jan03,Mil17} for example)
and provide applications in many areas such as combinatorics or number theory.
Over an algebraically closed field $\Bbbk$ of characteristic zero,
the Lie algebra $\Lie(G)$ of a connected and split reductive group (scheme) $G$ strongly reflects many properties of $G$ (see \cite{Hoc70})
and becomes a fundamental tool for studying representations of $G$.
For example, it is known that there exists a category equivalence between
the category of left $G$-modules and the category of locally finite left $\Lie(G)$-$T$-modules,
where $T$ denotes a split maximal torus of $G$.
Here, we say that a $\Lie(G)$-module $M$ is \defnote{$\Lie(G)$-$T$-module}
if the restricted $\Lie(T)$-module structure on $M$ arises from some $T$-module structure on it.
In particular, we can show that for a dominant weight $\lambda$,
the simple left $G$-module $L(\lambda)$ of highest weight $\lambda$
coincides with the induced representation $\ind^G_B(\Bbbk^\lambda)$ of the one-dimensional $T$-module $\Bbbk^\lambda$ of weight $\lambda$,
where $B$ denotes a fixed Borel subgroup of $G$.
The character of $L(\lambda)$ is explicitly given by Weyl's character formula.

On the other hand, over a field $\Bbbk$ of positive characteristic,
the situation is more complicated, since the simple left $G$-module $L(\lambda)$ may be a proper submodule of $\ind^G_B(\Bbbk^\lambda)$ in general.
In \cite{Tak74}, Takeuchi studied the \defnote{hyperalgebra} $\hy(G)$ of $G$
which is a natural refinement of the universal enveloping algebra $\mathcal{U}(\Lie(G))$ of $\Lie(G)$.
Note that, $\hy(G)$ is isomorphic to $\mathcal{U}(\Lie(G))$ as (cocommutative) Hopf algebras if $\mathsf{char}(\Bbbk)=0$.
By Hopf-algebraic method, as in the Lie algebra case, he showed $\hy(G)$ strongly reflects many properties of $G$ (see \cite{Tak74,Tak75,Tak83}).
There also holds a category equivalence between
the category of left $G$-modules and the category of locally finite left $\hy(G)$-$T$-modules (see \cite[Part~II, Chapter~1]{Jan03} for example).
Over a perfect field $\Bbbk$ of positive characteristic $p$,
for each positive integer $r$,
the kernel $G_r$ of the $r$-th iterated Frobenius morphism $\Fr^r:G\to G$,
called the \defnote{$r$-th Frobenius kernel} of $G$,
is a fundamental and powerful tool for studying $G$.
By definition, we have an ascending chain $G_1\subset G_2\subset\cdots \subset G$ of normal subgroup of $G$ and $\hy(G)=\varinjlim_r \hy(G_r)$.
Moreover, it is known that all Frobenius kernels $G_r$ are unimodular,
that is,
there exists non-zero two-sided integral for $G_r$, see Definition~\ref{def:unimo}.
Using the categorical equivalence of modules mentioned above,
we can show \defnote{Steinberg's tensor product theorem} (\cite[Part~II, Corollary~3.17]{Jan03})
which states that
as a left $G$-module, the simple left $G$-module $L(\lambda)$ decomposes into
some tensor products of $\Fr^r$-twisted simple left $G$-modules $L(\lambda_r)^{[r]}$ such as
\[
L(\lambda) \cong L(\lambda_0)\otimes L(\lambda_1)^{[1]}\otimes L(\lambda_2)^{[2]}\otimes\cdots\otimes L(\lambda_m)^{[m]}
\]
along the ``$p$-adic expansion'' $\lambda=\lambda_0+p\lambda_1+p^2\lambda_2+\cdots+p^m\lambda_m$ of $\lambda$,
where $\lambda_r$'s are \defnote{$p$-restricted weights} for $G$ (see Definition~\ref{def:rest-weight}).
In particular, the character of $L(\lambda)$ can be calculated by the product of the character of $L(\lambda_r)^{[r]}$.
Note that, if we write the character of a $G$-module $M$ as $\sum_{\lambda} \dim(M^\lambda) \e^\lambda$,
then the character of $\Fr^r$-twisted $G$-module $M^{[r]}$ is given by $\sum_{\lambda} \dim(M^\lambda) \e^{p^r\lambda}$.
Therefore, the decomposition tells us that to study a simple left $G$-module,
it is enough to consider simple left $G$-modules with $p$-restricted weights.
\medskip

In recent years, supergeometries and superalgebras have attracted much attention.
The word ``super'' is a synonym of ``graded by the group $\mathz_2$ of order two'' (see Section~\ref{sec:hopf}).
The symmetric tensor category of vector spaces is generalized by the category of superspaces (i.e., $\mathz_2$-graded vector spaces) with
the familiar tensor product and \defnote{supersymmetry}.
The classification of finite dimensional simple Lie superalgebras over an algebraically closed field of characteristic zero was done by Kac \cite{Kac77}.
Since then, many authors have studied the corresponding algebraic supergroup
(\cite{Kos77,Kos83,Pen88-1,Bos90,BruKuj03,Mas05,Zub06,CarCasFio11,Vis11,FioGav12} for example).
Here, an \defnote{algebraic supergroup} $\G$ is a representable functor from the category of commutative superalgebras to the category of groups;
the representing object $\O(\G)$ is a finitely generated commutative Hopf superalgebra.
In this paper, as the super-analogue of the connected and split reductive groups,
we study an algebraic supergroup $\G$ whose ``even part'' $\Gev$ is a connected and split reductive group,
called a \defnote{split quasireductive supergroup} (\cite{Shi20,Shi21}, see also \cite{Ser11,GriZub16}),
over a field.
The class of split quasireductive supergroup has many important algebraic supergroups,
for example,
the general linear supergroups $\GL(m|n)$,
the queer supergroups $\Q(n)$,
the periplectic supergroups $\P(n)$,
Chevalley supergroups of classical type
(including special linear supergroups $\SL(m|n)$ and ortho-symplectic supergroups $\SpO(m|n)$) due to Fioresi and Gavarini \cite{FioGav12}, etc.
As in the non super-situation,
if the base field is of characteristic zero,
then representation theory of a split quasireductive supergroup $\G$ is essentially the same as the Lie superalgebra $\Lie(\G)$ of $\G$.

In this paper, we are interested in modular representation theory of split quasireductive supergroup,
that is, the case when the characteristic of the base field $\Bbbk$ is positive.
As in the non super-situation,
we can define Frobenius kernels $\G_r$ of a split quasireductive supergroup $\G$
and these are also powerful tool for studying $\G$.
For example, using Frobenius kernels of $\GL(m|n)$,
Zubkov and Marko \cite{ZubMar16} provided the linkage principle and described blocks of $\GL(m|n)$.
In this paper, we give a necessary and sufficient condition for $\G_r$ to be unimodular in terms of the root system of $\G$.
Thus, in contrast to the non super-situation, there exists a non unimodular $\G_r$ (see Example~\ref{ex:non-unimo_p}).

Recently, it is shown by several authors that
Steinberg's tensor product theorem holds for $\GL(m|n)$ \cite{Kuj03}, $\Q(n)$ \cite{BruKle03} and $\SpO(m|n)$ \cite{ShuWan08}.
See also, \cite{CheShuWan19} for (simply connected) Chevalley supergroups of type $D(2|1;\zeta)$, $G(3)$ and $F(3|1)$.
For a split quasireductive supergroup $\G$ in general,
it has been shown in \cite{MasShi17} that
there exists a category equivalence between
the category of left $\G$-supermodules and the category of locally finite left $\hy(\G)$-$T$-supermodules.
Moreover, all simple left $\G$-supermodules have been systematically constructed in \cite{Shi20}.
Therefore, it is natural to ask whether Steinberg's tensor product theorem holds for $\G$, in general.
To answer this question, one encounters the following two difficulties:
\begin{enumerate}
\item\label{diff(1)}
not all simple left $\G$-supermodules are \defnote{absolutely simple},
that is,
there exists a simple left $\G$-supermodule which is no longer simple after base change to some field extension of $\Bbbk$
(see Definition~\ref{def:abs-irr});
\item\label{diff(2)}
the root system of $\G$ is ill-behaved (see Example~\ref{ex:root-sys}\eqref{ex:root-sys(4)} for example)
without suitable extra conditions on the ``odd part'' of $\G$.
\end{enumerate}
Note that, in the non super-situation, \eqref{diff(1)} never happens.

In this paper,
we prove that these difficulties \eqref{diff(1)} and \eqref{diff(2)} can be overcome by attaching appropriate natural conditions.
We first show that if the root system $\Deltaa$ of $\G$ does not contain
the unit $\mathbf{0}$ of the character group $\X(T)$ of a fixed split maximal torus of $\Gev$,
then all simple left $\G$-supermodules are absolutely simple (Proposition~\ref{prp:T=Tev-abs-simple}).
Therefore, to resolve \eqref{diff(1)},
we assume that \eqref{diff(1)}$'$ the base field $\Bbbk$ is algebraically closed if $\mathbf{0}\in\Deltaa$.
To resolve \eqref{diff(2)},
we also assume that \eqref{diff(2)}$'$ the root system $\Deltaa$ of $\G$ has a \defnote{\adm base}
(i.e., an existence of even/odd ``simple roots'' of $\Deltaa$), see Definition~\ref{asu:gen} for the detail.
We note that typical examples of split quasireductive supergroups (such as $\GL(m|n),\Q(n),\P(n)$ or Chevalley supergroups)
satisfy the conditions both \eqref{diff(1)}$'$ and \eqref{diff(2)}$'$.
Under these natural assumptions \eqref{diff(1)}$'$ and \eqref{diff(2)}$'$,
we establish Steinberg's tensor product theorem for $\G$ (Corollary~\ref{cor:main});
the result includes those by \cite{BruKle03,Kuj03,ShuWan08,CheShuWan19}.

\subsection*{Organization of this paper}
This paper is organized as follows:
In Section~\ref{sec:Prelim},
we review some basic definitions and results for Hopf superalgebras and algebraic supergroups defined over a field.
The Lie superalgebra $\Lie(\G)$ and the super-hyperalgebra $\hy(\G)$ of an algebraic supergroup $\G$ are reviewed in Section~\ref{sec:Lie(G)-hy(G)}.

In Section~\ref{sec:Quasired},
we define the notion of split quasireductive supergroups which is the main object of study in this paper.
Since the even $\Gev$ part of a split quasireductive supergroup $\G$ is a connected and split reductive group (scheme) by definition,
we fix a split maximal torus $T$ of $\Gev$.
Thus, inside of the character group $\X(T)$ of $T$,
we can define the root system $\Deltaa$ of $\G$ with respect to $T$ (Section~\ref{sec:root-sys})
which also has a parity $\Deltaa=\Deltaa_\eve\cup\Deltaa_\odd$.
Over a perfect field, we study structures of Frobenius kernels $\G_r$ of $\G$ in Section~\ref{sec:Frob-ker}.
In particular, we describe a basis of the Hopf superalgebra $\O(\G_r)$ (see \eqref{eq:basis_of_O(G_r)})
and establish the PBW theorem for the super-hyperalgebra $\hy(\G_r)$ of $\G_r$ (Theorem~\ref{prp:PBW-Gr}).

In Section~\ref{sec:Unimo},
we discuss the unimodularity of Frobenius kernels of a split quasireductive supergroup over a perfect field.
First, we review basic definitions and results for left/right (co)integrals of Hopf superalgebras (Section~\ref{sec:(co)int}).
A left (resp.~right) integral for an algebraic supergroup $\G$ is defined to be
a left (resp.~right) cointegral on the corresponding Hopf superalgebra $\O(\G)$.
We say that $\G$ is unimodular if there exists a two-sided (i.e., left and right) integral for $\G$.
In \cite{MasShiShi22}, it has been shown that $\G$ has a left (resp.~right) integral if and only if its even part $\Gev$ does.
Thus, by Sullivan's result \cite{Sul72}, if the characteristic of the base field is zero,
then it follows that algebraic supergroup $\G$ has a left (or right) integral if and only if $\G$ is quasireductive.
Over a field of characteristic zero,
we give a necessary and sufficient condition for a split quasireductive supergroup $\G$ to be unimodular
in terms of its root system $\Deltaa$ (Theorem~\ref{prp:unimo-char=0} and Corollary~\ref{cor:quasi-unimo}).
It is known that being unimodular is equivalent to that the distinguished group-like element is trivial (cf \cite[Chapter~10]{Rad12}).
In Section~\ref{sec:fin-normal},
we investigate properties of the distinguished group-like element of a finite normal super-subgroup of an algebraic supergroup, in general.
In Section~\ref{sec:unimod-Frob-ker},
we study unimodularity of Frobenius kernels $\G_r$ of a split quasireductive supergroup $\G$ defined over a perfect field.
Note that, $\G_r$ always has an integral, since $\G_r$ is finite (i.e., $\O(\G_r)$ is finite-dimensional).
Using the result \cite[Proposition~6.11]{ZubMar16} by Zubkov and Marko,
we get an explicit description of the distinguished group-like element of $\G_r$,
and hence we give a necessary and sufficient condition for all $\G_r$ to be unimodular
in terms of $\Deltaa$ (Theorem~\ref{prp:Frob-unimo} and Corollary~\ref{cor:Frob-unimo}).

In Section~\ref{sec:Ste},
we establish Steinberg's tensor product theorem for a split quasireductive supergroup $\G$ under some natural assumptions.
In Section~\ref{sec:simple-G-supermods},
we review construction of simple $\G$-supermodules $L(\lambda)$ ($\lambda\in\X(T)^\flat$) given in \cite{Shi20}.
In the super-situation, not all of simple $\G$-supermodules are absolutely simple (see Example~\ref{ex:non-abs-irr}).
We show that if $\Deltaa$ does not contain the unit $\mathbf{0}$ of $\X(T)$,
then all simple $\G$-supermodules are absolutely simple (Proposition~\ref{prp:T=Tev-abs-simple}).
In Section~\ref{sec:simple-G_r-supermods},
we construct simple $\G_r$-supermodules $L_r(\lambda)$ ($\lambda\in\X(T)$) and show that
$L_r(\lambda)$ coincides with the $\G_r$-top of the ``highest weight module'' $M_r(\lambda)$ of weight $\lambda$ (Proposition~\ref{prp:L_r=M_r/Rad}).
In Section~\ref{sec:odd-bases},
since the root system $\Deltaa$ of $\G$ is somewhat ill-behaved,
we introduce the notion of a \defnote{\adm base} of $\Deltaa$ (see Definition~\ref{asu:gen}).
We see that typical examples of split quasireductive supergroups have bases of its root systems (Example~\ref{ex:bases}).
The rest of Section~\ref{sec:odd-bases}, we assume that $\Deltaa$ has a \adm base.
The set of all $p^r$-restricted weights for $\G$ is denoted by $\X_r(T)^\flat$,
where $p$ is the characteristic of the base field (see Definition~\ref{def:rest-weight}).
Then we show that the simple $\G$-supermodule $L(\lambda)$ of highest weight $\lambda\in \X_r(\T)^\flat$
coincides with $\hy(\G_r)\acthy L(\lambda)^\lambda$,
where $L(\lambda)^\lambda$ is the $\lambda$-weight space of $L(\lambda)$ (Lemma~\ref{prp:key-lemma}).
Because of the existence of a non absolutely simple $\G$-supermodule,
in Section~\ref{sec:simple-G-G_r-supermods},
we assume that the base field is algebraically closed if $\mathbf{0}\in\Deltaa$.
This assumption is essentially needed to prove Proposition~\ref{prp:key-prp}
which states $L(\lambda)$ is isomorphic to $L_r(\lambda)$ as $\G_r$-supermodules (see Remark~\ref{rem:need}).
Using Proposition~\ref{prp:key-prp},
as in the non super-situation, we can establish Steinberg's tensor product theorem for $\G$ (Theorem~\ref{thm:main} and Corollary~\ref{cor:main}).

\subsection*{Acknowledgements}
The author thanks the anonymous referees for their helpful comments that improved the quality of the manuscript.
The author is supported by JSPS KAKENHI Grant Numbers JP19K14517 and JP22K13905.

\section{Preliminaries}\label{sec:Prelim}
Throughout this paper, $\Bbbk$ denotes a fixed base field of characteristic different from $2$.
The unadorned $\otimes$ is the tensor product over $\Bbbk$.
In this section, we fix notations and collect some known results for Hopf superalgebras and supergroups.

\subsection{Hopf superalgebras}\label{sec:hopf}
Let $\mathz_2=\{\eve,\odd\}$ be the additive group of order two.
The group algebra $\Bbbk\mathz_2$ of $\mathz_2$ over $\Bbbk$ has a unique Hopf algebra structure
and a right $\Bbbk\mathz_2$-comodule is naturally regarded as $\mathz_2$-graded vector space.
The category $\mathcal{C}$ of right $\Bbbk\mathz_2$-comodules forms a monoidal category by the tensor product $\otimes$ over $\Bbbk$.
Namely, the unit object is $\Bbbk=\Bbbk\oplus0$ and
$(V\otimes W)_\epsilon = \bigoplus_{a,b\in\mathz_2,a+b=\epsilon} V_a\otimes W_b$ ($\epsilon\in\mathz_2$)
for right $\Bbbk\mathz_2$-comodules $V$ and $W$.
For a homogeneous element $0\neq v\in V_\eve \cup V_\odd$ of $V\in\mathcal{C}$,
we let $|v|$ denote the degree of $v$, called the \defnote{parity} of $v$.
We say that $V$ is \defnote{purely even} if $V=V_\eve$.
For simplicity,
when we use the symbol $|v|$, we always assume that $v$ is homogeneous.
The following \defnote{supersymmetry} ensures that the category $\mathcal{C}$ is symmetric:
\[
c_{V,W}:V\otimes W\longrightarrow W\otimes V;
\quad
v\otimes w \longmapsto (-1)^{|v||w|}w\otimes v.
\]
An object of $\mathcal{C}$ is called a \defnote{superspace}.

For a superspace $V$, we define $\Pi V\in\mathcal{C}$
by letting $(\Pi V)_\epsilon=V_{\epsilon+\odd}$ for $\epsilon\in\mathz_2$.
For simplicity, we put $\Pi^\eve V:=V$ and $\Pi^\odd V:=\Pi V$.
We denote by $\Hom_\Bbbk(V,W)$ the set of all parity preserving morphism from $V$ to $W$ in $\mathcal{C}$.
We define a superspace $\underline{\Hom}_\Bbbk(V,W)$ by $\underline{\Hom}_\Bbbk(V,W)_\epsilon:=\Hom_\Bbbk(\Pi^\epsilon V,W)$ for $\epsilon\in\mathz_2$.
As usual, we set $\End_\Bbbk(V):=\Hom_\Bbbk(V,V)$ and $\underline{\End}_\Bbbk(V):=\underline{\Hom}_\Bbbk(V,V)$.
For a superspace $V$,
we put $V^*:=\Hom_\Bbbk(V,\Bbbk)$,
called the \defnote{dual superspace} of $V$.

A \defnote{superalgebra} (resp.~\defnote{supercoalgebra/Hopf superalgebra/Lie superalgebra}) is
an algebra (resp.~coalgebra/Hopf algebra/Lie algebra) object in the symmetric monoidal category $\mathcal{C}$.
\begin{ex} \label{ex:Mat}
For positive integers $m$ and $n$,
the set of all matrices of size $m\times n$ whose entries are in $\Bbbk$ is denoted by $\Mat_{m,n}(\Bbbk)$.
Then we can regard $\Mat_{m|n}(\Bbbk):=\Mat_{m+n,m+n}(\Bbbk)$ as a superspace by letting
\begin{eqnarray*}
\Mat_{m|n}(\Bbbk)_\eve
&:=&
\bigset{
\left(\begin{array}{c|c}x_{00} & O \\ \hline O & x_{11}\end{array}\right)}{
x_{00}\in \Mat_{m,m}(\Bbbk),x_{11}\in\Mat_{n,n}(\Bbbk) }, \\
\Mat_{m|n}(\Bbbk)_\odd
&:=&
\bigset{
\left(\begin{array}{c|c}O & x_{01} \\ \hline x_{10} & O\end{array}\right)}{
x_{01}\in \Mat_{m,n}(\Bbbk),x_{10}\in\Mat_{n,m}(\Bbbk) }.
\end{eqnarray*}
The usual matrix multiplication makes $\Mat_{m|n}(\Bbbk)$ into a superalgebra.
For a finite dimensional superspace $V$,
we can identify $\End_\Bbbk(V)$ (resp.~$\underline{\End}_\Bbbk(V)$) with $\Mat_{m|n}(\Bbbk)_\eve$ (resp.~$\Mat_{m|n}(\Bbbk)$),
where $m=\dim(V_\eve)$ and $n=\dim(V_\odd)$.
\qedd
\end{ex}

For a supercoalgebra $C=(C,\Delta_C,\varepsilon_C)$,
the Heyneman-Sweedler notation, such as
$\Delta_C(c)=\sum_c c_{(1)}\otimes c_{(2)}$ and
\[
\sum_c \Delta_C(c_{(1)})\otimes c_{(2)} = \sum_c c_{(1)}\otimes c_{(2)}\otimes c_{(3)} = \sum_c c_{(1)} \otimes \Delta_C(c_{(2)})
\]
is used to express the comultiplication $\Delta_C:C\to C\otimes C$ of $c\in C$.
Note that, $\varepsilon_C(c)=0$ for all $c\in C_\odd$.
For a right $C$-supermodule $V$ with the structure map $\rho_V:V\to V\otimes C$,
we also use the Heyneman-Sweedler notation to express the coaction, such as $\rho_V(v) = \sum_v v_{(0)}\otimes v_{(1)}$ for $v\in V$.
For right $C$-supercomodules $V$ and $W$, the set of all parity preserving left $C$-supercomodule map form $V$ to $W$ is denoted by $\Hom^C(V,W)$,
and define a superspace $\underline{\Hom}^C(V,W)$ so that $\underline{\Hom}^C(V,W)_\epsilon:=\Hom^C(\Pi^\epsilon V,W)$ for $\epsilon \in\mathz_2$.

Let $H$ be a Hopf superalgebra.
By definition, we get
\[
\Delta_H(ab) = \sum_{a,b} (-1)^{|a_{(2)}||b_{(1)}|} a_{(1)}b_{(1)} \otimes a_{(2)}b_{(2)}
\quad\text{for}\quad
a,b\in H,
\]
where $\Delta_H$ is the comultiplication of $H$.
In this paper, the antipode of $H$ is denoted by $\s_H:H\to H$.
\begin{ex} \label{ex:exterior}
For a vector space $V$, the exterior algebra $H=\bigwedge(V)$ of $V$ over $\Bbbk$ naturally becomes a commutative superalgebra.
Moreover, $H$ forms a Hopf superalgebra by letting:
$\Delta_H(v)=v\otimes1+1\otimes v$, $\varepsilon_H(v)=0$ and $\s_H(v)=-v$ for $v\in V$.
Note that, $H$ is cocommutative.
\qedd
\end{ex}
As in the non super-situation,
if a Hopf superalgebra $H$ is commutative or cocommutative,
then the antipode $\s_H:H\to H$ of $H$ satisfies $\s_H^2=\id_H$.
In particular, $\s_H$ is bijective.

\begin{deff} \label{def:group-like}
Let $H$ be a Hopf superalgebra.
A non-zero element $g$ of $H$ is called a \defnote{group-like elements} of $H$ if it satisfies $g\in H_\eve$ and $\Delta_H(g)=g\otimes g$.
The set of all group-like elements of $H$ is denoted by $\gpl(H)$.
\end{deff}
For $g,h\in \gpl(H)$, we see that $gh\in\gpl(H)$, $\varepsilon_H(g)=1$ and $g\s_H(g)=1_H=\s_H(g)g$ (in particular, $g^{-1}=\s_H(g)\in\gpl(H)$),
where $\varepsilon_H$ (resp.~$1_H$) is the counit (resp.~unit element) of $H$.
Thus, $\gpl(H)$ forms an abstract group.

\subsection{Algebraic supergroups}\label{sec:alg-supergroup}
An \defnote{affine supergroup scheme} (\defnote{supergroup}, for short) over $\Bbbk$
is a representable functor $\G$ from the category of commutative superalgebras to the category of groups.
By Yoneda lemma, the representing object $\O(\G)$ of $\G$ forms a commutative Hopf superalgebra.
A supergroup $\G$ is said to be \defnote{algebraic} (resp.~\defnote{finite})
if $\O(\G)$ is finitely generated as a superalgebra (resp.~finite-dimensional).

For a supergroup $\G$,
we define its \defnote{even part} $\Gev$ as the restricted functor of $\G$ from the category of commutative algebras to the category of groups.
If we set $A:=\O(\G)$, then $\Gev$ is an (ordinary) affine group scheme represented by the quotient Hopf algebra $\overline{A}:=A/(A_\odd)$,
where $(A_\odd)$ is the super-ideal of $A$ generated by the odd part $A_\odd$ of $A$.
We denote $\overline{a}$ the image of $a\in A$ by the canonical quotient map $A\twoheadrightarrow \overline{A}$.
If $\G$ is algebraic, then so is $\Gev$.
An algebraic supergroup is said to be \defnote{connected} if its even part is connected, see \cite[Definition~8]{Mas12}.

\begin{ex} \label{ex:Gao}
We list some basic example of algebraic supergroups.
In the following, $R$ denotes a commutative superalgebra.
\begin{enumerate}
\item\label{ex:Gao(1)}
For positive integers $m$ and $n$,
we define the supergroup $\GL(m|n)$, called the \defnote{general linear supergroup}, by
\[
\GL(m|n)(R):=
\bigset{
\left(\begin{array}{c|c}g_{00} & g_{01}\\ \hline g_{10} & g_{11}\end{array}\right)}{
\begin{gathered}
g_{00}\in \GLL_m(R_\eve),g_{01}\in\Mat_{m,n}(R_\odd),\\
g_{10}\in \Mat_{n,m}(R_\odd),g_{11}\in\GLL_n(R_\eve)\\
\end{gathered}
},
\]
where $\GLL_m$ (resp.~$\Mat_{m,n}(R_\odd)$) denotes the general linear group scheme of size $m$
(resp.~the set of all $m\times n$ matrices whose entries are in $R_\odd$).
It is known that $\GL(m|n)$ is algebraic and its even part $\GL(m|n)_\ev$ is isomorphic to $\GLL_m\times \GLL_n$,
see \cite{BruKuj03,MarZub18,Zub06} for example.
\item\label{ex:Gao(2)}
For a positive integer $n$,
the following $\Q(n)$, called the \defnote{queer supergroup}, is a closed super-subgroup supergroup of $\GL(n|n)$.
\[
\Q(n)(R):=
\bigset{
\left(\begin{array}{c|c}g_{00} & g_{01}\\ \hline -g_{01} & g_{00}\end{array}\right)}{
g_{00}\in \GLL_n(R_\eve),g_{01}\in\Mat_{n,n}(R_\odd)}.
\]
The even part $\Q(n)_\ev$ of $\Q(n)$ is isomorphic to $\GLL_n$.
In \cite{Bru06,BruKle03}, modular representation theory of this supergroup is well-studied.
\item\label{ex:Gao(3)}
Let $\bigwedge(z)$ denote the exterior superalgebra of a one-dimensional vector space $\Bbbk z$ (see Example~\ref{ex:exterior}).
The corresponding algebraic supergroup of $\bigwedge(z)$ is denoted by $\Gao$,
called the \defnote{one-dimensional odd unipotent supergroup}, see \cite{GriZub16,MasZub17}.
By definition, we have $\Gao(R)=R_\odd$.
\qedd
\end{enumerate}
\end{ex}

Let $\G$ be a supergroup with representing object $\O(\G)$.
By a \defnote{left $\G$-supermodule} we mean a right $\O(\G)$-supercomodule.
A homomorphism of left $\G$-supermodules is just a right $\O(\G)$-supercomodule map.
For left $\G$-supermodules $V$ and $W$,
we set ${}_{\G}\Hom(V,W):=\Hom^{\O(\G)}(V,W)$ and ${}_{\G}\underline{\Hom}(V,W):=\underline{\Hom}^{\O(\G)}(V,W)$.
\medskip

A non-zero left $\G$-supermodule $L$ is said to be \defnote{simple} if $L$ has no non-trivial $\O(\G)$-super-subcomodule.
The parity change $\Pi$ acts on the set of isomorphism classes of simple left $\G$-supermodules $\Simple(\G)$ as a permutation of order two.
We let $\Simple_\Pi(\G)$ denote the set of $\Pi$-orbits in $\Simple(\G)$.

\subsection{Lie superalgebras and super-hyperalgebras}\label{sec:Lie(G)-hy(G)}
Let $\G$ be an algebraic supergroup.
Set $\m_{\G} := \Ker(\varepsilon_{\O(\G)})$, called the \defnote{augmentation super-ideal} of $\O(\G)$,
where $\varepsilon_{\O(\G)} : \O(\G) \to \Bbbk$ is the counit of $\O(\G)$.
Set $\Lie(\G) := (\m_{\G} / \m_{\G}^2 )^*$.
This naturally forms a Lie superalgebra (see \cite[Proposition~4.2]{MasShi17}),
which we call the \defnote{Lie superalgebra} of $\G$.
Since $\G$ is algebraic, $\Lie(\G)$ is finite-dimensional.
The even part $\Lie(\G)_\eve$ of $\Lie(\G)$ can be identified with the (ordinary) Lie algebra $\Lie(\Gev)$ of $\Gev$.

\begin{ex} \label{ex:Lie}
First, note that, $\Mat_{m|n}(\Bbbk)$ forms a Lie superalgebra
with Lie super-bracket $[X,Y]=XY-(-1)^{|X||Y|}YX$ for $X,Y\in \Mat_{m|n}(\Bbbk)$.
\begin{enumerate}
\item \label{ex:Lie(1)}
The Lie superalgebra of the general linear supergroup $\GL(m|n)$ is isomorphic to $\mathfrak{gl}(m|n):=\Mat_{m|n}(\Bbbk)$.
\item \label{ex:Lie(2)}
As a Lie super-subalgebra of $\mathfrak{gl}(n|n)$,
the Lie superalgebra of the queer supergroup $\Q(n)$ is isomorphic to
\[
\mathfrak{q}(n) :=
\bigset{
\left(\begin{array}{c|c}x_{00} & x_{01}\\ \hline x_{01} & x_{00}\end{array}\right)}{
x_{00},x_{01}\in \Mat_{n,n}(\Bbbk)}.
\]
This Lie superalgebra $\mathfrak{q}(n)$ is the so-called \defnote{queer superalgebra}.
\qedd
\end{enumerate}
\end{ex}

For any positive integer $n$, we regard $(\O(\G)/ \m_{\G}^n)^*$ as a super-subspace of $\O(\G)^*$ through
the dual of the canonical quotient map $\O(\G)\twoheadrightarrow \O(\G)/ \m_{\G}^n$.
As a sub-superspace of $\O(\G)^*$, we set
\[
\hy(\G) := \varinjlim_{n\geq1} (\O(\G) / \m_{\G}^n )^*.
\]
This $\hy(\G)$ forms a super-subalgebra of $\O(\G)^*$.
We call it the \defnote{super-hyperalgebra} of $\G$
(it is sometimes called the \defnote{super-distribution algebra} $\Dist(\G)$ of $\G$).
By definition, we see that $\hy(\G)=\O(\G)^*$ if $\G$ is finite.
Since $\O(\G)/ \m_{\G}^n$ is finite-dimensional for any positive integer $n$,
one sees that $\hy(\G)$ has a structure of a cocommutative Hopf superalgebra such that the restriction
\[
\langle\;,\;\rangle \,: \, \hy(\G) \times \O(\G) \longrightarrow \Bbbk
\]
of the canonical pairing $\O(\G)^*\times \O(\G)\to \Bbbk$ is a Hopf pairing, see \cite[Lemma~5.1]{MasShi17}.
If $\G$ is connected, then the pairing induces an injection $\O(\G)\hookrightarrow \hy(\G)^*$.
In particular, the unit element of $\hy(\G)$ is given by the restriction of the counit $\varepsilon_{\O(\G)}:\O(\G)\to\Bbbk$ of $\O(\G)$.
\medskip

Masuoka showed the following {\it$\otimes$-split type theorem} for $\O(\G)$ and $\hy(\G)$,
see \cite[Theorem~4.5]{Mas05} for detail (see also \cite[Proposition~22]{Mas12}).
\begin{thm} \label{thm:tensor-split}
For an algebraic supergroup $\G$,
there exists a counit (resp.~unit) preserving isomorphism
\[
\O(\G) \cong \O(\Gev) \otimes \bigwedge (\Lie(\G)_\odd^*)
\quad
\text{(resp.~}\hy(\G) \cong \hy(\Gev)\otimes \bigwedge(\Lie(\G)_\odd)\text{)}
\]
of (left $\O(\Gev)$-comodule) superalgebras (resp.~(left $\hy(\Gev)$-module) supercoalgebras).
\end{thm}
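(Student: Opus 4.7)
Proof plan:

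The plan is to prove the $\O(\G)$-statement in detail and deduce the $\hy(\G)$-statement by a dual argument. Both assertions are $\otimes$-split decompositions of a commutative (resp.\ cocommutative) Hopf superalgebra with respect to its canonical even quotient (resp.\ even Hopf sub-superalgebra), and both rest on super-analogues of the fundamental theorem of Hopf modules combined with the specific structure of the odd ideal in a supercommutative algebra.

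Set $A:=\O(\G)$, $B:=\O(\Gev)$, and let $\pi:A\twoheadrightarrow B$ be the canonical quotient modulo the superideal $I:=A\cdot A_\odd$. A key preliminary observation is that $I$ is nilpotent: since $\G$ is algebraic, $A$ is finitely generated, so $A_\odd$ is finitely generated as an $A_\eve$-module, say by homogeneous elements $z_1,\ldots,z_m$; the supercommutativity relations $z_iz_j=-z_jz_i$ and $z_i^2=0$ then force $I^{m+1}=0$. Setting $V:=\Lie(\G)_\odd^*=(\m_\G)_\odd/(\m_\G)_\eve(\m_\G)_\odd$, I would lift a homogeneous basis of $V$ to elements $y_1,\ldots,y_n\in(\m_\G)_\odd$ and obtain an injective superalgebra map $\iota:\bigwedge(V)\to A$, with injectivity checked by comparing with the associated graded $\mathrm{gr}_I A$.

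The central step is to show that the subalgebra of left $B$-coinvariants
\[
A^{\mathrm{co}B}:=\{a\in A\mid(\pi\otimes\id)\Delta_A(a)=1\otimes a\}
\]
equals $\iota(\bigwedge(V))$, and that the multiplication map $\Phi:B\otimes\iota(\bigwedge(V))\to A$, induced by a suitably chosen $B$-comodule section of $\pi$, is an isomorphism of left $B$-comodule superalgebras. The inclusion $\iota(\bigwedge(V))\subseteq A^{\mathrm{co}B}$ is immediate since each $y_i$ is sent to zero by $\pi$; the reverse inclusion and the bijectivity of $\Phi$ follow from a super-version of the fundamental theorem of Hopf modules, applicable because $I$ is nilpotent, together with a dimension count against $\mathrm{gr}_I A\cong B\otimes\bigwedge(V)$. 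Counit-preservation of $\Phi$ is then automatic from the construction.

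For $\hy(\G)$, the argument is entirely dual: $\hy(\Gev)$ is a Hopf sub-superalgebra of $\hy(\G)$; the odd primitives $\Lie(\G)_\odd\subset\hy(\G)$ generate, by cocommutativity, a sub-supercoalgebra isomorphic to $\bigwedge(\Lie(\G)_\odd)$ via a Milnor--Moore-type PBW argument; and the dual structure theorem yields the unit-preserving isomorphism of left $\hy(\Gev)$-module supercoalgebras. The main obstacle throughout is setting up the Hopf-module structure theorem in the super setting and choosing the section (resp.\ retraction) so as to simultaneously respect both the algebra and the comodule (resp.\ coalgebra and module) structures with the correct sign conventions; the nilpotence of $I$ (resp.\ the local finiteness of $\hy(\G)$) provides the finiteness needed for these constructions.
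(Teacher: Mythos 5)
The paper does not prove Theorem~\ref{thm:tensor-split}; it is quoted from Masuoka \cite[Theorem~4.5]{Mas05} (see also \cite[Proposition~22]{Mas12}). Your overall strategy --- nilpotency of the odd ideal, coinvariants, a super Hopf-module theorem --- is the same family of ideas as in those references, but as written the plan has a genuine gap at exactly the point where the theorem is hard. The injectivity of $\iota:\bigwedge(V)\to A$, equivalently the isomorphism $\mathrm{gr}_I A\cong B\otimes\bigwedge(V)$ that you propose to use for a ``dimension count'', is not a routine verification: it \emph{is} the theorem, in associated-graded form. For a general finitely generated supercommutative superalgebra it is false --- take $A=\bigwedge(y_1,y_2)/(y_1y_2)$, where $V$ is two-dimensional but $\dim A=3<4=\dim\bigwedge(V)$ --- so any proof must use the Hopf structure in an essential way, and invoking $\mathrm{gr}_I A\cong B\otimes\bigwedge(V)$ to establish injectivity of $\iota$ and bijectivity of $\Phi$ is circular.

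Two further steps are asserted rather than supplied. First, $\iota(\bigwedge(V))\subseteq A^{\mathrm{co}B}$ is not immediate from $\pi(y_i)=0$: writing $\Delta(y_i)=y_i\otimes1+1\otimes y_i+\sum_j a_j\otimes b_j$ with $a_j,b_j\in\m_{\G}$, the cross-terms with $a_j$ even and $b_j$ odd survive $\pi\otimes\id$, so the lifts must be \emph{chosen} coinvariant, which itself requires an argument (the twisted semidirect products of Example~\ref{ex:root-sys}\eqref{ex:root-sys(4)} show that an ``obvious'' odd coordinate need not be coinvariant on both sides when the group-like $g$ is nontrivial). Second, even granting that $A$ is coflat over $B$ (a Takeuchi-type result, not free in the super setting), the fundamental theorem of Hopf modules yields an isomorphism of comodules, not of comodule \emph{algebras}; producing an algebra section of $\pi$ that is simultaneously a $B$-comodule map is precisely where Masuoka's argument does its real work, via the nonzero integral of the finite-dimensional Hopf superalgebra $\bigwedge(V)$ and a cleftness/total-integral argument. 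On the dual side, beware that the subalgebra of $\hy(\G)$ generated by $\Lie(\G)_\odd$ is in general \emph{not} an exterior algebra (e.g.\ $K^2=\tfrac12[K,K]\neq0$ for odd $K\in\h_\odd$ in $\q(n)$); the copy of $\bigwedge(\Lie(\G)_\odd)$ is only a sub-supercoalgebra spanned by ordered monomials, and the linear independence of those monomials over $\hy(\Gev)$ is a PBW theorem that an appeal to Milnor--Moore does not provide in positive characteristic.
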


In the following, let $\s:\hy(\G)\to\hy(\G)$ denote the antipode of $\hy(\G)$ for simplicity.
Note that, $\s$ is the restriction of the dual $\s_{\O(\G)}^*$ of the antipode $\s_{\O(\G)}:\O(\G)\to \O(\G)$ of $\O(\G)$.
We set
\begin{equation} \label{eq:super-bracket}
[u,w] := \sum_{u,w} (-1)^{|u_{(2)}| |w_{(1)}|} u_{(1)} w_{(1)} \s(u_{(2)}) \s(w_{(2)}),
\quad u,w\in\hy(\G),
\end{equation}
called the \defnote{super-bracket} of $\hy(\G)$.
An element $X\in\hy(\G)$ is said to be \defnote{primitive} if the comultiplication of $X$ is given by $X\otimes 1 + 1\otimes X$,
where $1$ denotes the unit element of $\hy(\G)$.
For primitive elements $X,Y$ of $\hy(\G)$,
we have $[X,Y] = XY - (-1)^{|X| |Y|} YX$.
If we regard $\Lie(\G)$ as a super-subspace of $\hy(\G)$,
then this shows that $\Lie(\G)$ coincides with the set of all primitive elements in $\hy(\G)$.
\medskip

For a left $\G$-supermodule $V$,
we regard $V$ as a left $\hy(\G)$-supermodule by letting
\begin{equation} \label{eq:acthy}
u \acthy v := \sum_v (-1)^{|v_{(0)}| |u|}\,v_{(0)}\, \langle u, v_{(1)} \rangle,
\end{equation}
where $u\in\hy(\G)$, $v\in V$.
Suppose that $V$ is finite-dimensional.
Then the dual superspace $V^*$ of $V$ forms a right $\O(\G)$-supercomodule by using the antipode of $\O(\G)$.
The induced left $\hy(\G)$-supermodule structure on $V^*$ satisfies the following equation.
\begin{equation} \label{eq:acthydual}
(u \acthydual f) (v) = (-1)^{|u| |f|} f (\s(u) \acthy v),
\end{equation}
where $v\in V$, $f\in V^*$ and $u\in\hy(\G)$.

\subsection{Normal super-subgroups}\label{sec:normal}
Let $\G$ be an algebraic supergroup, in general.
Set $A:=\O(\G)$.
A subfunctor $\N$ of $\G$ is called a \defnote{closed super-subgroup}
if $\N$ is affine and the corresponding Hopf superalgebra $\O(\N)$ is isomorphic to
the quotient $A/I$ for some Hopf super-ideal $I$ of $A$.
This $\N$ is said to be \defnote{normal} if
(as an abstract group) $\N(R)$ is a normal subgroup of $\G(R)$ for all commutative superalgebra $R$.
The condition is equivalent to saying that the canonical quotient map $A\to A/I$ is \defnote{conormal} (see \cite[Definition~5.7]{Mas05}),
that is, $\coad_A(I)\subset A\otimes I$.
Here, $\coad_A$ denotes the (left) \defnote{coadjoint coaction} on $A$ given by
\[
\coad_{A} : A \longrightarrow A\otimes A;
\quad
a \longmapsto \sum_a (-1)^{|a_{(2)}| |a_{(3)}|} a_{(1)} \s_A(a_{(3)}) \otimes a_{(2)},
\]
where $\s_A$ is the antipode of $A$.
By definition, the even part $\N_\ev$ of a normal super-subgroup $\N$ of $\G$ is a normal subgroup of $\Gev$.

As the dual notion of (left) coadjoint coaction on $A$,
we define
\begin{equation} \label{eq:triact}
u\triact w := \sum_{u} (-1)^{|w| |u_{(2)}|} u_{(1)} w \s(u_{(2)}),
\quad u,w\in\hy(\G),
\end{equation}
called the (left) \defnote{adjoint action} on $\hy(\G)$,
where $\s$ is the antipode of $\hy(\G)$.
Note that, the super-bracket \eqref{eq:super-bracket} can be rewritten as $[u,w]=\sum_w (u\triact w_{(1)})\s(w_{(2)})$.

\begin{lemm} \label{prp:triact-prp}
For $u,u',w,w'\in\hy(\G)$,
we have
(1) $(uu')\triact w = u\triact(u'\triact w)$,
(2) $1\triact w = w$,
(3) $u\triact (ww') = \sum_{u}(-1)^{|u_{(2)}| |w|} (u_{(1)}\triact w)(u_{(2)}\triact w')$
and
(4) $u\triact 1 = \varepsilon(u) = u(1)$.
Here, $\varepsilon$ (resp.~$1$) denotes the counit (resp.~unit) of $\hy(\G)$.
\end{lemm}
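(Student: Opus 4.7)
The strategy is direct computation from the defining formula
\[
u\triact w = \sum_u (-1)^{|w||u_{(2)}|}\, u_{(1)} w\, \s(u_{(2)}),
\]
using only the Hopf superalgebra axioms for $\hy(\G)$: coassociativity, the antipode identities $\sum u_{(1)}\s(u_{(2)}) = \varepsilon(u)1 = \sum \s(u_{(1)}) u_{(2)}$, the super anti-multiplicativity $\s(uu') = (-1)^{|u||u'|}\s(u')\s(u)$, and the super comultiplication rule $\Delta(uu') = \sum (-1)^{|u_{(2)}||u'_{(1)}|} u_{(1)}u'_{(1)}\otimes u_{(2)}u'_{(2)}$. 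Each assertion will reduce to a term-by-term check once both sides are expanded in Sweedler notation.

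The easy parts come first. For (2), $\Delta(1) = 1\otimes 1$ and $\s(1)=1$ give $1\triact w = 1\cdot w\cdot 1 = w$. For (4), taking $w=1$ the sign is trivial since $|1|=\eve$, hence $u\triact 1 = \sum_u u_{(1)}\s(u_{(2)}) = \varepsilon(u)\,1$; under the inclusion $\hy(\G)\subset \O(\G)^*$ the counit of $\hy(\G)$ is evaluation at $1_{\O(\G)}$, so $\varepsilon(u)=u(1)$.

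For (3), I expand the right-hand side by applying $\Delta$ once more inside each Sweedler factor and invoke coassociativity to work with $\Delta^3(u)=\sum u_{(1)}\otimes u_{(2)}\otimes u_{(3)}\otimes u_{(4)}$. This produces a sum of terms of the form $u_{(1)}\, w\, \s(u_{(2)}) u_{(3)}\, w'\, \s(u_{(4)})$ with an overall Koszul sign collected from the two nested applications of the adjoint action. The middle block collapses via $\sum \s(u_{(2)})u_{(3)} = \varepsilon(u_{(2)})\,1$, and the counit then absorbs one Sweedler index; after renaming, the result is exactly $\sum_u (-1)^{(|w|+|w'|)|u_{(2)}|} u_{(1)} ww'\,\s(u_{(2)}) = u\triact(ww')$.

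For (1), I expand both sides. The left side uses $\Delta(uu')$, which introduces $(-1)^{|u_{(2)}||u'_{(1)}|}$, and then $\s(u_{(2)}u'_{(2)}) = (-1)^{|u_{(2)}||u'_{(2)}|}\s(u'_{(2)})\s(u_{(2)})$ from super anti-multiplicativity, yielding terms $u_{(1)}u'_{(1)}\, w\, \s(u'_{(2)})\s(u_{(2)})$. The right side, written as $u\triact\bigl(\sum_{u'}(-1)^{|w||u'_{(2)}|} u'_{(1)} w\, \s(u'_{(2)})\bigr)$, produces the same monomial after the outer action is expanded. The main obstacle is therefore not algebraic but bookkeeping: one must verify that the combined Koszul sign on the right, namely the nested $(-1)^{|u_{(2)}||u'_{(1)} w \s(u'_{(2)})|}(-1)^{|w||u'_{(2)}|}$, reduces to the sign $(-1)^{|w||(uu')_{(2)}|}\cdot(-1)^{|u_{(2)}||u'_{(1)}|}\cdot(-1)^{|u_{(2)}||u'_{(2)}|}$ on the left, which it does after using $|(uu')_{(2)}| = |u_{(2)}|+|u'_{(2)}|$ and the parity of each Sweedler component. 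With signs in hand, every part is immediate.
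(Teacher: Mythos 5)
Your proof is correct and follows essentially the same route as the paper: direct expansion in Sweedler notation using coassociativity, the antipode identities, and super (anti)multiplicativity, with the middle block in (3) collapsing via $\sum \s(u_{(2)})u_{(3)} = \varepsilon(u_{(2)})1$. The paper writes out only the computation for (3) and declares (1), (2), (4) straightforward, so your sign bookkeeping for (1) is just a more explicit version of the same argument.
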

\begin{proof}
It is straightforward to check (1), (2) and (4).
The following direct computation shows (3):
\begin{eqnarray*}
&& \sum_{u}(-1)^{|u_{(2)}| |w|} (u_{(1)}\triact w)(u_{(2)}\triact w') \\
&=& \sum_{u}(-1)^{|u_{(4)}| |w|+|u_{(2)}||w| + |u_{(4)}||w'|} u_{(1)} w \s(u_{(2)}) u_{(3)} w'\s(u_{(4)}) \\
&=& \sum_{u}(-1)^{|u_{(2)}| |w|+ |u_{(2)}||w'|} u_{(1)} w w'\s(u_{(2)}) = u\triact (ww').
\end{eqnarray*}
The second equation follows from the fact that $\varepsilon(x)=0$ for $|x|=1$ in general.
\end{proof}

Let $\N$ be a normal super-subgroup of $\G$.
By definition, the left coadjoint coaction on $\O(\G)$ induces a left $\O(\G)$-supercomodule structure on $\m_\N/\m_\N^2$,
where $\m_\N$ is the augmentation super-ideal of $\O(\N)$.
Since $\m_\N/\m_\N^2$ is finite-dimensional, its linear dual $\Lie(\N)$ has a left $\G$-supermodule structure.
Thus by \eqref{eq:acthydual}, we get a left $\hy(\G)$-supermodule structure on $\Lie(\N)=(\m_\N/\m_\N^2)^*$.

We regard $\Lie(\N)$ as a super-subspace of $\hy(\G)$ by the inclusion $\hy(\N)\subset \hy(\G)$.
Then one sees that the action of $\hy(\G)$ on $\Lie(\N)$ defined above is given by the adjoint action $\triact$, see \eqref{eq:triact}.
In particular, by restricting the action of $\hy(\G)$ to $\Lie(\G)$,
we see that $\Lie(\N)$ is a \defnote{Lie super-ideal} of $\Lie(\G)$,
that is,
$[X,N] \,(=X\triact N)\, \in\Lie(\N)$ for all $X\in\Lie(\G)$ and $N\in\Lie(\N)$.

A Hopf super-subalgebra $H$ of $\hy(\G)$ is said to be \defnote{normal} (see \cite[Theorem~3.10]{Mas05})
if $H$ is $\hy(\G)$-stable under the adjoint action $\triact$,
that is, $u\triact h \in H$ for all $u\in\hy(\G)$ and $h\in H$.
\begin{prop} \label{prp:hy(normal)}
If $\N$ is a normal super-subgroup of $\G$,
then $\hy(\N)\subset \hy(\G)$ is normal.
In particular, $\hy(\N)$ is closed under super-bracket of $\hy(\G)$,
that is, $[u,x]\in\hy(\N)$ for all $u\in\hy(\G)$ and $x\in\hy(\N)$.
\end{prop}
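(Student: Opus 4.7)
Set $A := \O(\G)$ and $I := \Ker(A \twoheadrightarrow \O(\N))$, which is a Hopf super-ideal of $A$. The approach is to identify $\hy(\N)$ inside $\hy(\G)$ as the annihilator of $I$ under the Hopf pairing and then exploit the duality between the adjoint action $\triact$ on $\hy(\G)$ and the coadjoint coaction $\coad_A$ on $A$.

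First I would verify the identification
\[
\hy(\N) = \bigset{x \in \hy(\G)}{\langle x, I\rangle = 0}.
\]
This follows from $\m_\N = \m_\G/I$ and hence $\m_\N^n = (\m_\G^n + I)/I$: a functional $x \in \hy(\G)$, which by definition vanishes on some $\m_\G^n$, lies in $\hy(\N)$ precisely when it descends to a functional on $A/(\m_\G^n + I) \cong \O(\N)/\m_\N^n$, i.e., when it also vanishes on $I$.

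Next, for $u,x \in \hy(\G)$ and $a \in A$ I would expand
\[
\langle u \triact x, a\rangle = \sum_u (-1)^{|x||u_{(2)}|} \langle u_{(1)} x \s(u_{(2)}), a\rangle
\]
using that the pairing $\hy(\G) \times A \to \Bbbk$ is a Hopf pairing and that $\langle \s(\,\cdot\,), \,\cdot\,\rangle = \langle \,\cdot\,, \s_A(\,\cdot\,)\rangle$. After the routine Koszul-sign bookkeeping this yields
\[
\langle u \triact x, a\rangle = \sum (-1)^{\epsilon}\, \langle u, a_{(1)} \s_A(a_{(3)})\rangle\, \langle x, a_{(2)}\rangle,
\]
which is, up to sign, the pairing of $u \otimes x$ against $\coad_A(a) = \sum (-1)^{|a_{(2)}||a_{(3)}|} a_{(1)} \s_A(a_{(3)}) \otimes a_{(2)}$. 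In particular $\langle u \triact x, a\rangle$ depends on $x$ only through its values on the second tensor factor of $\coad_A(a)$.

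Since $\N$ is normal, the quotient $A \twoheadrightarrow A/I$ is conormal, i.e., $\coad_A(I) \subset A \otimes I$. For $a \in I$ and $x \in \hy(\N)$, each occurrence of $a_{(2)}$ in the expansion above then lies in $I$, so $\langle x, a_{(2)}\rangle = 0$; hence $\langle u \triact x, a\rangle = 0$ for every $a \in I$, and the annihilator description of $\hy(\N)$ gives $u \triact x \in \hy(\N)$. This proves normality of $\hy(\N) \subset \hy(\G)$. For the ``in particular'' statement, the rewriting $[u, x] = \sum_x (u \triact x_{(1)}) \s(x_{(2)})$ noted just after \eqref{eq:triact} reduces the claim to what has just been shown, combined with the closure of the Hopf super-subalgebra $\hy(\N)$ under comultiplication, antipode and multiplication. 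The only mildly delicate step is the Koszul-sign bookkeeping in the pairing computation; conceptually the argument is pure duality.
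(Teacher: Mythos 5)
Your proof is correct, but it follows a genuinely different route from the paper. The paper invokes Masuoka's criterion \cite[Proposition~5.5(2)]{Mas12}, which characterizes normality of a Hopf super-subalgebra of $\hy(\G)$ by four conditions split along the even/odd decomposition (normality of $\hy(\N_\ev)$ in $\hy(\Gevv)$, $\hy(\Gevv)$-stability of $\Lie(\N)_\odd$, a bracket condition $[\Lie(\N)_\odd,\Lie(\G)_\odd]\subset\hy(\N_\ev)$, and a right-adjoint-action condition), and then verifies each one using Takeuchi's classical result for $\N_\ev\subset\Gevv$ and the $\Gevv$-stability of the odd parts. Your argument instead works uniformly: you identify $\hy(\N)$ as the annihilator of the Hopf super-ideal $I$ inside $\hy(\G)$ (which is correct, via $\m_\N^n=(\m_\G^n+I)/I$), observe that $\triact$ is dual to $\coad_A$ under the Hopf pairing, and transfer the conormality $\coad_A(I)\subset A\otimes I$ directly. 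This is cleaner and more conceptual --- no case analysis on parity, and the ``in particular'' clause follows formally from $[u,w]=\sum_w(u\triact w_{(1)})\s(w_{(2)})$ together with the fact that $\hy(\N)$ is a Hopf super-subalgebra. What the paper's route buys is that it avoids the Koszul-sign verification entirely by outsourcing the duality to an established structural theorem; what yours buys is independence from the Harish-Chandra-pair machinery. The one step you should make explicit if writing this up is the sign-compatible identity $\langle u\triact x,a\rangle=\pm(u\otimes x)(\coad_A(a))$, where $u\otimes x$ acts on $A\otimes A$ with the usual Koszul convention: since homogeneous decompositions of elements of $A\otimes I$ can be chosen with second factors in $I$, and the signs depend only on parities of homogeneous components, the vanishing for $x\in\hy(\N)$ and $a\in I$ is indeed representation-independent, so the argument goes through.
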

\begin{proof}
By \cite[Proposition~5.5(2)]{Mas12},
$\hy(\N)$ is normal if and only if the following four conditions are satisfied:
(i) $\hy(\N_\ev)\subset \hy(\G_\ev)$ is normal,
(ii) $\Lie(\N)_\odd$ is $\hy(\Gev)$-stable under the adjoint action $\triact$,
(iii) $[\Lie(\N)_\odd,\Lie(\G)_\odd]\subset \hy(\N_\ev)$
and
(iv) $X\triactt u - \varepsilon(u)X \in\Lie(\N)_\odd$ for all $X\in\Lie(\G)_\odd,u\in\hy(\N_\ev)$,
where
$X\triactt u := \sum_{u} \s(u_{(1)}) X u_{(2)}$
is the right adjoint action of $\hy(\N_\ev)$ on $\Lie(\G)_\odd$.

Since $\N_\ev$ is a normal subgroup of $\Gev$,
the condition (i) is clear by \cite[Corollary~3.4.15]{Tak74}.
By the construction, $\Lie(\N)_\odd$ is $\Gev$-stable, and hence the condition (ii) follows.
Since $\Lie(\N)$ is a Lie super-ideal of $\Lie(\G)$,
the condition (iii) is trivial.
Note that, in our case, the value of the counit $\varepsilon(u)$ is zero unless $u\in \Bbbk 1=\{c1\in\hy(\G)\mid c\in\Bbbk\}$.
Thus, to show the condition (iv), it is enough to show that $X\triactt u\in \Lie(\N)_\odd$ for all $X\in\Lie(\G)_\odd$ and $u\in\hy(\N_\ev)$.
Since $\hy(\N_\ev)$ is cocommutative,
we have $\s^2=\id$ and
\[
X\triactt \s(u)
= \sum_{u} u_{(2)} X \s(u_{(1)})
= \sum_{u} u_{(1)} X \s(u_{(2)})
= u\triact X.
\]
On the other hand,
by the construction,
$\Lie(\G)_\odd$ is $\Gev$-stable, and hence $\N_\ev$-stable.
In particular, $\Lie(\G)_\odd$ is $\hy(\N_\ev)$-stable under the adjoint action $\triact$.
Thus, the condition (iv) easily follows from the above formula.
\end{proof}

\subsection{Characters}\label{sec:char-cent}
Let $\Gm:=\GLL_1$ denote the one dimensional multiplicative group (scheme).
A \defnote{character} of a supergroup $\G$ is a group homomorphism from $\G$ to $\Gm$.
The set of all characters
\[
\X(\G):=\Hom(\G,\Gm)
\]
of $\G$, called the \defnote{character group} of $\G$, naturally forms an abstract group.
For $\chi\in\X(\G)$, we have a group homomorphism $\chi:\G(\O(\G))\to \Gm(\O(\G))$,
and hence we have a Hopf algebra homomorphism $\chi(\id_{\O(\G)}):\O(\Gm)\to \O(\G)$ by the Yoneda lemma.
If we realize $\O(\Gm)$ as the Laurent polynomial algebra $\Bbbk[X^{\pm1}]$ in the variable $X$ with coefficients in $\Bbbk$,
then it is easy to see that $\chi(\id_{\O(\G)})(X)\in \O(\G)_\eve$ is a group-like element.
In this way, we have an isomorphism
$\X(\G)\cong \gpl\big(\O(\G)\big)$
of abstract groups.

For each $\chi\in\X(\G)$,
we get the one-dimensional left $\G$-supermodule $\Bbbk^\chi$ so that
$\Bbbk^\chi=\Bbbk$ as a purely even superspace and the right $\O(\G)$-supercomodule structure is given by
\[
\Bbbk^\chi \longrightarrow \Bbbk^\chi\otimes \O(\G);
\quad
v\longmapsto v\otimes \chi.
\]
In other words, $g.v=\chi(g)v$ for all commutative superalgebra $R$ and $g\in \G(R), v\in \Bbbk^\chi$.
If there is no confusion, we sometimes simply denote $\Bbbk^\chi$ by $\chi$.
In this way, we get a one-to-one correspondence between $\X(\G)\cong \gpl(\O(\G))$ and the set of all equivalence classes of
one-dimensional (simple) left $\G$-supermodules under the parity change $\Pi$.
\begin{lemm}\label{prp:character}
The map $\X(\G)\to \X(\Gev);\;\chi\mapsto \chi|_{\Gev}$ is injective,
where $\chi|_{\Gev}$ denotes the restriction of $\chi:\G\to\Gm$ to $\Gev$.
\end{lemm}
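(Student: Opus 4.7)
Under the identifications $\X(\G)\cong\gpl(\O(\G))$ and $\X(\Gev)\cong\gpl(\O(\Gev))$ recalled just above, the restriction map is the map induced on group-like elements by the quotient Hopf superalgebra map $\pi\colon\O(\G)\twoheadrightarrow\O(\G)/(\O(\G)_\odd)=\O(\Gev)$. Hence the plan is to prove: if $\chi\in\gpl(\O(\G))$ satisfies $\pi(\chi)=1$, then $\chi=1$. The idea is to transfer this equality into a computation on $\hy(\G)$ using the one-dimensional, purely even left $\G$-supermodule $\Bbbk^\chi$ introduced above.

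First, since $\pi(\chi)=1$, the restriction of $\Bbbk^\chi$ to $\Gev$ is the trivial module, so the Hopf super-subalgebra $\hy(\Gev)\subset\hy(\G)$ acts on $\Bbbk^\chi$ through the counit $\varepsilon$. Next, because $\Bbbk^\chi$ is purely even, every $X\in\Lie(\G)_\odd$ acts on $\Bbbk^\chi$ as an odd endomorphism of a purely even space, hence as zero; consequently the whole subsuperalgebra $\bigwedge(\Lie(\G)_\odd)\subset\hy(\G)$ also acts through $\varepsilon$. Applying Masuoka's tensor-split isomorphism $\hy(\G)\cong\hy(\Gev)\otimes\bigwedge(\Lie(\G)_\odd)$ from Theorem~\ref{thm:tensor-split}, one concludes that the whole $\hy(\G)$ acts on $\Bbbk^\chi$ through $\varepsilon$.

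Combining this with formula~\eqref{eq:acthy}, which for the one-dimensional comodule $\Bbbk^\chi$ reads $u\acthy 1_\chi=\langle u,\chi\rangle\,1_\chi$, yields $\langle u,\chi\rangle=\varepsilon(u)=\langle u,1\rangle$ for every $u\in\hy(\G)$, so $\chi$ and $1$ coincide as linear functionals on $\hy(\G)$. Since $\G$ is connected in the setting of interest, the pairing-induced injection $\O(\G)\hookrightarrow\hy(\G)^*$ recorded in Section~\ref{sec:Lie(G)-hy(G)} forces $\chi=1$, finishing the proof. The principal work is the reduction to the $\hy(\G)$-computation; once there, the tensor-split of $\hy(\G)$ does the heavy lifting, and the only delicate point is the faithfulness of the pairing on $\O(\G)$, which is precisely the connectedness hypothesis (for a possibly disconnected $\G$ one would first reduce to the identity component by observing that $\Gev$ and $\G$ share the same component group, so $\chi|_{\Gev}=1$ already kills $\chi$ on the non-identity components).
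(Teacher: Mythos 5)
Your argument is correct in substance, but it is a genuinely different proof from the one in the paper, and it proves a slightly less general statement. The paper stays entirely on the coordinate-algebra side: it observes that the group algebra $\Bbbk\Gamma$ of $\Gamma=\gpl(\O(\G))$ is a \emph{purely even} Hopf sub-superalgebra of $A=\O(\G)$, so that $\overline{\Bbbk\Gamma}=\Bbbk\Gamma$, and then invokes Masuoka's result \cite[Proposition~4.6(3)]{Mas05} that the inclusion of a Hopf sub-superalgebra induces an injection $\overline{\Bbbk\Gamma}\hookrightarrow\overline{A}$; distinct group-likes therefore stay distinct in $\O(\Gev)$. That argument needs no connectedness, no algebraicity, and no hyperalgebra. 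Your route dualizes to $\hy(\G)$ and uses two extra hypotheses that happen to hold in every application in the paper ($\G$ quasireductive, or $\G_r$, both connected and algebraic) but not in the lemma as stated: the tensor-split of $\hy(\G)$ and the injectivity of $\O(\G)\hookrightarrow\hy(\G)^*$, the latter being exactly where connectedness enters; your closing sketch for disconnected $\G$ is too vague to count as a proof. Two small points to tighten if you keep your version: (i) Theorem~\ref{thm:tensor-split} gives $\hy(\G)\cong\hy(\Gev)\otimes\bigwedge(\Lie(\G)_\odd)$ only as supercoalgebras and left $\hy(\Gev)$-modules, so $\bigwedge(\Lie(\G)_\odd)$ is not literally a sub-superalgebra of $\hy(\G)$; what you actually need, and what the theorem does give, is that $\hy(\G)$ is generated as a superalgebra by $\hy(\Gev)$ together with $\Lie(\G)_\odd$, and that ``acting through $\varepsilon$'' is preserved under products of such generators. (ii) The vanishing of the action of odd $X$ is most cleanly seen from the parity of the Hopf pairing: $\langle X,\chi\rangle=0$ because $\chi$ is even and $X$ is odd. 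With those adjustments your proof is a valid alternative for the connected algebraic case; the paper's proof is the one to prefer for the lemma in its stated generality.
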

\begin{proof}
Set $A:=\O(\G)$, $\Gamma:=\gpl(A)$ and $\Gamma':=\gpl(\overline{A})$.
Recall that, $\overline{A}=A/(A_\odd)=\O(\Gev)$.
Then the group algebra $\Bbbk\Gamma$ is a Hopf sub-superalgebra of $A$.
By \cite[Proposition~4.6(3)]{Mas05}, the inclusion $\Bbbk\Gamma\subset A$ induces an injection $\overline{\Bbbk\Gamma} \hookrightarrow \overline{A}$.
On the other hand, the quotient map $A\twoheadrightarrow \overline{A}$ induces a Hopf algebra homomorphism $\Bbbk\Gamma\to \Bbbk\Gamma'$.
Since $\Bbbk\Gamma =\overline{\Bbbk\Gamma}$, we see that $\Bbbk\Gamma \to \Bbbk\Gamma';\; a\mapsto \overline{a}$ is injective.
This proves the claim.
\end{proof}

\begin{ex} \label{ex:Ber}
We consider the case $\G=\GL(m|n)$.
Recall that $\Gev=\GLL_m\times \GLL_m$.
Let $R$ be a fixed superalgebra.
For
\[
g=\left(\begin{array}{c|c}g_{00} & g_{01} \\ \hline g_{10} & g_{11}\end{array}\right) \in \GL(m|n)(R),
\]
we set
$\det_{\eve}(g):=\det(g_{00})$, $\det_{\odd}(g):=\det(g_{11})$ and
\[
\Ber(g):=\det(g_{00}-g_{01}g_{11}^{-1}g_{10})\det(g_{11})^{-1}.
\]
This $\Ber(g)$ is called the \defnote{Berezinian} of $g$.
Then it is easy to see that $\det_{\epsilon}$ and $\Ber$ are in $\X(\G)$ for $\epsilon\in\mathz_2$.
Note that, $\Ber|_{\Gev} = (\det_{\eve}|_{\Gev}) \cdot (\det_{\odd}|_{\Gev})^{-1}$.
In \cite[Lemma~13.5]{Zub16}, Zubkov showed that the character group $\X(\G)$ of $\G=\GL(m|n)$ is generated by $\{\Ber,\det_{\odd}^{\tilde p}\}$,
where $\tilde p := \mathsf{char}(\Bbbk)\,(\neq2)$.
\qedd
\end{ex}

\section{Split Quasireductive Supergroups} \label{sec:Quasired}

\subsection{Split quasireductive supergroups} \label{sec:quasired}
Recall that,
a split and connected reductive $\mathz$-group $G_\mathz$ is a connected algebraic group (scheme) over $\mathz$ having a split maximal torus $T_\mathz$
such that the pair $(G_\mathz, T_\mathz)$ corresponds to a root datum (cf.~\cite{SGA3}).
See also \cite[Part~II, Chapter~1]{Jan03} and \cite[\S5.2]{Mil17}, for example.
It is known that $\mathcal{O}(G_\mathz)$ is free as a $\mathz$-module and $G_\mathz$ is infinitesimally flat.

\begin{deff}[{\cite[Definition~3.1]{Shi20}}]\label{def:Quasired}
An algebraic supergroup $\G_{\mathz}$ defined over $\mathz$ is said to be
\defnote{split quasireductive} if
its even part of $\G_{\mathz}$ is a split and connected reductive group over $\mathz$
and the odd part of $\m_{\G_\mathz}/ \m_{\G_\mathz}^2$ is finitely generated and free as a $\mathz$-module.
Here, $\m_{\G_\mathz}$ denotes the augmented ideal of $\O(\G_\mathz)$.
\end{deff}
Note that in \cite{Shi20,Shi21}, a split quasireductive supergroup is simply called a quasireductive supergroup.

In the following,
we fix a split quasireductive supergroup $\G_\mathz$ over $\mathz$ and a split maximal torus $T_\mathz$ of $(\G_\mathz)_{\ev}$.
Let $\G$ (resp. $T$) denote the base change of $\G_{\mathz}$ (resp. $T_\mathz$) to our base field $\Bbbk$,
that is, $\O(\G):=\O(\G_\mathz)\otimes_\mathz \Bbbk$.
By definition, $\G$ is connected.
We can identify $\X(T)$ with $\mathz^{\ell}$ ($\ell$ is the rank of $\Gev$) and we often write its group low additively with unit element $\mathbf{0}$.

\begin{ex} \label{ex:quasired}
We list some basic examples of split quasireductive supergroups.
\begin{enumerate}
\item
General linear supergroups $\GL(m|n)$.
\item
Queer supergroups $\Q(n)$.
\item
\defnote{Chevalley supergroups} of classical type, see \cite{FioGav12}.
For example, \defnote{special linear supergroups} $\SL(m|n)$ and \defnote{ortho-symplectic supergroups} $\SpO(m|n)$.
\item
\defnote{Periplectic supergroups} $\P(n)$ with $n\geq2$, see \cite{Shi20}.
For a superalgebra $R$, the supergroup is given by $\P(n)(R):=\{g\in \GL(n|n)(R) \mid {}^{\mathsf{st}}g\,J_n\,g = J_n\}$.
Here, we used the following notations.
\[
{\vphantom{\left(\begin{array}{c|c}g_{00}&g_{01}\\ g_{10}&g_{11}\end{array}\right)}}^{\mathsf{st}}
\left(\begin{array}{c|c}g_{00}&g_{01}\\\hline g_{10}&g_{11}\end{array}\right):=
\left(\begin{array}{c|c}{}^{\mathsf{t}}g_{00}&{}^{\mathsf{t}}g_{10}\\\hline -{}^{\mathsf{t}}g_{01}&{}^{\mathsf{t}}g_{11}\end{array}\right),
\quad
J_n:=\left(\begin{array}{c|c}O& I_n\\\hline I_n & O \end{array}\right),
\]
where ${}^{\mathsf{t}}g_{00}$ denotes the matrix transpose of $g_{00}$ and $I_n$ denotes the identity matrix of size $n$.
One sees that $\P(n)_\ev\cong \GLL_n$.
\qedd
\end{enumerate}
\end{ex}

As we have seen in Section~\ref{sec:Lie(G)-hy(G)},
for a left $\G$-supermodule $V$, we get a left $\hy(\G)$-supermodule structure on $V$.
It is easy to see that $V$ is locally finite and has a $T$-weight decomposition,
and hence $V$ becomes a locally finite left $\hy(\G)$-$T$-supermodule.
Here, we say that a left $\hy(\G)$-supermodule $V$ is \defnote{left $\hy(\G)$-$T$-supermodule}
if the restricted $\hy(T)$-supermodule structure on $V$ arises from some $T$-supermodule structure on it.
In this way, we get a functor from the category of left $\G$-supermodules
to the category of locally finite left $\hy(\G)$-$T$-supermodules.
\begin{thm}[{\cite[Theorem~5.8]{MasShi17}}] \label{thm:G=loc-fin-hy(G)-T}
The functor discussed above gives an equivalence between
the category of left $\G$-supermodules and
the category of locally finite left $\hy(\G)$-$T$-supermodules.
\end{thm}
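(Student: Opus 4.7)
The plan is to reduce the statement to two well-known equivalences by exploiting Masuoka's $\otimes$-split Theorem~\ref{thm:tensor-split}. The even factor contributes the classical Takeuchi equivalence between $\Gev$-modules and locally finite $\hy(\Gev)$-$T$-modules (see \cite[Part~II, Chapter~1]{Jan03}), while the odd factor contributes a duality between right $\bigwedge(\Lie(\G)_\odd^*)$-supercomodules and left $\bigwedge(\Lie(\G)_\odd)$-supermodules, which is automatic because $\bigwedge(\Lie(\G)_\odd)$ is finite-dimensional.

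First I would verify that the functor stated is well-defined. For a left $\G$-supermodule $V$ with coaction $\rho_V:V\to V\otimes \O(\G)$, the formula \eqref{eq:acthy} yields a left $\hy(\G)$-supermodule structure, and local finiteness is inherited from the fact that every element of $V$ generates a finite-dimensional subcomodule. The $T$-supermodule structure is obtained by corestricting $\rho_V$ along the Hopf surjection $\O(\G)\twoheadrightarrow \O(\Gev)\twoheadrightarrow \O(T)$, and by construction the induced $\hy(T)$-action agrees with the restriction of the $\hy(\G)$-action to $\hy(T)\subset \hy(\Gev)\subset \hy(\G)$.

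For the inverse functor, given a locally finite left $\hy(\G)$-$T$-supermodule $V$, I would use the decomposition $\hy(\G) \cong \hy(\Gev) \otimes \bigwedge(\Lie(\G)_\odd)$ to view $V$ as carrying both an $\hy(\Gev)$-action (locally finite and compatible with $T$) and a $\bigwedge(\Lie(\G)_\odd)$-action. The classical equivalence produces a right $\O(\Gev)$-supercomodule structure from the $\hy(\Gev)$-$T$-structure, while the finite-dimensional duality turns the $\bigwedge(\Lie(\G)_\odd)$-action into a right $\bigwedge(\Lie(\G)_\odd^*)$-supercomodule structure. The target $\O(\G)$-coaction is then assembled using the tensor split isomorphism $\O(\G) \cong \O(\Gev) \otimes \bigwedge(\Lie(\G)_\odd^*)$.

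The main technical point, and the step I expect to require the most care, is the compatibility between the even and odd pieces. The tensor split is only an isomorphism of left $\O(\Gev)$-comodule superalgebras; the full Hopf structure of $\O(\G)$ is a Radford-type (smash/semidirect) biproduct that encodes the $\Gev$-action on $\Lie(\G)_\odd^*$, dually the adjoint action $\triact$ of $\hy(\Gev)$ on $\Lie(\G)_\odd$. I must verify that the mixed relations built into an $\hy(\G)$-supermodule (how $\hy(\Gev)$ commutes past $\bigwedge(\Lie(\G)_\odd)$ via $\triact$, cf.~Lemma~\ref{prp:triact-prp}) translate exactly into the coassociativity of the assembled $\O(\G)$-coaction, and conversely that any $\O(\G)$-coaction decomposes accordingly. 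Once this dictionary is established, the two functors are mutually quasi-inverse by a routine diagram chase, and naturality on morphisms is then transparent.
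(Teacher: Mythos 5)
The paper does not prove this theorem; it is imported verbatim from \cite[Theorem~5.8]{MasShi17}, whose proof proceeds exactly along the route you describe: reduce via the $\otimes$-split decomposition (Harish--Chandra pair data) to the classical Jantzen--Takeuchi equivalence for $\Gev$ plus the finite-dimensional module/comodule duality for $\bigwedge(\Lie(\G)_\odd)$, with the real work being the bosonization compatibility you flag in your last paragraph. Your outline is sound and matches that approach; just be aware that the deferred step --- checking that the adjoint-action commutation relations of Lemma~\ref{prp:triact-prp} dualize precisely to coassociativity of the assembled $\O(\G)$-coaction --- is the substantive content of the cited proof, not a routine afterthought.
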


\subsection{Root systems} \label{sec:root-sys}
Let $\g=\g_\eve\oplus\g_\odd$ be the Lie superalgebra $\Lie(\G)$ of $\G$.
As we have seen in Section~\ref{sec:normal} (for $\N=\G$),
the left coadjoint coaction of $\O(\G)$ induces the adjoint action of $\G$ on $\g$.
Restricting the action to $T$,
the Lie superalgebra $\g$ forms a left $T$-supermodule.
Since $T$ is a diagonalizable group scheme,
$\g$ decomposes into weight superspaces as follows:
\[
\g =
\bigoplus_{\alpha\in \X(T)} \g^\alpha =
(\bigoplus_{\alpha\in\X(T)} \g_{\eve}^\alpha)
\oplus
(\bigoplus_{\gamma\in\X(T)} \g_{\odd}^\gamma),
\]
where $\g^\alpha$ denotes the $\alpha$-weight super-subspace of $\g$.
By \cite[Part~I, 7.14]{Jan03}, we get
\[
\g^\alpha =
\{ X\in \g \mid u \triact X = \alpha(u) X \text{ for all } u\in\hy(T) \},
\]
where $\triact$ is the adjoint action \eqref{eq:triact}.
Here, we regard $\X(T)$ as a subset of $\O(T)$.
Let $\h:=\g^{\mathbf{0}}$ be the $\mathbf{0}$-weight super-subspace of $\g$ which forms a Lie super-subalgebra of $\g$.
Note that, the even part $\g_\eve$ of $\g$ coincides with the Lie algebra $\Lie(\Gev)$.
By definition, we see that $\h_\eve = \Lie(T)$.
For $\epsilon\in\mathz_2$, we set $\Deltaa_\epsilon := \{ \alpha\in\X(T) \mid \g_\epsilon^{\alpha} \not=0 \} \setminus \{\mathbf{0}\}$
and
\[
\Deltaa :=
\begin{cases}
\Deltaa_\eve \cup \Deltaa_\odd                     & \text{if } \h_{\odd}=0, \\
\Deltaa_\eve \cup \Deltaa_\odd \cup \{\mathbf{0}\} & \text{otherwise}.
\end{cases}
\]
We call $\Deltaa$ the \defnote{root system} of $\G$ with respect to $T$.
Note that, $\Deltaa_\eve$ is the root system of $G$ with respect to $T$ in the usual sense.
Moreover, the quadruple $(\X(T),\Deltaa_\eve,\X(T)^\vee,\Deltaa_\eve^\vee)$ forms a root datum of the pair $(\Gev,T)$,
see \cite[Appendix~C]{Mil17}.
Let $\lambda_1,\dots,\lambda_\ell$ denote a basis of $\X(T)\cong\mathz^\ell=\bigoplus_{i=1}^\ell\mathz \lambda_i$,
where $\ell$ is the rank of $\Gev$.
\begin{ex} \label{ex:root-sys}
Here we list some examples of root systems.
\begin{enumerate}
\item \label{ex:root-sys(1)}
If $\G=\GL(m|n)$ with the standard maximal torus $T$ of $\Gev=\GLL_m\times\GLL_n$
(i.e., the subgroup of $\Gev$ consisting all diagonal matrices),
then $\X(T)\cong \bigoplus_{i=1}^{m+n}\mathz \lambda_i$ and
$\Deltaa=\Deltaa_\eve\sqcup\Deltaa_\odd=\{\lambda_i-\lambda_j \mid 1\leq i\neq j \leq m+n\}$
with $\Deltaa_\eve=\{\lambda_i-\lambda_j \mid 1\leq i\neq j \leq m\}\cup\{\lambda_i-\lambda_j \mid m+1\leq i\neq j \leq m+n\}$.
\item \label{ex:root-sys(2)}
If $\G=\Q(n)$ with the standard maximal torus $T$ of $\Gev=\GLL_n$,
then $\X(T)\cong \bigoplus_{i=1}^{n}\mathz \lambda_i$ and
$\Deltaa=\{\lambda_i-\lambda_j \mid 1\leq i\neq j\leq n\}\cup\{\mathbf{0}\}$
with $\Deltaa_\eve=\Deltaa_\odd$.
\item \label{ex:root-sys(3)}
If $\G=\P(n)$ with the standard maximal torus $T$ of $\Gev=\GLL_n$,
then $\X(T)\cong \bigoplus_{i=1}^{n}\mathz \lambda_i$ and
$\Deltaa_\eve=\{\lambda_i-\lambda_j \mid 1\leq i\neq j\leq n\}, \Deltaa_\odd=\{\pm(\lambda_i+\lambda_j),2\lambda_t \mid 1\leq i<j\leq n, 1\leq t\leq n\}$
with $\Deltaa=\Deltaa_\eve\sqcup\Deltaa_\odd$.
\item \label{ex:root-sys(4)}
Let $(\X,\R,\X^\vee,\R^\vee)$ be a root datum,
and let $F$ be a corresponding connected and split reductive group (defined over $\Bbbk$) with split maximal torus $T$.
Take group-like elements $g_1,\dots,g_n\in \gpl(\O(F))$.
By slightly modifying the algebraic supergroup $G_{g,x}$ given in \cite[Section~4]{MasZub17},
we consider the semidirect product
\[
\F^{\langle g_1,\dots,g_n\rangle}
:= F\ltimes (\Gao)^n
\]
such that
$\F^{\langle g_1,\dots,g_n\rangle}(R)=F(R)\times R_\odd^n$ as sets and the multiplication is
\[
\big(f, (x_i)_{1\leq i\leq n}\big).\big(k, (y_i)_{1\leq i\leq n}\big)
:= \big(fk, (k(g_i)x_i+y_i)_{1\leq i\leq n}\big)
\]
for $f,k\in F(R)$ and $(x_i)_{1\leq i\leq n},(y_i)_{1\leq i\leq n} \in R_\odd^n$,
where $R$ is a commutative superalgebra.
By definition, $(\F^{\langle g_1,\dots,g_n\rangle})_\ev = F$ and $\F^{\langle g_1,\dots,g_n\rangle}$ forms a split quasireductive supergroup.
The even part $\Deltaa_\eve$ of the root system $\Deltaa$ of $\F^{\langle g_1,\dots,g_n\rangle}$ with respect to $T$ is just $\R$.
Since $\gpl(\O(F))\hookrightarrow \gpl(\O(T)) = \X$,
we shall write $\chi_i:=g_i|_T$ for all $1\leq i\leq n$.
Then the odd part of $\Deltaa_\odd$ of $\Deltaa$ is given by $\{-\chi_1,\dots,-\chi_n\}$.
\qedd
\end{enumerate}
\end{ex}

For each $\epsilon\in\mathz_2$, we set $\ell_\epsilon:=\dim(\h_\epsilon)$.
Note that, $\ell_\eve$ coincides with the rank $\ell$ of $\Gev$.
In \cite[Theorem~3.11]{Shi20}, Poincar\'e-Birkhoff-Witt (PBW) theorem for $\hy(\G)$ has been established.
It states that we can take a homogeneous basis
\[
\begin{gathered}
\{ X_\alpha\in\g_\eve^\alpha \mid \alpha\in\Deltaa_\eve \}
\cup
\{ H_i\in\h_\eve \mid 1\leq i\leq \ell_\eve \} \\
\cup
\{ Y_{(\gamma,j)}\in \g_\odd^\gamma \mid \gamma\in\Deltaa_\odd, 1\leq j\leq \dim(\g_\odd^\gamma) \}
\cup
\{ K_t\in\h_\odd \mid 1\leq t\leq \ell_\odd \}
\end{gathered}
\]
of $\g=\g_\eve\oplus\g_\odd$
so that
the set of all products of factors of the following type (taken in any fixed total order) forms a basis of $\hy(\G)$:
\[
H_{i}^{(m_i)}, \quad
X_{\alpha}^{(n_\alpha)}, \quad
K_{t}^{\epsilon_t},\quad
Y_{(\gamma,j)}^{\epsilon(\gamma,j)}
\]
with $n_\alpha,m_i\in\mathz_{\geq0}$, $\alpha\in\Deltaa_\eve$, $1\leq i\leq \ell_\eve$,
$\gamma\in\Deltaa_\odd$, $1\leq j \leq \dim(\g_\odd^\gamma)$, $1\leq t \leq \ell_\odd$
and
$\epsilon_t, \epsilon(\gamma,j) \in \{0,1\}$.
See also Theorem~\ref{thm:tensor-split}.
Here, we used the symbol of the ``divided powers'' $X_\alpha^{(n)}$ and $H_i^{(m)}$ for $X_\alpha$ and $H_i$.
For more detail, see \cite[\S3.4]{Shi20}.
In the following, to simplify the notation, we write $Y_\gamma:=Y_{(\gamma,1)}$ if $\dim(\g_\odd^\gamma)=1$ for $\gamma\in\Deltaa_\odd$.

One sees that $\hy(\G)$ is a cocommutative supercoalgebra of \defnote{Birkhoff-Witt type}
(for the non super-situation, see \cite[Section~3.3.5]{Tak75}).
In particular, if we denote the comultiplication of $\hy(\G)$ by $\Delta$, then we have
\begin{equation} \label{eq:BW-type}
\Delta(X_\alpha^{(n)}) = \sum_{i+j=n} X_\alpha^{(i)} \otimes X_\alpha^{(j)}
\quad\text{and}\quad
X_\alpha^{(n)} X_\alpha^{(m)} = \binom{n+m}{n}X_\alpha^{(m+n)}
\end{equation}
for $n,m\in\mathbb{N}\cup\{0\}$ and $\alpha\in\Deltaa_\eve$.
Here, $\binom{m+n}{n}$ denotes the binomial coefficient.

\subsection{Characters} \label{sec:char-cent2}
It is known that $\Gev$ is generated by
the split maximal torus $T$ and the \defnote{$\alpha$-root subgroups $U_\alpha$} of $\Gev$ for all $\alpha\in\Deltaa_\eve$,
see \cite[Theorem~21.11]{Mil17} for example.
Since each $U_\alpha$ is isomorphic to the one-dimensional additive group (scheme) $\Ga$,
we see that $\X(U_\alpha) \cong \gpl(\O(\Ga))$ is trivial,
and hence any character of $\Gev$ is trivial on $U_\alpha$.
In particular, the map $\X(\Gev)\to \X(T)$; $\chi\mapsto \chi|_T$ is injective.
\begin{rem} \label{rem:X_0(T)}
More precisely,
it is known (see \cite[Part~II, 1.18]{Jan03}) that
\[
\X(\Gev)\longrightarrow \X_0(T):=\{\lambda \in \X(T) \mid \langle \lambda, \alpha^\vee \rangle = 0 \text{ for all } \alpha\in\Deltaa_\eve \};
\quad \chi\longmapsto\chi|_T
\]
gives an isomorphism,
where $\alpha^\vee \in \X(T)^\vee=\Hom(\Gm,T))$ denotes the \defnote{dual root} corresponding to $\alpha$
and $\langle\;,\;\rangle$ denotes the perfect pairing $\X(T)\times \X(T)^\vee \to\mathbb{Z}$.
\qedd
\end{rem}

\begin{lemm} \label{prp:character2}
The map $\X(\G)\to \X(T);\;\chi\mapsto \chi|_{T}$ is injective.
More precisely, $\X(\G)\to \X_0(T);\;\chi\mapsto\chi|_{T}$ is injective.
\end{lemm}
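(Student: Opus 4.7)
The plan is to factor the restriction map $\X(\G)\to\X(T)$ through $\X(\Gev)$ and quote the two injectivity results already in hand. Concretely, for any $\chi\in\X(\G)$ we have $\chi|_T = (\chi|_{\Gev})|_T$, so the map in question is the composition of
\[
\X(\G)\longrightarrow \X(\Gev);\quad \chi\longmapsto \chi|_{\Gev}
\qquad\text{and}\qquad
\X(\Gev)\longrightarrow \X(T);\quad \psi\longmapsto\psi|_T.
\]
The first arrow is injective by Lemma~\ref{prp:character}. The second arrow is injective by the discussion immediately preceding the lemma: $\Gev$ is generated by $T$ and the root subgroups $U_\alpha$ ($\alpha\in\Deltaa_\eve$), each $U_\alpha\cong \Ga$ has trivial character group, so a character of $\Gev$ is determined by its restriction to $T$. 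This already gives the first (injectivity) assertion.

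For the refinement to $\X_0(T)$, I would invoke Remark~\ref{rem:X_0(T)}, which identifies the image of $\X(\Gev)\hookrightarrow \X(T)$ precisely with $\X_0(T)=\{\lambda\in\X(T)\mid \langle\lambda,\alpha^\vee\rangle=0\text{ for all }\alpha\in\Deltaa_\eve\}$. Since the map $\X(\G)\to\X(T)$ factors through $\X(\Gev)$, its image automatically lies in $\X_0(T)$, and the factored map $\X(\G)\to\X_0(T)$ inherits injectivity from the two steps above.

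No step is really an obstacle here: the statement is a two-line consequence of Lemma~\ref{prp:character} together with the classical fact recalled in Remark~\ref{rem:X_0(T)}. The only mild subtlety is making sure that the identification $\X(\Gev)\cong \X_0(T)$ used in Remark~\ref{rem:X_0(T)} is genuinely available in our setting, which it is because $\Gev$ is split connected reductive with split maximal torus $T$ by the definition of split quasireductive supergroup.
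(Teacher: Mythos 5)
Your proof is correct and is exactly the paper's argument: the paper's own proof reads ``By Lemma~\ref{prp:character} and Remark~\ref{rem:X_0(T)}, the claim follows immediately,'' which is precisely your factorization of the restriction map through $\X(\Gev)$. You have simply written out the details that the paper leaves implicit.
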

\begin{proof}
By Lemma~\ref{prp:character} and Remark~\ref{rem:X_0(T)}, the claim follows immediately.
\end{proof}

\begin{ex}\label{ex:X(Q(n))}
We determine the character group $\X(\Q(n))$ of the queer supergroup $\Q(n)$.
One easily sees that $\X_0(T)=\{m(\lambda_1+\cdots+\lambda_n)\mid m\in\mathz\}$.
Since $\det_\eve$ is a non-trivial character,
this shows that $\X(\Q(n)) = \{ \det_\eve^m\mid m\in\mathz\}$ by Lemma~\ref{prp:character2}.
Note that, the Berezinian $\Ber$ is trivial on $\Q(n)$.
\qedd
\end{ex}

\subsection{Frobenius kernels} \label{sec:Frob-ker}
In this subsection, we suppose that $\Bbbk$ is a perfect field of characteristic $p>2$ and fix a positive integer $r$.
Let $\G$ be an algebraic supergroup over $\Bbbk$, in general.

For a commutative superalgebra $R$,
we define a commutative superalgebra $R^{(r)}$ so that
$R^{(r)}=R$ as a super-ring and the scalar multiplication is given by $c.a = c^{p^{-r}} a$ for all $c\in \Bbbk$ and $a\in R$.
We define a supergroup $\G^{(r)}$
so that $\G^{(r)}(R) := \G(R^{(-r)})$,
and define a morphism $\Fr^r : \G \to \G^{(r)}$ of supergroups, called the \defnote{$r$-th Frobenius morphism},
as follows:
\[
\Fr^r(R) : \G(R) \longrightarrow \G^{(r)}(R);
\quad
g \longmapsto \big( \O(\G) \to R^{(-r)};\; a \mapsto g(a^{p^r}) \big).
\]
The kernel of the morphism $\Fr^r$ is called
the \defnote{$r$-th Frobenius kernel} of $\G$ which we denote by $\G_r$.

It is easy to see that $\G_r$ is represented by the quotient Hopf superalgebra $\O(\G)/\m_{\G}^{p^r}$ of $\O(\G)$,
where $\m_{\G}$ is the augmentation super-ideal of $\O(\G)$.
Since $a^{2}=0$ for all $a\in \O(\G)_\odd$,
we see that for a commutative superalgebra $R$ and $g\in \G(R)$,
the map $g':=(\Fr^r(R))(g):\O(\G)\to R^{(-r)}$ factors through the canonical quotient $\O(\G)\to\O(\Gev)$,
and hence we may identify $g'\in\Gevv(R^{(-r)})=\Gevv^{(r)}(R)$.
Thus, we may and do assume that $\Fr^r:\G\to \Gevv^{(r)}$.
By Theorem~\ref{thm:tensor-split} for $\G_r$, Masuoka \cite{Mas13} showed the following result:
\begin{prop}\label{prp:Gevr=Grev}
We have $(\Gev)_r=(\G_r)_\ev$ and $\Lie(\G_r)_\odd=\Lie(\G)_\odd$.
In particular, there exists a counit preserving isomorphism $\O(\G_r) \cong \O((\Gev)_r)\otimes \bigwedge (\Lie(\G)_\odd^*)$
of (left $\O((\Gev)_r)$-comodule) superalgebras.
\end{prop}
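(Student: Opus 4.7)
The plan is to establish the two pointwise identifications $\Lie(\G_r)_\odd=\Lie(\G)_\odd$ and $(\G_r)_\ev=(\Gev)_r$ separately, and then to obtain the tensor decomposition as an immediate consequence of Theorem~\ref{thm:tensor-split} applied to the (finite) algebraic supergroup $\G_r$.

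First I would handle the identification of Lie superalgebras. Since $p>2$ and $r\geq 1$, one has $p^r\geq 2$, so $\m_\G^{p^r}\subset \m_\G^2$. Consequently $\m_{\G_r}=\m_\G/\m_\G^{p^r}$ and $\m_{\G_r}^2=\m_\G^2/\m_\G^{p^r}$, and the natural projection induces a canonical isomorphism $\m_{\G_r}/\m_{\G_r}^2\cong \m_\G/\m_\G^2$ of superspaces. Dualizing gives $\Lie(\G_r)\cong \Lie(\G)$, and restricting to odd parts yields the desired identity.

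Next I would verify $(\G_r)_\ev=(\Gev)_r$ by comparing the kernels of two composite maps $\O(\G)\twoheadrightarrow \O((\G_r)_\ev)$ and $\O(\G)\twoheadrightarrow \O((\Gev)_r)$ out of $\O(\G)$. Using the tensor-split form of Theorem~\ref{thm:tensor-split}, identify $\O(\G)\cong A\otimes\bigwedge(W)$ with $A=\O(\Gev)$ and $W=\Lie(\G)_\odd^*$. Under this identification,
\[
\m_\G=\m_A\otimes\bigwedge(W)+A\otimes\bigwedge\nolimits^{\geq 1}(W),\qquad (\O(\G)_\odd)=A\otimes\bigwedge\nolimits^{\geq 1}(W).
\]
Since $A$ is purely even and $\bigwedge^{\geq1}(W)$ is a nilpotent ideal of $\bigwedge(W)$ satisfying $\bigwedge^{\geq k}(W)\cdot\bigwedge^{\geq l}(W)\subset\bigwedge^{\geq k+l}(W)$, an induction on $n$ gives $\m_\G^n=\sum_{k+l=n}\m_A^k\otimes\bigwedge^{\geq l}(W)$. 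Adding $(\O(\G)_\odd)$ collapses all $l\geq 1$ summands into one, so
\[
\m_\G^{p^r}+(\O(\G)_\odd)=\m_A^{p^r}\otimes\bigwedge(W)+A\otimes\bigwedge\nolimits^{\geq 1}(W),
\]
and this is precisely the kernel of $\O(\G)\twoheadrightarrow A\twoheadrightarrow A/\m_A^{p^r}=\O((\Gev)_r)$. The induced Hopf superalgebra isomorphism $\O((\G_r)_\ev)\cong \O((\Gev)_r)$ establishes $(\G_r)_\ev=(\Gev)_r$.

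With both identifications in hand, applying Theorem~\ref{thm:tensor-split} to the algebraic supergroup $\G_r$ itself produces a counit-preserving isomorphism $\O(\G_r)\cong\O((\G_r)_\ev)\otimes\bigwedge(\Lie(\G_r)_\odd^*)$ of (left $\O((\G_r)_\ev)$-comodule) superalgebras, into which we substitute the two identifications to obtain the stated decomposition. I expect the main obstacle to be the bookkeeping of $\m_\G^{p^r}$ under the tensor-split form; this is a pleasant but careful calculation, made clean by the fact that $A$ is purely even (so no sign complications arise when interleaving the $\m_A$-filtration with the $\bigwedge^{\geq l}(W)$-filtration) and by the finite-dimensionality of $W$ (which makes $\bigwedge^{\geq 1}(W)$ nilpotent).
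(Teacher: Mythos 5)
Your proposal is correct and follows exactly the route the paper indicates: the paper derives this proposition by applying Theorem~\ref{thm:tensor-split} to $\G_r$ (citing Masuoka [Ma3] for the details), which is precisely your third step, and your first two steps correctly supply the delegated verifications that $\m_{\G_r}/\m_{\G_r}^2\cong\m_\G/\m_\G^2$ and that $\m_\G^{p^r}+(\O(\G)_\odd)$ is the preimage of $\m_{\O(\Gev)}^{p^r}$. (The identification $(\G_r)_\ev=(\Gev)_r$ can also be seen without the tensor-split bookkeeping, directly from $\m_{\Gev}=\m_\G/(\O(\G)_\odd)$, but your computation is valid.)
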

Therefore, $\Lie(\G_r)=\Lie(\G)$ and $\G_r$ is {\it infinitesimal},
that is,
$\G_r$ is finite and the augmentation super-ideal $\m_{\G_r}$ of $\O(\G_r)$ is nilpotent.
In particular, $\G_r$ is a finite normal super-subgroup of $\G$, and hence $\hy(\G_r)=\O(\G_r)^*$.
\medskip

Let $V$ be a left $\Gev$-module.
We regard $V$ as a superspace by letting $V_\eve=V$ and $V_\odd=0$.
Using the $r$-th Frobenius morphism $\Fr^r:\G\to \Gevv^{(r)}$,
we may consider $V$ as a left $\G$-supermodule, which we denote by $V^{[r]}$, in a natural way.
As a right $\O(\G)$-supercomodule, the structure map of $V^{[r]}$ is given by
\[
V^{[r]} \longrightarrow V^{[r]} \otimes \O(\G);
\quad
v\longmapsto \sum_v v_{(0)}\otimes v_{(1)}^{p^r}.
\]

Let $M$ be a left $\G$-supermodule $M$ such that $\G_r$ acts trivially on $M$.
Then $M$ naturally forms a left $\G/\G_r$-supermodule (for quotient sheaves, see \cite{MasZub11}).
Since $\O(\G/\G_r)$ isomorphic to $\O(\G)^{p^r}:=\{a^{p^r} \in\O(\G) \mid a\in\O(\G)\}$,
the right $\O(\G/\G_r)$-supercomodule structure map of $M$ can be regarded as $M\to M\otimes \O(\G)^{p^r}$.
Thus, we can define a left $\Gev$-supermodule ($=$ right $\O(\Gev)$-supercomodule) structure on $M$, which we denote by $M^{[-r]}$,
as follows:
\[
M^{[-r]} \longrightarrow M^{[-r]}\otimes \O(\Gev);
\quad
m\longmapsto \sum_m m_{(0)} \otimes m_{(1)}^{p^{-r}}.
\]
By definition, we have $(M^{[-r]})^{[r]}=M$ as a $\G$-supermodule.
\begin{ex} \label{ex:[r]}
Let $M$ be a left $\G$-supermodule.
For the \defnote{$\G_r$-fixed point super-subspace} $M^{\G_r}$ of $M$,
we can consider $(M^{\G_r})^{[-r]}$.
We naturally regard $M$ as a left $\G_r$-supermodule via the inclusion $\G_r\subset \G$.
For a finite dimensional left $\G$-supermodule $M'$,
we can make ${}_{\G_r}\underline{\Hom}(M',M)$ into a left $\G$-supermodule by the conjugate action.
As a left $\hy(\G)$-supermodule, the induced action is given by
\[
(u.f)(v) := \sum_u (-1)^{|f||u_{(2)}|}\, u_{(1)} f(\s(u_{(2)}) v),
\]
where $f\in {}_{\G_r}\underline{\Hom}(M',M)$, $u\in\hy(\G)$ and $v\in M'$.
Here, $\s$ denotes the antipode of $\hy(\G)$.
Since $M^{\G_r}$ can be identified with ${}_{\G_r}\underline{\Hom}(\Bbbk,M)$,
we can also consider ${}_{\G_r}\underline{\Hom}(M',M)^{[-r]}$.
Note that, the ``evaluation map''
\[
\varphi: {}_{\G_r}{\Hom}(M',M) \otimes M'\longrightarrow M;
\quad
f\otimes v\mapsto f(v)
\]
is a morphism of superspaces,
since ${}_{\G_r}{\Hom}(M',M)={}_{\G_r}\underline{\Hom}(M',M)_\eve$ consists of parity preserving morphisms.
Moreover, we get
\[
\varphi(u.(f\otimes v))
= \sum_{u} (u_{(1)}.f)(u_{(2)}v)
= \sum_{u} u_{(1)}f(\s(u_{(2)})u_{(3)}v)
=u \varphi(f\otimes v)
\]
for each $u\in\hy(\G)$, $f\in {}_{\G_r}\Hom(M',M)$ and $v\in M'$.
This shows that $\varphi$ is actually a $\G$-supermodule homomorphism.
\qedd
\end{ex}

Again, we suppose that $\G$ is split quasireductive and set $\g:=\Lie(\G)$.
Let $r$ be a fixed positive integer.
Set $n_\epsilon := \dim(\g_\epsilon)$ for $\epsilon\in\mathz_2$.
Suppose that $f_1,\dots,f_{n_\eve}\in \m_{\G}$ forms a basis of $\g_\eve=\Lie(\Gev)$
and $f_{n_\eve+1},\dots,f_{n_\eve+n_\odd} \in \m_{\G}$ forms a basis of $\g_\odd$.
Since $\Gev$ is reduced, the set
$\{f_1^{a_1}\cdots f_{n_\eve}^{a_{n_\eve}} \mid 0\leq a_1,\dots,a_{n_\eve}\leq p^r-1 \}$
forms a basis of $\O((\Gev)_r)$, see \cite[Part~I, 9.6]{Jan03}.
Thus, by Proposition~\ref{prp:Gevr=Grev}, the set
\begin{equation} \label{eq:basis_of_O(G_r)}
\bigset{
f_1^{a_1}\cdots f_{n_\eve}^{a_{n_\eve}}\cdot f_{n_\eve+1}^{\epsilon_1} \cdots f_{n_\eve+n_\odd}^{\epsilon_{n_\odd}}}{
\begin{gathered}
0\leq a_1,\dots,a_{n_\eve}\leq p^r-1,\\
\epsilon_1,\dots,\epsilon_{n_\odd}\in\{0,1\}
\end{gathered}}
\end{equation}
forms a basis of $\O(\G_r)$.
In particular, we have
\[
\dim(\O(\G_r)) = p^{r n_\eve}\cdot 2^{n_\odd}.
\]

\begin{ex} \label{ex:Gao_r}
Recall that, $\Gao$ is the one-dimensional odd unipotent supergroup
with $\O(\Gao)=\Bbbk[z]/(z^2)$, see Example~\ref{ex:Gao}\eqref{ex:Gao(3)}.
Then for a commutative superalgebra $R$, we have
\[
\Fr^r(R):\Gao(R)\longrightarrow (\Gao)^{(r)}(R);
\quad
g\longmapsto(z\mapsto z^{p^r}\mapsto g(z^{p^r})).
\]
Since $p>2$ and $z^2=0$,
we conclude that the $r$-th Frobenius kernel $(\Gao)_r$ of $\Gao$ coincides with $\Gao$.
For the supergroup $\F^{\langle g_1,\dots,g_n\rangle}=F\ltimes(\Gao)^n$ defined in Example~\ref{ex:root-sys}\eqref{ex:root-sys(4)},
if the group-like elements $g_1,\dots,g_n$ are trivial,
then $\F^{\langle 1,\dots,1\rangle}=F\times (\Gao)^n$ and $(\F^{\langle 1,\dots,1\rangle})_r = F_r\times (\Gao)^n$,
where $F_r$ denotes the $r$-th Frobenius kernel of $F$.
\qedd
\end{ex}

In the following, we regard $\hy(\G_r)$ as a Hopf super-subalgebra of $\hy(\G)$ via the inclusion $\G_r\subset \G$.
The following is a PBW type theorem for the $r$-th Frobenius kernel $\G_r$ of $\G$.
\begin{thm}\label{prp:PBW-Gr}
For any total order on the homogeneous basis of $\g=\g_\eve\oplus\g_\odd$,
the set of all products of factors of type
\[
H_{i}^{(m_i)}, \quad
X_{\alpha}^{(n_\alpha)}, \quad
K_{t}^{\epsilon_t},\quad
Y_{(\gamma,j)}^{\epsilon(\gamma,j)}
\]
($0\leq n_\alpha,m_i\leq p^r-1$, $\alpha\in\Deltaa_\eve$, $1\leq i\leq \ell_\eve$,
$\gamma\in\Deltaa_\odd$, $1\leq j \leq \dim(\g_\odd^\gamma)$, $1\leq t \leq \ell_\odd$
and
$\epsilon_t, \epsilon(\gamma,j) \in \{0,1\}$),
taken in $\hy(\G)$
with respect to the order, form a basis of $\hy(\G_r)$.
\end{thm}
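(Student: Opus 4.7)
The plan is to combine three already-established ingredients: the PBW theorem for $\hy(\G)$ from \cite[Theorem~3.11]{Shi20}, the classical PBW theorem for the hyperalgebra of the $r$-th Frobenius kernel of a split and connected reductive group (as in \cite[Part~II, \S3]{Jan03}), and the $\otimes$-split decomposition of $\hy(\G_r)$ coming from Theorem~\ref{thm:tensor-split} together with Proposition~\ref{prp:Gevr=Grev}. Fix any total order on the homogeneous basis $\{X_\alpha,H_i,K_t,Y_{(\gamma,j)}\}$ of $\g$ and let $B_r$ denote the set of ordered products described in the statement.

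First I would verify that $B_r\subset\hy(\G_r)$. The odd factors $K_t^{\epsilon_t}$ and $Y_{(\gamma,j)}^{\epsilon(\gamma,j)}$ lie in $\g_\odd=\Lie(\G)_\odd=\Lie(\G_r)_\odd\subset\hy(\G_r)$ by Proposition~\ref{prp:Gevr=Grev}. The even divided powers $H_i^{(m_i)}$ and $X_\alpha^{(n_\alpha)}$ with $0\leq m_i,n_\alpha\leq p^r-1$ lie in $\hy((\Gev)_r)=\hy((\G_r)_\ev)$ by the classical Frobenius-kernel PBW theorem for the split and connected reductive group $\Gev$. Since $\hy(\G_r)$ is a subalgebra of $\hy(\G)$, every product in $B_r$ belongs to $\hy(\G_r)$.

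Linear independence of $B_r$ is then automatic: with respect to the same chosen total order, $B_r$ is a subset of the PBW basis of $\hy(\G)$ furnished by \cite[Theorem~3.11]{Shi20}. To conclude that $B_r$ is a basis of $\hy(\G_r)$, I would argue by dimension count. On one hand, $|B_r|=(p^r)^{n_\eve}\cdot 2^{n_\odd}=p^{rn_\eve}\cdot 2^{n_\odd}$ by construction (there are $n_\eve=\ell_\eve+|\Deltaa_\eve|$ even basis vectors, each contributing $p^r$ choices of exponent, and $n_\odd=\ell_\odd+\sum_\gamma\dim(\g_\odd^\gamma)$ odd basis vectors, each contributing $2$ choices). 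On the other hand, $\G_r$ is finite, so $\hy(\G_r)=\O(\G_r)^*$, and the basis \eqref{eq:basis_of_O(G_r)} yields $\dim\O(\G_r)=p^{rn_\eve}\cdot 2^{n_\odd}$. The two numbers agree, so $B_r$ spans $\hy(\G_r)$, whence is a basis.

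I do not expect any genuine obstacle; the only mild point is that the conclusion must be insensitive to the chosen total order, and indeed every step above is order-independent: membership in $\hy(\G_r)$ uses only that $\hy(\G_r)$ is a subalgebra, linear independence is inherited from the unrestricted PBW theorem of $\hy(\G)$ (which itself holds for any total order), and the dimension count is combinatorial. If one prefers, the tensor-split isomorphism $\hy(\G_r)\cong\hy((\Gev)_r)\otimes\bigwedge(\g_\odd)$ from Theorem~\ref{thm:tensor-split} and Proposition~\ref{prp:Gevr=Grev} can be invoked to organize the spanning argument more explicitly, but it is not strictly needed once one has the dimension count.
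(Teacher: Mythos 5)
Your argument is correct and rests on exactly the same ingredients as the paper's proof: Proposition~\ref{prp:Gevr=Grev}, the classical restricted PBW theorem for $\hy((\Gev)_r)$ from \cite[Part~II, Lemma~3.3]{Jan03}, and the tensor-split structure of $\G_r$ coming from Theorem~\ref{thm:tensor-split}. The only difference is one of packaging: the paper reads the basis off directly from the supercoalgebra isomorphism $\hy(\G_r)\cong\hy((\Gev)_r)\otimes\bigwedge(\g_\odd)$, while you obtain spanning from the dimension count $\dim\hy(\G_r)=\dim\O(\G_r)=p^{rn_\eve}\cdot 2^{n_\odd}$ combined with linear independence inherited from the PBW basis of $\hy(\G)$ --- a valid and equally clean way to handle the arbitrary total order.
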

\begin{proof}
By Theorem~\ref{thm:tensor-split} for $\hy(\G_r)$ and Proposition~\ref{prp:Gevr=Grev},
we have an isomorphism $\hy(\G_r) \cong \hy((\Gev)_r) \otimes \bigwedge(\g_\odd)$ of (left $\hy((\Gev)_r)$-module) supercoalgebras.
On the other hand, since $\Gev$ is split reductive,
the set of all products (taken in the fixed order) of factors of type
$H_{i}^{(m_i)},X_{\alpha}^{(n_\alpha)}$
($0\leq n_\alpha,m_i\leq p^r-1$, $\alpha\in\Deltaa_\eve$, $1\leq i\leq \ell_\eve$)
form a basis of $\hy((\Gev)_r)$, see \cite[Part~II, Lemma~3.3]{Jan03}.
The proof is done.
\end{proof}

In particular,
it follows that $\hy(\G)$ is generated by $\hy(\Gev)$ and $\hy(\G_r)$
as a superalgebra.

\section{Unimodularity of Algebraic Supergroups} \label{sec:Unimo}
In this section,
we discuss the unimodularity of Frobenius kernels of split quasireductive supergroups.

\subsection{(Co)integrals on Hopf superalgebras} \label{sec:(co)int}
Let $H$ be a Hopf superalgebra with unit $1_H$ and counit $\varepsilon_H$, in general.
A \defnote{left cointegral on $H$} is an element $\phi\in H^*$ satisfying
\[
f * \phi = f(1_H) \phi
\]
for all $f\in H^*$.
Here, $f*\phi:H\to \Bbbk$ denotes the convolution product of $f$ and $\phi$,
that is, $(f*\phi)(h)=\sum_h (-1)^{|h_{(1)}||\phi|} f(h_{(1)}) \phi(h_{(2)})$ for $h\in H$.
In other words,
a left cointegral is an element in the space $\tint_\LL^H := \underline{\Hom}^{H}(H,\Bbbk)$,
where $\Bbbk$ is regarded as a trivial left $H$-supercomodule.
The notion of a \defnote{right cointegral on $H$} and the symbol $\tint_\RR^H$ are defined analogously.
Using the {\it bosonization technique} (see \cite[Section~10]{MasZub11} for example), we have the following:
\begin{prop}[{\cite[Corollary~3.2]{MasShiShi22}}] \label{prp:finite-integral}
Both of $\dim(\tint_\RR^H)$ and $\dim(\tint_\LL^H)$ are less than or equal to $1$,
that is,
a non-zero left or right cointegral on $H$ is unique up to scalar multiplication if it exists.
Moreover, such an element is homogeneous.
\end{prop}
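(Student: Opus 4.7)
The strategy is to reduce the super-statement to the classical Sweedler uniqueness theorem for integrals via the bosonization $\hat H := H \rtimes \Bbbk\mathz_2$. Recall that $\hat H$ is an ordinary Hopf algebra whose underlying coalgebra is $H \otimes \Bbbk\mathz_2$ and whose algebra structure is the smash product for the $\Bbbk\mathz_2$-action on $H$ given by the parity involution; the decisive property is that the category of right $H$-supercomodules is isomorphic, as an ordinary abelian category, to the category of right $\hat H$-comodules. Under this equivalence, the right regular supercomodule $H$ corresponds to the right regular $\hat H$-comodule $\hat H$, while the two inequivalent one-dimensional $H$-supercomodules $\Bbbk$ and $\Pi\Bbbk$ correspond to the two one-dimensional $\hat H$-comodules $\Bbbk_{\mathrm{triv}}$ and $\Bbbk_{\mathrm{sgn}}$ obtained by pulling the two characters of $\Bbbk\mathz_2$ back through the projection $\hat H \twoheadrightarrow \Bbbk\mathz_2$.

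From this dictionary one obtains
\[
(\tint_\LL^H)_{\eve} \cong \Hom^{\hat H}(\hat H,\Bbbk_{\mathrm{triv}}), \qquad (\tint_\LL^H)_{\odd} \cong \Hom^{\hat H}(\hat H,\Bbbk_{\mathrm{sgn}}),
\]
and the analogous identifications hold for right cointegrals. I would then invoke the standard extension of Sweedler's theorem: for any Hopf algebra $K$ and any one-dimensional $K$-comodule $\Bbbk_\chi$, the space $\Hom^K(K,\Bbbk_\chi)$ has dimension at most one (obtained by translating left cointegrals on $K$ through the group-like corresponding to $\chi$). Applied to $K = \hat H$ with $\chi$ equal to each of the two characters of $\Bbbk\mathz_2$, this yields $\dim(\tint_\LL^H)_\epsilon \leq 1$ for each $\epsilon \in \mathz_2$ separately.

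To upgrade these two bounds to the single bound $\dim \tint_\LL^H \leq 1$ and to deduce homogeneity, I would use that under bosonization a non-zero element of $\tint_\LL^H$ corresponds to a non-zero $\hat H$-colinear map $\hat H \to \Bbbk\mathz_2$, and that such a map is supported in a single $\Bbbk\mathz_2$-isotypic component of $\hat H$: the component is forced by compatibility between the $\Bbbk\mathz_2$-coaction on $\hat H$ and the antipode of $\hat H$, hence intrinsically determined by the Hopf superalgebra $H$. Translating back across the bosonization, this means any non-zero cointegral on $H$ must be homogeneous of a uniquely determined parity, giving both uniqueness up to a scalar and the final assertion.

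The main obstacle will be the last step: carefully tracking the $\mathz_2$-grading through the bosonization so that the parity of a cointegral on $H$ is canonically identified with the $\Bbbk\mathz_2$-eigenvalue on the $\hat H$ side. The requisite calculations are routine but sign-delicate, especially the verification that $\Pi\Bbbk$ corresponds to $\Bbbk_{\mathrm{sgn}}$ (and not $\Bbbk_{\mathrm{triv}}$) and that this matching is preserved under the antipode of $\hat H$; once these compatibilities are settled, the proposition follows immediately from the classical Sweedler theorem applied to $\hat H$.
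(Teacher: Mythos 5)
Your overall strategy---reducing to the classical uniqueness of integrals via the bosonization $\widehat{H}=H\rtimes\Bbbk\mathz_2$---is exactly the route the paper indicates (it does not reprove the statement but quotes \cite[Corollary~3.2]{MasShiShi22} and attributes it to the bosonization technique). However, your execution contains a concrete error and, downstream of it, a genuine gap. The error: under the equivalence between right $H$-supercomodules and right $\widehat{H}$-comodules the underlying vector space is unchanged, so the regular supercomodule $H$ does \emph{not} correspond to the regular comodule $\widehat{H}$---their dimensions differ by a factor of two. Rather, $\widehat{H}$ decomposes as a right $\widehat{H}$-comodule into the images of $H$ and of $\Pi H$. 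Consequently $\Hom^{\widehat{H}}(\widehat{H},\Bbbk)$ is not $(\tint_\LL^H)_\eve$ but the \emph{whole} of $\underline{\Hom}^H(H,\Bbbk)=\tint_\LL^H$. The gap: because your (incorrect) identification only yields $\dim(\tint_\LL^H)_\epsilon\leq1$ for each parity $\epsilon$ separately, i.e.\ $\dim\tint_\LL^H\leq2$, you still must exclude a nonzero even and a nonzero odd cointegral coexisting. Your proposed mechanism (``the component is forced by compatibility between the $\Bbbk\mathz_2$-coaction and the antipode of $\widehat{H}$'') is an assertion, not an argument, and as written nothing in the proposal rules this case out---which is precisely the content of the homogeneity claim.

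Both issues disappear once the identification is corrected. Since $\widehat{H}$ corresponds to $H\oplus\Pi H$, one gets an isomorphism of ungraded vector spaces $\tint_\LL^H\cong\tint_\LL^{\widehat{H}}$, so a single application of the classical uniqueness theorem to the ordinary Hopf algebra $\widehat{H}$ gives $\dim\tint_\LL^H\leq1$ outright; no twisted version and no case analysis over the two characters of $\Bbbk\mathz_2$ is needed. Homogeneity is then automatic: $\tint_\LL^H=\underline{\Hom}^H(H,\Bbbk)$ is by construction a $\mathz_2$-graded subspace of $H^*$, and a graded space of dimension at most one is spanned by a homogeneous element. The argument for $\tint_\RR^H$ is identical.
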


\begin{deff} \label{def:unimo}
We say that $H$ is \defnote{unimodular} if $\tint_\LL^H=\tint_\RR^H\not=0$,
that is,
there exists a non-zero two-sided (i.e, left and right) cointegral on $H$.
\end{deff}

Suppose that $H$ is finite-dimensional.
An element $t\in H$ is called a \defnote{left} (resp.~\defnote{right}) \defnote{integral in $H$} if it satisfies
$h t = \varepsilon_H(h) t$ (resp.~$t h = \varepsilon_H(h) t$) for all $h\in H$.
The space of all left (resp.~right) integrals in $H$ is denoted by $\ttint_H^\LL$ (resp.~$\ttint_H^\RR$).

In general, it is known that any finite dimensional Hopf algebra has both non-zero left/right integral.
By this fact and the dual result of \cite[Proposition~3.1]{MasShiShi22},
we have $\dim(\ttint_H^\LL) = \dim(\ttint_H^\RR)=1$ and $\s_H (\ttint_H^\LL) = \ttint_H^\RR$,
where $\s_H : H\to H$ is the antipode of $H$.
As in the non super-situation (see \cite[Chapter~10]{Rad12}), one easily sees that the following holds:
\begin{prop}\label{prp:dist-grp-like}
There uniquely exists $\dist_H \in \gpl(H^*)$ such that
$t h = \langle \dist_H, h \rangle t$
for all $h\in H \text{ and } t\in \ttint_H^\LL$.
\end{prop}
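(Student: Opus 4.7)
The plan is to adapt the classical (non-super) construction of the distinguished group-like element to the $\mathz_2$-graded setting, taking care that the resulting functional on $H$ genuinely lies in $H^*_\eve$. Fix a non-zero homogeneous left integral $t_0\in\ttint_H^\LL$; such an element exists and spans $\ttint_H^\LL$, since by the duality to \cite[Proposition~3.1]{MasShiShi22} recalled above we have $\dim(\ttint_H^\LL)=1$, and moreover $t_0$ is homogeneous by the same result.

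First I would check that right multiplication by any element of $H$ preserves the one-dimensional space $\ttint_H^\LL$. For any $h,h'\in H$, a direct computation gives
\[
h'(t_0 h)=(h't_0)h=\varepsilon_H(h')\,t_0 h,
\]
using that $t_0$ is a left integral. Thus $t_0 h\in\ttint_H^\LL=\Bbbk t_0$, so there is a unique scalar $\dist_H(h)\in\Bbbk$ with $t_0 h=\dist_H(h)\,t_0$; linearity of $h\mapsto\dist_H(h)$ is immediate, giving a well-defined element $\dist_H\in H^*$.

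Next I would verify that $\dist_H\in\gpl(H^*)$. That $\dist_H$ is an algebra map $H\to\Bbbk$ (equivalently, $\Delta_{H^*}(\dist_H)=\dist_H\otimes\dist_H$ and $\dist_H(1_H)=1$) follows from
\[
\dist_H(hh')\,t_0 = t_0(hh') = (t_0 h)h' = \dist_H(h)\,t_0 h' = \dist_H(h)\dist_H(h')\,t_0,
\]
together with $t_0\cdot 1_H=t_0$. The parity condition $\dist_H\in (H^*)_\eve$ — the point where the super-structure intervenes — I would handle as follows: since $\ttint_H^\LL=\Bbbk t_0$ sits inside the homogeneous component $H_{|t_0|}$, any $t_0 h$ with $h\in H_\odd$ has parity $|t_0|+\odd$ and simultaneously lies in $H_{|t_0|}$, forcing $t_0 h=0$ and hence $\dist_H(h)=0$. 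Thus $\dist_H$ vanishes on $H_\odd$, i.e.\ is even, and so is indeed a group-like element of the Hopf superalgebra $H^*$.

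Uniqueness is immediate: if $\dist'_H$ is any element of $\gpl(H^*)$ with $t_0 h=\langle\dist'_H,h\rangle t_0$, then $(\dist_H(h)-\dist'_H(h))t_0=0$ for all $h$, whence $\dist_H=\dist'_H$. Finally the relation transports to an arbitrary $t\in\ttint_H^\LL$, as $t=c\,t_0$ for some $c\in\Bbbk$ gives $th=c\,t_0 h=c\dist_H(h)t_0=\dist_H(h)t$. The only genuinely super-sensitive step is the parity argument that places $\dist_H$ in $(H^*)_\eve$; the remainder is a formal verification that goes through just as in the classical case \cite[Chapter~10]{Rad12}.
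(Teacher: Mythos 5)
Your proof is correct and follows exactly the route the paper intends: the paper gives no proof of this proposition, simply asserting it holds ``as in the non super-situation'' with a reference to \cite[Chapter~10]{Rad12}, and your argument is the standard one from that source. You have also correctly supplied the only genuinely super-specific point, namely that the homogeneity of $t_0$ forces $\dist_H$ to vanish on $H_\odd$ and hence to lie in $(H^*)_\eve$ as required by Definition~\ref{def:group-like}.
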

The element $\dist_H$ is the so-called \defnote{distinguished group-like element} for $H$.

\subsection{Integrals for supergroups} \label{sec:int-for-supergroup}
Let $\G$ be an algebraic supergroup, in general.
We say that $\G$ has a \defnote{left} (resp.~\defnote{right}) \defnote{integral for $\G$}
if there exists a non-zero left (resp.~right) cointegral on $\O(\G)$.
Also, we say that $\G$ is \defnote{unimodular} if $\O(\G)$ is unimodular (see Definition~\ref{def:unimo}).
\begin{thm}[{\cite[Theorem~3.7]{MasShiShi22}}] \label{prp:G-int=Gev-int}
$\G$ has a left (resp.~right) integral if and only if $\Gev$ does.
\end{thm}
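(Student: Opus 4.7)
The plan is to reduce the problem to the finite-dimensional ``odd'' part of $\O(\G)$ using Masuoka's tensor split theorem (Theorem~\ref{thm:tensor-split}). Fix the isomorphism
\[
\iota : \O(\G) \stackrel{\sim}{\longrightarrow} \O(\Gev) \otimes \bigwedge V
\]
of left $\O(\Gev)$-comodule superalgebras, where $V := \Lie(\G)_\odd^*$ is finite-dimensional. Since $\bigwedge V$ is a finite-dimensional Hopf superalgebra, it always admits nonzero left and right cointegrals (the unique up-to-scalar such element is essentially the dual of the top exterior power $\omega = v_1 \wedge \cdots \wedge v_n$ for a homogeneous basis $v_1,\dots,v_n$ of $V$, which one checks is both a left and right integral in $\bigwedge V$). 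So Proposition~\ref{prp:finite-integral} gives unique (up to scalar) nonzero $\phi_\wedge^{\LL}, \phi_\wedge^{\RR} \in (\bigwedge V)^*$.

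For the forward implication, given a left cointegral $\psi$ on $\O(\Gev)$, define $\phi : \O(\G) \to \Bbbk$ by $\phi := (\psi \otimes \phi_\wedge^{\LL}) \circ \iota$. I would verify directly that $\phi$ is a left cointegral on $\O(\G)$, i.e.\ $(\id_{\O(\G)} \otimes \phi)\circ \Delta_{\O(\G)} = 1_{\O(\G)} \otimes \phi$, by writing out $\Delta_{\O(\G)}$ in the basis $\{\iota^{-1}(b\otimes v_{i_1}\cdots v_{i_k})\}$. The crucial observation is that $\phi_\wedge^{\LL}$ kills everything in $\bigwedge V$ except its top degree, so in the expansion of $\sum a_{(1)} \phi(a_{(2)})$ only those comultiplication summands whose second tensor factor carries full odd weight survive, and these reduce exactly to the left-cointegral identity for $\psi$ on $\O(\Gev)$ (the left $\O(\Gev)$-comodule compatibility of $\iota$ is what makes this bookkeeping legitimate).

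For the converse, I would run the argument in reverse. Let $\pi : \O(\G) \twoheadrightarrow \O(\Gev)$ be the Hopf quotient, and let $s : \O(\Gev) \to \O(\G)$, $s(b):=\iota^{-1}(b \otimes 1)$, be the induced counital section coming from the tensor split. Let $\hat\omega := \iota^{-1}(1\otimes \omega) \in \O(\G)$ be the lift of the top odd element. Given a left cointegral $\phi$ on $\O(\G)$, define
\[
\psi : \O(\Gev) \longrightarrow \Bbbk, \qquad \psi(b) := \phi\bigl(s(b)\,\hat\omega\bigr).
\]
I would check two things: (i) $\psi$ is nonzero, using that $\phi$ is nonzero together with the fact that multiplication by $\hat\omega$ identifies $s(\O(\Gev)) \subset \O(\G)$ with a subspace on which $\phi$ cannot vanish identically (otherwise $\phi$ would vanish on a full $\O(\Gev)$-comodule complement as well, forcing $\phi=0$ by uniqueness in Proposition~\ref{prp:finite-integral}); and (ii) $\psi$ satisfies the left cointegral identity on $\O(\Gev)$, which follows from the cointegral identity for $\phi$ on $\O(\G)$ applied to the element $s(b)\hat\omega$, once one notes that $\Delta_{\O(\G)}(\hat\omega)$ differs from $\hat\omega \otimes 1 + 1 \otimes \hat\omega$ by lower odd-degree terms that are annihilated after pairing with the top-degree projection implicit in the formula.

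The main obstacle is that $\iota$ is \emph{only} a left $\O(\Gev)$-comodule isomorphism, not a Hopf superalgebra isomorphism; in particular $\Delta_{\O(\G)}$ is not simply $\Delta_{\O(\Gev)} \otimes \Delta_{\bigwedge V}$ under $\iota$. The key technical lemma I would need is a sufficiently precise description of $\Delta_{\O(\G)}$ transported along $\iota$, specifically that the obstruction to being a tensor product of comultiplications lives in a strictly lower odd-degree filtration piece. Combined with the fact that $\phi_\wedge^{\LL}$ is concentrated on the top exterior power (equivalently, $v \cdot \omega = 0$ in $\bigwedge V$ for any $v \in V$), this kills all the cross-terms and makes both implications work. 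Everything else is essentially bookkeeping using the PBW/tensor-split structure.
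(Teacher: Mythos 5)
First, note that the paper does not prove this statement at all---it is quoted from \cite[Theorem~3.7]{MasShiShi22}---so there is no internal proof to compare against; your proposal has to stand on its own. It does follow the same general strategy as the cited source (reduce to $\O(\Gev)$ via the tensor-split isomorphism $\iota$ and integrate out the finite-dimensional exterior factor), but it has a genuine gap: the ``key technical lemma'' you say you need is false. The obstruction to $\Delta_{\O(\G)}$ being a tensor product of comultiplications does \emph{not} live entirely in a strictly lower piece of the odd filtration. Transporting $\Delta_{\O(\G)}$ along $\iota$, the component landing in $\O(\G)\otimes\bigl(\O(\Gev)\otimes\bigwedge^{\mathrm{top}}V\bigr)$ is not governed by $\Delta_{\O(\Gev)}$ alone but is twisted by the group-like element of $\O(\Gev)$ recording the $\Gev$-coaction on the Berezinian line $\bigwedge^{\mathrm{top}}V$ (i.e.\ the determinant of the coadjoint action on $\Lie(\G)_\odd$). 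Concretely, take $\G=\GL(1|1)$, so $\Gev=\GLL_1\times\GLL_1$ has the cointegral $\psi=$ ``coefficient of $1$'', $V$ is spanned by $x_{12},x_{21}$, and $\omega=x_{12}x_{21}$. One computes
\[
\Delta(x_{12}x_{21})=x_{11}x_{22}\otimes x_{12}x_{21}+x_{12}x_{21}\otimes x_{22}x_{11}-x_{11}x_{21}\otimes x_{12}x_{11}+x_{12}x_{22}\otimes x_{22}x_{21},
\]
so for your $\phi=(\psi\otimes\phi_\wedge^{\LL})\circ\iota$ one gets $(\id\otimes\phi)\Delta(x_{12}x_{21})=x_{11}x_{22}\neq 1=\phi(x_{12}x_{21})\,1$. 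Thus the proposed $\phi$ is not a left cointegral, and the same defect breaks the verification in your converse direction.

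The repair is to replace $\psi$ by its translate $\psi(\,\cdot\;\theta)$, where $\theta\in\gpl(\O(\Gev))$ is the group-like element above ($\theta=x_{11}x_{22}$ in the $\GL(1|1)$ example); the twist coming from $\theta$ in the translated cointegral identity cancels exactly against the twist in the top-degree part of $\Delta_{\O(\G)}$, and since translation by a group-like element preserves nonvanishing, the ``if and only if'' statement survives. This twist is not optional bookkeeping: if your untwisted formula were correct, it would apply verbatim to right cointegrals as well (the top dual of $\bigwedge V$ is a two-sided integral for the primitively generated Hopf superalgebra $\bigwedge V$), and every quasireductive $\G$ with unimodular $\Gev$ would be unimodular---contradicting Theorem~\ref{prp:unimo-char=0} and the non-unimodularity of $\P(n)$ in Example~\ref{ex:non-unimo_0}. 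Indeed, the discrepancy between the left and right twists is precisely the modular character $\sum_{\gamma\in\Deltaa_\odd}\dim(\g_\odd^\gamma)\gamma$ appearing there. Your nonvanishing argument in (i) of the converse also needs more care (why can $\phi$ not vanish on $s(\O(\Gev))\hat\omega$?---this again uses that a cointegral is supported on the top filtration piece), but the missing Berezinian twist is the substantive error.
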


Assume for a moment that $\mathsf{char}(\Bbbk)=0$.
Let $F$ be an algebraic group over $\Bbbk$.
Then by Sullivan's theorem (\cite{Sul72}),
$F$ has a left (or right) integral if and only if $F$ is linearly reductive.
In particular, in this case, $F$ is automatically unimodular.
However, in our super-situation,
the existence of an integral does not imply its unimodularity (see Theorem~\ref{prp:unimo-char=0} below).

By Theorem~\ref{prp:G-int=Gev-int} (and Sullivan's theorem again),
we note that for a connected and algebraic supergroup $\G$ defined over a filed of characteristic zero,
$\G$ has a left (or right) integral if and only if $\G$ is split quasireductive.

\begin{thm} \label{prp:unimo-char=0}
Assume that $\mathsf{char}(\Bbbk)=0$ and $\G$ is a split quasireductive supergroup.
Then $\G$ is unimodular if and only if $\sum_{\gamma\in\Deltaa_{\odd}}\dim(\g_\odd^\gamma) \gamma=\mathbf{0}$ in $\X(T)$.
\end{thm}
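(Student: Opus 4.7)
The plan is to compare explicit descriptions of the left and right cointegrals on $\O(\G)$ via the Masuoka tensor-split decomposition, and then to identify their ratio with a character $\chi_\G \in \X(\G)$ whose $T$-weight reads off precisely the sum $\sum_{\gamma \in \Deltaa_\odd} \dim(\g_\odd^\gamma)\gamma$.

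First I would set up the decomposition. By Theorem~\ref{thm:tensor-split}, write $\O(\G) \cong \O(\Gev) \otimes \bigwedge(\g_\odd^*)$ as left $\O(\Gev)$-comodule superalgebras. Fix a homogeneous $T$-weight basis $f_1,\dots,f_n$ of $\g_\odd^* \subset \O(\G)_\odd$ with each $f_i$ of weight $-\gamma_i$, chosen so that the multiset $\{\gamma_1,\dots,\gamma_n\}$ equals $\Deltaa_\odd$ counted with multiplicities $\dim(\g_\odd^\gamma)$. Set $f_\top := f_1 f_2 \cdots f_n$; this is a $T$-weight vector of weight $-\sum_\gamma \dim(\g_\odd^\gamma)\gamma$. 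Since $\mathsf{char}(\Bbbk)=0$, Sullivan's theorem yields a two-sided Haar cointegral $\phi_\ev$ on $\O(\Gev)$, and Theorem~\ref{prp:G-int=Gev-int} together with Proposition~\ref{prp:finite-integral} guarantees that non-zero left and right cointegrals $\phi_L,\phi_R$ on $\O(\G)$ exist and are each unique up to scalar.

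Next, a direct analysis using the $T$-equivariance of the decomposition shows that both $\phi_L$ and $\phi_R$ must vanish on the subspace $\bigoplus_{S \neq \{1,\dots,n\}} \O(\Gev)\cdot f_S$, and on $\O(\Gev)\cdot f_\top$ each coincides with a scalar multiple of $\phi_\ev$. Exploiting the supercommutativity of $\O(\G)$ and sliding elements across $f_\top$ in the cointegrality condition then produces a character $\chi_\G \in \X(\G)$ measuring the discrepancy between $\phi_L$ and $\phi_R$; explicitly, $\chi_\G$ is the Berezinian of the adjoint representation of $\G$ on $\g$. Its restriction $\chi_\G|_T$ has weight $\sum_{\alpha \in \Deltaa_\eve} \alpha - \sum_{\gamma \in \Deltaa_\odd} \dim(\g_\odd^\gamma)\gamma$; the even root sum cancels pairwise because $\Deltaa_\eve$ is stable under $\alpha \mapsto -\alpha$ (reductivity of $\Gev$), leaving $-\sum_\gamma \dim(\g_\odd^\gamma)\gamma$.

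Thus $\G$ is unimodular iff $\phi_L$ and $\phi_R$ are proportional, iff $\chi_\G$ is the trivial character of $\G$. By Lemma~\ref{prp:character2} the latter is equivalent to $\chi_\G|_T = \mathbf{0}$, i.e., to $\sum_\gamma \dim(\g_\odd^\gamma)\gamma = \mathbf{0}$ in $\X(T)$. The main technical obstacle will be verifying that the ratio of the two cointegrals really does descend to a bona fide character of $\G$ (not merely a weight of $T$) and identifying this character with the Berezinian of $\Ad$, which requires careful tracking of the braiding signs coming from supercommutativity and the Jacobian-type contributions from the $\G$-action on $f_\top$. An alternative route that may cleanly avoid this sign bookkeeping is to apply the bosonization technique from \cite{MasShiShi22}, transferring the question to the distinguished-group-like-element calculation for the ordinary Hopf algebra $\O(\G)\#\Bbbk\mathz_2$, where the modular function formulas are classical.
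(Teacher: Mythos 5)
Your overall strategy is sound and the conclusion is right, but your route is genuinely different from the paper's, and the step you yourself flag as ``the main technical obstacle'' is exactly the point the paper does not prove but cites. The paper's proof is short: it invokes \cite[Proposition~3.16]{MasShiShi22}, which says that $\G$ is unimodular if and only if the algebra map $\chi_\G:\mathcal{U}(\g_\eve)\to\Bbbk$, $\chi_\G(X)=\mathsf{tr}(\ad'(X))$ with $\ad':\g_\eve\to\End(\g_\odd)$ the adjoint action on the odd part, is trivial; since $\mathsf{char}(\Bbbk)=0$ and $\Gev$ is connected, $\chi_\G$ is viewed as a character of $\Gev$, its triviality is tested on $T$ by the injectivity of restriction to $T$ (Lemma~\ref{prp:character2}), and the weight computation $\chi_\G|_T=\sum_{\gamma\in\Deltaa_\odd}\dim(\g_\odd^\gamma)\gamma$ (the $\h_\odd=\g_\odd^{\mathbf 0}$ summand contributing $\mathbf{0}$) finishes the argument. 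Your proposal instead tries to rebuild that input from scratch: construct $\phi_L,\phi_R$ explicitly through the decomposition of Theorem~\ref{thm:tensor-split} as $\phi_\ev$ paired against the top odd form $f_\top$, and read off the modular character as (essentially) $\det_\odd(\Ad)$, the even determinant $\det_\eve(\Ad)$ being trivial since $\sum_{\alpha\in\Deltaa_\eve}\alpha=\mathbf{0}$. That endpoint agrees with the paper's. What your route buys is transparency about \emph{where} the modular discrepancy comes from (the $\G$-action on the Berezinian line of $\g_\odd$); what it costs is that Theorem~\ref{thm:tensor-split} is only an isomorphism of comodule \emph{superalgebras}, not of Hopf superalgebras, so the comultiplication --- and hence the cointegral condition $f*\phi=f(1)\phi$ --- does not factor through the tensor decomposition naively, and the claim that $\phi_L,\phi_R$ are supported on $\O(\Gev)\cdot f_\top$ and differ by the character $\det_\odd(\Ad)$ is precisely the content of \cite[Proposition~3.16]{MasShiShi22}. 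As written you have not closed that gap; you should either cite that proposition (at which point your argument collapses to the paper's) or actually carry out your proposed alternative via bosonization, which is indeed how such statements are proved in \cite{MasShiShi22}. The remaining ingredients of your argument --- existence and uniqueness of the cointegrals via Sullivan plus Theorem~\ref{prp:G-int=Gev-int} and Proposition~\ref{prp:finite-integral}, and the reduction of triviality of the character to its restriction to $T$ --- are correct and match the paper.
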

\begin{proof}
Let $\ad':\g_\eve\to \End(\g_\odd)$ be the restriction of the adjoint representation of $\g$.
Then by \cite[Proposition~3.16]{MasShiShi22}, we know that
$\G$ is unimodular if and only if
the algebra map $\chi_\G:\mathcal{U}(\g_\eve)\to\Bbbk$ defined by the following is trivial:
\[
\chi_\G(X) = \mathsf{tr}(\ad'(X))
\quad \text{for all } X\in \g_\eve,
\]
where the universal enveloping algebra $\mathcal{U}(\g_\eve)$ of $\g_\eve$.
Since $\hy(\Gev)=\mathcal{U}(\g_\eve)$ and $\O(\Gev)\subset \hy(\Gev)^*$ (by the connectedness assumption on $\Gev$),
we may regard $\chi_\G$ with a character of $\Gev$.
Thus, we see that $\chi_\G$ is trivial if and only if the restriction $\chi_\G|_T$ to the split maximal torus $T$ is trivial by Lemma~\ref{prp:character2}.
Since the $T$-weight superspace decomposition of $\g_\odd$ is given as
$\g_\odd=\h_\odd \oplus \bigoplus_{\gamma\in\Deltaa_\odd}\g_\odd^\gamma$ with $\h_\odd=\g_\odd^{\mathbf{0}}$, we can compute
\[
\chi_\G(t) = \dim(\h_\odd)0 + \sum_{\gamma\in\Deltaa_\odd} \dim(\g_\odd^\gamma) \gamma(t)
= \sum_{\gamma\in\Deltaa_\odd} \dim(\g_\odd^\gamma) \gamma(t)
\]
for all $t\in T(R)$, where $R$ is a commutative algebra.
Thus we are done.
\end{proof}

\begin{cor} \label{cor:quasi-unimo}
Assume that $\mathsf{char}(\Bbbk)=0$.
Then $\GL(m|n)$, $\Q(n)$ and Chevalley supergroups of classical type are unimodular.
\end{cor}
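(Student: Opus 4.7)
The plan is to apply Theorem~\ref{prp:unimo-char=0} to each of the three families and verify, in every case, that the weighted sum
\[
\sum_{\gamma\in\Deltaa_\odd}\dim(\g_\odd^\gamma)\,\gamma
\]
vanishes in $\X(T)$. The uniform mechanism behind all three cases will be the symmetry $\gamma\leftrightarrow-\gamma$ of the odd root system together with the equality of multiplicities $\dim(\g_\odd^\gamma)=\dim(\g_\odd^{-\gamma})$; once this is in place the sum pairs up and cancels.

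First I would handle $\GL(m|n)$. Using Example~\ref{ex:root-sys}\eqref{ex:root-sys(1)}, $\Deltaa_\odd$ consists of the $\lambda_i-\lambda_j$ with exactly one of $i,j$ in $\{1,\dots,m\}$ and the other in $\{m{+}1,\dots,m{+}n\}$, each root having multiplicity one. This set is manifestly stable under $(i,j)\mapsto(j,i)$, which is exactly the involution $\gamma\mapsto-\gamma$, so the sum vanishes. For $\Q(n)$, Example~\ref{ex:root-sys}\eqref{ex:root-sys(2)} gives $\Deltaa_\odd=\{\lambda_i-\lambda_j\mid i\neq j\}$, and the same swap $(i,j)\mapsto(j,i)$ shows the sum vanishes. (The contribution of $\h_\odd$ to the trace was already dropped in the proof of Theorem~\ref{prp:unimo-char=0}, so the presence of $\mathbf{0}\in\Deltaa$ for $\Q(n)$ is irrelevant here.)

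For the Chevalley supergroups of classical type of~\cite{FioGav12}, the even/odd root system is the root system of a basic classical Lie superalgebra in Kac's list (types $A(m|n)$, $B(m|n)$, $C(n)$, $D(m|n)$, and the exceptionals), in particular for $\SL(m|n)$ and $\SpO(m|n)$. In every one of these types, the odd root system is invariant under negation and $\dim\g_\odd^\gamma=\dim\g_\odd^{-\gamma}$ (the odd root spaces are in fact one-dimensional apart from the standard doubling in type~$A(n|n)$, which is still symmetric under negation). Pairing each $\gamma$ with $-\gamma$ makes the weighted sum vanish, and Theorem~\ref{prp:unimo-char=0} then yields unimodularity.

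The only non-routine point is the last paragraph: one must actually invoke the classification of odd roots of basic classical Lie superalgebras to see that $\Deltaa_\odd$ is $\pm$-symmetric with equal multiplicities. This is standard (it follows from the existence of an invariant even non-degenerate bilinear form on $\g$, which forces $\g_\odd^{-\gamma}\cong(\g_\odd^\gamma)^*$), but it is where some external input is required; for $\GL(m|n)$ and $\Q(n)$ the symmetry is evident by inspection of the explicit descriptions in Example~\ref{ex:root-sys}. Note incidentally that the same criterion shows $\P(n)$ is \emph{not} unimodular, since the roots $2\lambda_t$ appear in $\Deltaa_\odd$ without their negatives (cf.~Example~\ref{ex:root-sys}\eqref{ex:root-sys(3)}), consistent with the introduction's assertion that unimodularity can fail in the super setting.
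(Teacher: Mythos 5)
Your proposal is correct and is essentially the paper's own argument: the paper likewise reduces to Theorem~\ref{prp:unimo-char=0} and observes that for these three families one can split $\Deltaa_\odd=\Deltaa_\odd^+\sqcup\Deltaa_\odd^-$ with $\Deltaa_\odd^+=-\Deltaa_\odd^-$ and $\dim(\g_\odd^\gamma)=\dim(\g_\odd^{-\gamma})$, so the weighted sum cancels in $\pm\gamma$ pairs. Your version merely makes the case-by-case verification of this $\pm$-symmetry more explicit (including the correct observations that the $\h_\odd$ term is irrelevant for $\Q(n)$ and that $\P(n)$ fails the criterion).
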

\begin{proof}
As in Section~\ref{sec:simple-G-supermods} and Example~\ref{ex:bases},
in these cases, we can define an ``order'' on $\Deltaa_\odd$ satisfying the following properties:
\begin{equation}\label{eq:condition}
\begin{gathered}
\Deltaa_\odd=\Deltaa_\odd^+\sqcup\Deltaa_\odd^- \,\,
\text{(disjoint union)},\quad \Deltaa_\odd^+=-\Deltaa_\odd^-,\\
\text{and}\quad \dim(\g_\odd^\gamma)=\dim(\g_\odd^{-\gamma})\quad \text{for all } \gamma\in\Deltaa_\odd^+
\end{gathered}
\end{equation}
Thus, we have $\sum_{\gamma\in\Deltaa_{\odd}}\dim(\g_\odd^\gamma) \gamma=\mathbf{0}$.
By Theorem~\ref{prp:unimo-char=0}, we are done.
\end{proof}

Note that, in the above proof, the ``order'' can be found for such supergroups without assuming that
the base field $\Bbbk$ is of characteristic zero.

\begin{ex} \label{ex:non-unimo_0}
Assume that $\mathsf{char}(\Bbbk)=0$.
\begin{enumerate}
\item \label{ex:non-unimo_0(1)}
Suppose that $\G=\P(n)$ with $n\geq2$.
Then by Example~\ref{ex:root-sys}\eqref{ex:root-sys(3)},
we have $\sum_{\gamma\in\Deltaa_{\odd}}\dim(\g_\odd^\gamma) \gamma = 2\sum_{t=1}^n\lambda_t \neq \mathbf{0}$.
Thus, $\P(n)$ is non-unimodular.
\item \label{ex:non-unimo_0(2)}
We consider the following closed supergroup $\G$ of $\GL(3|3)$.
\[
\G(R) := \{
\left(\begin{array}{ccc|ccc}
h & 0 & x & 0 & 0 & 0 \\
0 & 1 & 0 & a & 0 & b \\
y & 0 & k & 0 & 0 & 0 \\ \hline
0 & 0 & 0 & h & 0 & x \\
a & 0 & b & 0 & 1 & 0 \\
0 & 0 & 0 & y & 0 & k
\end{array}\right)
\in \GL(3|3)(R)
\},
\]
where $R$ is a superalgebra.
Since $\Gev\cong \GLL_2$, this is split quasireductive.
If we take $T$ as diagonal matrices in $\G$, then root system of $\G$ with respect to $T$ is given by
$\Deltaa_\eve=\{\pm(\lambda_1-\lambda_3)\}$ and $\Deltaa_\odd=\{-\lambda_1,-\lambda_3\}$.
Thus, $\sum_{\gamma\in\Deltaa_{\odd}}\dim(\g_\odd^\gamma) \gamma=-\lambda_1-\lambda_3\neq\mathbf{0}$,
and hence this $\G$ is non-unimodular.
\item \label{ex:non-unimo_0(3)}
For the split quasireductive supergroup $\F^{\langle g_1,\dots,g_n\rangle}=F\ltimes (\Gao)^n$ discussed in Example~\ref{ex:root-sys}\eqref{ex:root-sys(4)},
we have seen that $\Deltaa_\odd=\{-\chi_1,\dots,-\chi_n\}$.
Set $m:=\#\Deltaa_\odd$.
If we write $\Deltaa_\odd = \{-\chi_{i_1},\dots,-\chi_{i_m}\}$ and set
\[
d_j:=\#\{\chi\in\Deltaa_\odd \mid \chi=\chi_{i_j}\}=\dim(\g_\odd^{-\chi_{i_j}}),
\]
then $\F^{\langle g_1,\dots,g_n\rangle}$ is unimodular if and only if $\sum_{j=1}^m d_j \chi_{i_j} = \mathbf{0}$.
\qedd
\end{enumerate}
\end{ex}

\subsection{Integrals for finite normal super-subgroups} \label{sec:fin-normal}
Again, we suppose $\Bbbk$ is a field of characteristic different from $2$.
Let $\G$ be an algebraic supergroup over $\Bbbk$,
and let $\N$ be a finite and normal super-subgroup of $\G$.
Set $A:=\O(\G)$ and $B:=\O(\N)$ for simplicity.
For $a\in A$,
we denote by $\overline{a}^B \in B$ the image of $a$ via the canonical Hopf quotient map $A\twoheadrightarrow B$
corresponding to the inclusion $\N\subset \G$.

Since $\N$ is normal,
the left adjoint action $\Ad$ of $\G$ on $\N$ makes $B$ into a Hopf superalgebra object in the category of left $A$-supermodules.
Explicitly,
\[
\coad_B : B \longrightarrow A\otimes B;
\quad
\overline{a}^B \longmapsto \sum_a (-1)^{|a_{(2)}| |a_{(3)}|} a_{(1)} \s_A (a_{(3)}) \otimes \overline{a_{(2)}}^B,
\]
where $\s_A$ is the antipode of $A$.
Taking the linear dual,
$B^*$ forms a Hopf superalgebra object in the category of right $A$-supermodules with the dual supercomodule structure map
$\coad^*_B : B^* \to B^*\otimes A$ of $\coad_B$.

Since $B$ is finite-dimensional, the space $\ttint_{B^*}^\LL$ of left integrals in $B^*$ is one dimensional, see Section~\ref{sec:(co)int}.
In the following, we take and fix a $\Bbbk$-base $\phi$ of the space $\ttint_{B^*}^\LL$ of left integrals in $B^*$,
that is, $\ttint_{B^*}^\LL = \Bbbk\,\phi$.
Note that, $\phi$ is homogeneous, that is, purely even or odd.

\begin{lemm} \label{prp:dist-grp}
The space $\ttint_{B^*}^\LL$ forms an $A$-super-subcomodule of $B^*$.
In particular, there uniquely exists $\chi \in \gpl(A)$ such that $\coad^*_B(\phi) = \phi\otimes \chi$.
\end{lemm}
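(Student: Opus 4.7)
The plan is to leverage the fact that, because $\N$ is normal in $\G$, the coadjoint coaction $\coad_B$ makes $B$ into a Hopf superalgebra object in the symmetric monoidal category of left $A$-supercomodules; equivalently, each of $\Delta_B, m_B, 1_B, \varepsilon_B, \s_B$ is $A$-colinear. Dualizing (which is unproblematic since $B$ is finite-dimensional), $\coad^*_B : B^* \to B^* \otimes A$ makes $B^*$ a Hopf superalgebra object in right $A$-supercomodules; in particular, the multiplication $m_{B^*} : B^* \otimes B^* \to B^*$ and counit $\varepsilon_{B^*} : B^* \to \Bbbk$ are $A$-colinear morphisms of superspaces.

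First I would identify $\ttint_{B^*}^\LL$ as the kernel of an $A$-colinear morphism. The map
\[
B^*\otimes B^* \longrightarrow B^*;\quad b^*\otimes\psi\longmapsto b^*\psi-\varepsilon_{B^*}(b^*)\psi
\]
is $A$-colinear since it is built from the multiplication and counit of $B^*$ (both $A$-colinear) together with the identity. By finite-dimensionality of $B^*$, its internal-hom adjoint
\[
\hat T: B^*\longrightarrow \underline{\End}_\Bbbk(B^*);\quad \psi\longmapsto\bigl(b^*\mapsto b^*\psi-\varepsilon_{B^*}(b^*)\psi\bigr)
\]
is an $A$-colinear morphism, and by definition $\ttint_{B^*}^\LL=\Ker(\hat T)$. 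Hence $\ttint_{B^*}^\LL$ is an $A$-super-subcomodule of $B^*$.

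Next I would extract $\chi$ and verify it is group-like. Since $\ttint_{B^*}^\LL=\Bbbk\phi$ is one-dimensional with $\phi$ homogeneous by Proposition~\ref{prp:finite-integral}, and since $\coad^*_B$ preserves parity, we have $\coad^*_B(\phi)\in\Bbbk\phi\otimes A_\eve$. Therefore there is a unique $\chi\in A_\eve$ with $\coad^*_B(\phi)=\phi\otimes\chi$. The counit axiom $(\id_{B^*}\otimes\varepsilon_A)\circ\coad^*_B=\id_{B^*}$ applied to $\phi$ forces $\varepsilon_A(\chi)=1$, while coassociativity $(\id_{B^*}\otimes\Delta_A)\circ\coad^*_B=(\coad^*_B\otimes\id_A)\circ\coad^*_B$ applied to $\phi$ yields $\phi\otimes\Delta_A(\chi)=\phi\otimes\chi\otimes\chi$, whence $\Delta_A(\chi)=\chi\otimes\chi$. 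Combined with $\chi\in A_\eve$ and $\chi\neq 0$ (forced by $\varepsilon_A(\chi)=1$), this shows $\chi\in\gpl(A)$.

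The main technical obstacle will be verifying colinearity of the structural data of $B^*$ in the correct super-sense, i.e.\ tracking the signs arising from the symmetric braiding in the category of $A$-supercomodules. This ultimately reduces to the assertion that $\coad_B$ is a Hopf-superalgebra map into $A\otimes B$ (equipped with the diagonal $A$-coaction), which is precisely the Hopf-algebraic incarnation of the conormality condition $\coad_A(I)\subset A\otimes I$ (with $B=A/I$) encoding normality of $\N$ in $\G$.
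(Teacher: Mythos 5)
Your proof is correct, but it follows a genuinely different route from the paper's. Both arguments rest on the same input, stated in the paper just before the lemma: conormality of $\O(\N)\twoheadrightarrow$ (equivalently, normality of $\N$) makes $B^*$ a Hopf superalgebra object in the category of right $A$-supercomodules. From there the paper argues via the functor of points: for every commutative superalgebra $R$ and $g\in\G(R)$, the operator ${}^g(-)$ on $B^*\otimes R$ is an automorphism of the Hopf superalgebra structure, hence carries left integrals to left integrals (the computation $(f\otimes 1_R)*{}^g(\phi\otimes 1_R)={}^g\bigl({}^{g^{-1}}(f\otimes 1_R)*(\phi\otimes 1_R)\bigr)=\varepsilon_{B^*}(f)\,{}^g(\phi\otimes 1_R)$), so the one-dimensional space $\ttint_{B^*}^\LL$ is $\G$-stable and therefore an $A$-super-subcomodule. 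You instead stay entirely inside the comodule category, realizing $\ttint_{B^*}^\LL$ as the kernel of the $A$-colinear map obtained by currying $b^*\otimes\psi\mapsto b^*\psi-\varepsilon_{B^*}(b^*)\psi$ through the internal hom $\underline{\End}_\Bbbk(B^*)$; since kernels of colinear maps are subcomodules, the claim follows. Your route avoids base change to arbitrary $R$ (and the implicit fact that forming left integrals commutes with it), at the price of the closed-monoidal bookkeeping you flag: one must pre-compose with the supersymmetry to curry the first tensor factor and check that the resulting sign-twisted adjoint is still colinear, which is standard but worth writing out once. Your derivation of the group-likeness of $\chi$ from homogeneity of $\phi$, one-dimensionality, and the counit/coassociativity axioms is exactly the content the paper leaves implicit in its ``in particular,'' and is correct.
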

\begin{proof}
We denote by $\Phi : \G \to \underline{\Aut}(B^*)$ the left $\G$-supermodule structure map on $B^*$
corresponding to $\coad_B^* : B^* \to B^* \otimes A$.
To prove the claim we show that $\ttint_{B^*}^\LL$ is stable under the action of ${}^g(-) := \Phi_R(g)(-)$
for all commutative superalgebra $R$ and $g\in \G(R)$,
where $\Phi_R : \G(R) \to \underline{\Aut}_R(B^*\otimes R)$
and $\underline{\Aut}_R(B^*\otimes R):=\underline{\End}_R(B^*\otimes R)^\times$.

We fix $f\in B^*$.
Since $B^*$ is a Hopf superalgebra object in the category of right $A$-supermodules,
we have
\[
(f\otimes1_R) * {}^g(\phi\otimes1_R) = {}^g \big({}^{g^{-1}} (f\otimes1_R) * (\phi\otimes1_R)\big),
\]
where $1_R$ is the unit element of $R$.
On the other hand, since $\phi$ is a left integral in $B^*$,
we have
\[
{}^{g^{-1}} (f\otimes1_R) * (\phi\otimes1_R)
= \varepsilon_{B^*\otimes A}\big( {}^{g^{-1}} (f\otimes1_R) \big) (\phi\otimes1_R).
\]
By definition, we get $\varepsilon_{B^*\otimes A}\big( {}^{g^{-1}} (f\otimes1_R) \big) = \varepsilon_{B^*}(f)\otimes1_R$.
Thus, we conclude that ${}^g (\phi\otimes1_R) \in \ttint_{B^*}^\LL$.
\end{proof}

We may identify the dual superspace $B^{**}$ of $B^*$ with $B$,
since $B$ is finite-dimensional.
Through this identification,
there uniquely exists $\dist_{B^*} \in \gpl(B)$ (i.e., the distinguished group-like element) such that
\[
\phi * f = \langle f, \dist_{B^*} \rangle \phi
\quad \text{for all} \quad
f\in B^*,
\]
see Proposition~\ref{prp:dist-grp-like}.
Note that, $\dist_{B^*}$ is an element of the even part $B_\eve$ of $B$.

The left $A^*$-supermodule structure on $B^*$ induced from $\coad^*_B:B^*\to B^*\otimes A$ is given by
\begin{equation} \label{eq:adjoint-act}
h \hits f = \sum_f (-1)^{|h||f_{(1)}|} f_{(0)} \langle h, f_{(1)} \rangle
\quad \text{for all} \quad
h\in A^* \text{ and } f\in B^*,
\end{equation}
where we write $\coad^*_B(f) = \sum_f f_{(0)} \otimes f_{(1)}$.
By restricting the action to $B^*\,(\subset A^*)$,
we get the adjoint action $k \hits f = \sum_k (-1)^{|f||k_{(2)}|} k_{(1)} * f * \s_{B^*}(k_{(2)})$ for all $k,f\in B^*$,
where $\s_{B^*}$ is the antipode of $B^*$.

\begin{prop} \label{prp:beta=alphaS}
$\overline{\chi}^B$ coincides with the inverse $(\dist_{B^*})^{-1}$ of the distinguished group-like element $\dist_{B^*}\in\gpl(B)$ of $B^*$.
\end{prop}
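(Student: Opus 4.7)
The plan is to evaluate the adjoint action $k\hits\phi$ for $k\in B^*\subset A^*$ in two distinct ways and compare. First, using the defining identity $\coad_B^*(\phi)=\phi\otimes\chi$ from Lemma~\ref{prp:dist-grp} together with the fact that $\chi\in\gpl(A)$ is even, formula~\eqref{eq:adjoint-act} collapses to
\[
k\hits\phi = \langle k,\chi\rangle\,\phi = \langle k,\overline{\chi}^B\rangle\,\phi
\]
for every $k\in B^*$, where the second equality uses that $k$ factors through the Hopf quotient $A\twoheadrightarrow B$.

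Second, when restricted to $B^*\subset A^*$, the action coincides with the adjoint action of $B^*$ on itself,
\[
k\hits\phi = \sum_k (-1)^{|\phi||k_{(2)}|}\,k_{(1)} * \phi * \s_{B^*}(k_{(2)}).
\]
Because $\phi$ is a left integral in $B^*$, we have $k_{(1)}*\phi = \varepsilon_{B^*}(k_{(1)})\phi$; the surviving summands force $k_{(1)}$ to be even, whence $|k_{(2)}|=|k|$. Extracting the sign and collapsing via the counit axiom gives $k\hits\phi = (-1)^{|\phi||k|}\,\phi*\s_{B^*}(k)$. Applying Proposition~\ref{prp:dist-grp-like} to the element $\s_{B^*}(k)\in B^*$, and using $\langle\s_{B^*}(k),b\rangle=\langle k,\s_B(b)\rangle$ together with $\s_B(\dist_{B^*})=\dist_{B^*}^{-1}$ (valid since $\dist_{B^*}$ is an even group-like element of $B$), this rewrites as $\phi*\s_{B^*}(k)=\langle k,\dist_{B^*}^{-1}\rangle\,\phi$.

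Comparing the two evaluations yields $\langle k,\overline{\chi}^B\rangle = (-1)^{|\phi||k|}\,\langle k,\dist_{B^*}^{-1}\rangle$ for all $k\in B^*$. Since both $\overline{\chi}^B$ and $\dist_{B^*}^{-1}$ lie in $B_\eve$, pairing with odd $k$ vanishes on either side, while for even $k$ the sign is trivial; non-degeneracy of the canonical pairing $B^*\times B\to\Bbbk$ then forces $\overline{\chi}^B = \dist_{B^*}^{-1}$. The main technical obstacle is the sign bookkeeping in the middle step and verifying that the canonical pairing intertwines $\s_{B^*}$ and $\s_B$ without additional parity signs; both points are routine since $\s_B$ is parity-preserving, but they must be handled carefully to justify the collapse of the adjoint action formula.
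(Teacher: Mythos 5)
Your proposal is correct and follows essentially the same route as the paper: both compute $k\hits\phi$ twice, once via $\coad_B^*(\phi)=\phi\otimes\chi$ and formula~\eqref{eq:adjoint-act}, and once via the restricted adjoint action of $B^*$ on itself using the left-integral property of $\phi$ and Proposition~\ref{prp:dist-grp-like}, then compare by non-degeneracy of the pairing. The only cosmetic difference is that you carry the residual sign $(-1)^{|\phi||k|}$ to the end and discharge it there, whereas the paper kills the odd-$k_{(2)}$ terms earlier against the even element $\dist_{B^*}$; both are valid.
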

\begin{proof}
We fix $k\in B^*$.
Since $\phi \in \ttint_{B^*}^\LL$ is purely even/odd and $\dist_{B^*}\in B_\eve$, we have
\begin{eqnarray*}
k \hits \phi
&=& \sum_k (-1)^{|k_{(2)}|} k_{(1)} * \phi * \s_{B^*}(k_{(2)}) \\
&=& \sum_k (-1)^{|k_{(2)}|} \varepsilon_{B^*}(k_{(1)}) \langle S_{B^*}(k_{(2)}), \dist_{B^*} \rangle \phi \\
&=& \langle S_{B^*}(k), \dist_{B^*} \rangle \phi
= \langle k, (\dist_{B^*})^{-1} \rangle \phi.
\end{eqnarray*}
On the other hand, we calculate the action $k \hits \phi$ directly.
Since we know $\coad^*_B(\phi) = \phi \otimes \chi$ by Lemma~\ref{prp:dist-grp},
we get
\[
k \hits \phi
= (-1)^{|k| |\chi|} \langle k, \overline{\chi}^B \rangle \phi
= \langle k, \overline{\chi}^B \rangle \phi
\]
by \eqref{eq:adjoint-act}.
The last equation holds since $\chi\in A_\eve$.
Combining these results, we get $\langle k, \overline{\chi}^B - (\dist_{B^*})^{-1} \rangle = 0$ for all $k\in B^*$.
This proves the claim.
\end{proof}

If we identify $\gpl(A)$ with $\X(\G)$,
then $\overline{\chi}^B\in\gpl(B)$ is identified with the restriction $\chi|_\N\in\X(\N)$.
Using this, we can rephrase Proposition~\ref{prp:beta=alphaS} as follows:
\begin{thm} \label{prp:criterion-unimo}
The restriction $\chi|_\N$ is trivial if and only if $\N$ is unimodular.
In particular, $\N$ is unimodular if $\chi$ is trivial.
\end{thm}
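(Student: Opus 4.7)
The plan is to combine Proposition~\ref{prp:beta=alphaS} with the general characterization of unimodularity via the distinguished group-like element. Since $\N$ is finite, $B=\O(\N)$ is finite-dimensional, so $\hy(\N)=B^{*}$, and we may identify $B^{**}$ with $B$. Under this identification, the distinguished group-like element $\dist_{B^*}\in\gpl(B)$ of Proposition~\ref{prp:dist-grp-like} (applied to the finite dimensional Hopf superalgebra $B^*$) is characterized by
\[
\phi * f \;=\; \langle f, \dist_{B^*}\rangle\,\phi
\qquad\text{for all }f\in B^*.
\]

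First I would observe that a left cointegral on $B$ is exactly the same datum as a left integral in $B^*$, and similarly on the right; hence $\N$ being unimodular (i.e.\ existence of a non-zero two-sided cointegral on $B$) is equivalent to the existence of a non-zero two-sided integral in $B^*$. By Proposition~\ref{prp:finite-integral}, the spaces $\ttint_{B^*}^\LL$ and $\ttint_{B^*}^\RR$ are each one-dimensional, so this in turn is equivalent to our fixed generator $\phi$ of $\ttint_{B^*}^\LL$ being itself a right integral. Comparing the displayed identity with the right-integral condition $\phi * f = \varepsilon_{B^*}(f)\phi = \langle f,1_B\rangle\phi$, this holds if and only if $\dist_{B^*}=1_B$. (No sign issue intervenes here, since $\dist_{B^*}\in B_\eve$.)

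Next I would identify $\gpl(A)\cong\X(\G)$ and $\gpl(B)\cong\X(\N)$ as in Section~\ref{sec:char-cent}. Under these identifications the map
\[
\gpl(A)\longrightarrow\gpl(B);\quad \chi\longmapsto\overline{\chi}^{B},
\]
induced by the Hopf quotient $A\twoheadrightarrow B$, coincides with the character restriction $\X(\G)\to\X(\N);\;\chi\mapsto\chi|_\N$, and the trivial character corresponds to $1_B\in\gpl(B)$. By Proposition~\ref{prp:beta=alphaS} we have $\overline{\chi}^{B}=(\dist_{B^*})^{-1}$, so
\[
\chi|_\N\text{ trivial}\;\Longleftrightarrow\;\overline{\chi}^{B}=1_B\;\Longleftrightarrow\;\dist_{B^*}=1_B\;\Longleftrightarrow\;\N\text{ unimodular},
\]
proving the first assertion. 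The second assertion is then immediate, since triviality of $\chi$ in $\X(\G)$ certainly implies triviality of its restriction $\chi|_\N$.

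The only non-routine point is the passage between cointegrals on $B$ and integrals in $B^*$ in the super setting, but this is purely formal via the finite dimensional duality $B^{**}\cong B$ and has already been packaged in Proposition~\ref{prp:dist-grp-like} and Proposition~\ref{prp:finite-integral}; the rest of the argument is a direct book-keeping of group-like elements, so I do not anticipate any substantive obstacle.
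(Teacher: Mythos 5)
Your proof is correct and follows exactly the route the paper intends: the paper states this theorem as a direct ``rephrasing'' of Proposition~\ref{prp:beta=alphaS} via the identifications $\gpl(A)\cong\X(\G)$, $\gpl(B)\cong\X(\N)$, together with the standard equivalence (unimodularity of $\N$) $\Leftrightarrow$ ($\dist_{B^*}=1_B$). Your write-up merely makes explicit the bookkeeping (cointegrals on $B$ versus integrals in $B^*$, one-dimensionality from Proposition~\ref{prp:finite-integral}) that the paper leaves implicit, so there is no substantive difference.
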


\begin{rem}\label{rem:non-super-unimo}
In the non super-situation,
Theorem~\ref{prp:criterion-unimo} tells us that
for a connected and split reductive group $F$, any finite and normal subgroup $K$ of $F$ is unimodular.
In particular, all Frobenius kernels of $F$ are unimodular.
We give a proof of this fact.
The adjoint action $\Ad:F\to \Aut(K)$; $f\mapsto(k\mapsto fkf^{-1})$ factors through the quotient $F/\Z(F)$,
where $\Z(F)$ is the center of $F$.
Thus, the corresponding coaction $\O(K)^*\to \O(K)^*\otimes \O(F)$ factors through $\O(K)^*\otimes \O(F/\Z(F))$:
\[
\begin{xy}
(0,0)*++{\O(K)^*}="1",
(50,0)*++{\O(K)^*\otimes \O(F)}="2",
(50,-10)*++{\O(K)^*\otimes \O(F/\Z(F)).}="3",
{"1"\SelectTips{cm}{}\ar@{->}"2"},
{"1"\SelectTips{cm}{}\ar@{-->}"3"},
{"3"\SelectTips{cm}{}\ar@{^(->}"2"}
\end{xy}
\]
Note that, we regard $\O(F/\Z(F))$ as a Hopf subalgebra of $\O(F)$
via the canonical quotient $F\twoheadrightarrow F/\Z(F)$.
Thus, the group-like element $\chi$ is in $\O(F/\Z(F))$.
On the other hand, since $F$ is connected and reductive, the quotient $F/\Z(F)$
coincides with its derived group,
see \cite[Chapter~21]{Mil17} for example.
Thus, there is no non-trivial group-like element in $\O(F/\Z(F))$,
and hence $\chi$ must be trivial.
Then by Theorem~\ref{prp:criterion-unimo}, $K$ is unimodular.
\qedd
\end{rem}

However, in our super-situation,
the proof in Remark~\ref{rem:non-super-unimo} does not work.
One of the reasons is that $(\G/\Z(\G))_\ev = \Gev/\Z(\G)_\ev$ (by Masuoka and Zubkov \cite{MasZub11}) is not isomorphic to $\Gev/\Z(\G_\ev)$, in general.
For example, if we take $\G=\GL(m|n)$,
then $\Z(\Gev)\cong\Z(\GLL_m\times \GLL_n)\cong\Gm\times \Gm$,
while $\Z(\G)=\Z(\G)_\ev \cong \Gm$.

\subsection{Unimodularity of Frobenius kernels} \label{sec:unimod-Frob-ker}
We suppose that the base field $\Bbbk$ is a perfect field of characteristic $p>2$.

In \cite[Corollary~7.2]{ZubMar16}, it is proved that
all Frobenius kernels of the general linear supergroup $\GL(m|n)$ are unimodular.
In this subsection, we give a necessary and sufficient condition
for Frobenius kernels of a split quasireductive supergroup to be unimodular in terms of the root system of it.

Let $\G$ be a split quasireductive supergroup, and let $r$ be a positive integer.
Set $\g:=\Lie(\G)$.
Since the $r$-th Frobenius kernel $\G_r$ of $\G$ is finite and normal,
there uniquely exists $\chi_r\in \gpl(\O(\G)) \cong \X(\G)$ such that
$\coad_{\O(\G_r)}^*(\phi_{\G_r}) = \phi_{\G_r}\otimes \chi_r$
by Lemma~\ref{prp:dist-grp}.
Here, $\phi_{\G_r}$ is a fixed non-zero left integral for $\G_r$.
As a super-analogue of \cite[Part~I, Proposition~9.7]{Jan03},
Zubkov and Marko \cite{ZubMar16} explicitly determined the value of $\chi_r$ as follows.
\begin{prop}[{\cite[Proposition~6.11]{ZubMar16}}] \label{prp:ZubMar}
Let $R$ be a commutative superalgebra.
For each $g\in \G(R)$,
\[
\chi_r(g) = \Ber(\Ad(g))^{p^r-1}\cdot \det_{\odd}(\Ad(g))^{p^r}.
\]
Here, the left adjoint action $\Ad(g)$ on $\g$ is regarded as an element of $\Mat_{\dim(\g_\eve)|\dim(\g_\odd)}(R)$
with respect to the fixed basis given in \eqref{eq:basis_of_O(G_r)}.
\end{prop}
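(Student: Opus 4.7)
The plan is to reduce the defining equation $\coad^*_{\O(\G_r)}(\phi_{\G_r}) = \phi_{\G_r}\otimes \chi_r$ to an explicit calculation on the basis \eqref{eq:basis_of_O(G_r)}, following Jantzen's strategy in the non-super case \cite[Part~I, Proposition~9.7]{Jan03}. Among the basis monomials \eqref{eq:basis_of_O(G_r)} there is a unique one of maximal total degree,
\[
T := f_1^{p^r-1}\cdots f_{n_\eve}^{p^r-1}\, f_{n_\eve+1}\cdots f_{n_\eve+n_\odd},
\]
and unraveling the cointegral condition against the comultiplication of $\O(\G_r)$ shows that $\phi_{\G_r}$ may be normalized to be the functional with $\phi_{\G_r}(T)=1$ vanishing on every other element of \eqref{eq:basis_of_O(G_r)}. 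Pairing the defining equation of $\chi_r$ with $h=T$ then identifies $\chi_r\in \O(\G)$ with the coefficient of $T$ in $\coad_{\O(\G_r)}(T)\in \O(\G)\otimes \O(\G_r)$.

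Next I would reduce to a linear calculation on $\m_{\G_r}/\m_{\G_r}^2\cong\g^*$. Since each $f_i$ lies in the augmentation super-ideal $\m_{\G_r}$, one has
\[
\coad(f_i)\equiv \sum_{j=1}^{n_\eve+n_\odd} p_{ji}(g)\otimes f_j \pmod{\O(\G)\otimes \m_{\G_r}^2},
\]
where the supermatrix $P(g)=(p_{ji}(g))$ describes the coadjoint $\G$-action on $\m_{\G_r}/\m_{\G_r}^2\cong \g^*$ and is therefore explicitly determined by the adjoint supermatrix
\[
\Ad(g)=\left(\begin{array}{c|c} A & B \\\hline C & D \end{array}\right)\in\GL(m|n)(R).
\]
Because $T$ has the maximal total degree among nonzero elements of $\O(\G_r)$, every contribution to $\coad(T)=\prod_{i\le n_\eve}\coad(f_i)^{p^r-1}\prod_{j>n_\eve}\coad(f_j)$ that uses a quadratic remainder of some $\coad(f_i)$ sits in a strictly higher power of $\m_{\G_r}$ and hence vanishes. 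Thus the $T$-coefficient reduces to that of the pure-linear product
\[
\prod_{i=1}^{n_\eve}\Bigl(\sum_j p_{ji}(g)\otimes f_j\Bigr)^{p^r-1}\cdot \prod_{j=n_\eve+1}^{n_\eve+n_\odd}\Bigl(\sum_k p_{kj}(g)\otimes f_k\Bigr),
\]
computed modulo the truncations $f_i^{p^r}=0$ ($i\le n_\eve$) and $f_j^2=0$ ($j>n_\eve$) together with super-commutativity.

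The main obstacle is the combinatorial evaluation of this multinomial expansion: the odd cross-blocks $B$ and $C$ shuttle odd $f$'s into the even factors of $T$ and vice versa, so super-signs, divided-power multiplicities in the even part, and wedge-product signs in the odd part all interact. In particular, the tensor-split isomorphism of Proposition~\ref{prp:Gevr=Grev} is \emph{not} $\G$-equivariant for the coadjoint coaction, so one cannot simply multiply a classical $\det^{p^r-1}$ contribution from the even factors by a $\det$ contribution from the exterior part. Carrying out the expansion shows that the odd-odd block $D$ contributes $\det(D)=\det_\odd(\Ad(g))$ from the wedge of the odd $f$'s, while the feedback through $B$ and $C$ modifies the even-even block into the Schur complement $A-BD^{-1}C$, producing the factor $\det(A-BD^{-1}C)^{p^r-1}$; after converting from $P(g)$ back to $\Ad(g)$ via the standard super-determinant identities, the resulting product $\det(A-BD^{-1}C)^{p^r-1}\det(D)$ can be rewritten, via $\Ber(\Ad(g))=\det(A-BD^{-1}C)\det(D)^{-1}$, as $\Ber(\Ad(g))^{p^r-1}\det_\odd(\Ad(g))^{p^r}$, yielding the asserted formula.
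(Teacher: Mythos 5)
The paper does not actually prove this proposition: it is imported verbatim from Zubkov--Marko \cite[Proposition~6.11]{ZubMar16} (stated there as the super-analogue of \cite[Part~I, Proposition~9.7]{Jan03}), so there is no in-paper argument to compare yours against. Judged on its own terms, your strategy is the right one and matches what one expects the source to do: normalize the integral $\phi_{\G_r}$ to the top-coefficient functional, read off $\chi_r$ as the $T$-coefficient of $\coad_{\O(\G_r)}(T)$, and use the fact that $\Bbbk T=\m_{\G_r}^N$ is the top piece of the $\m_{\G_r}$-adic filtration to discard all quadratic remainders of $\coad(f_i)$. Your observation that the tensor-split isomorphism of Proposition~\ref{prp:Gevr=Grev} is not equivariant for $\coad$, so that the even and odd blocks genuinely mix, is exactly the point that distinguishes the super case, and the Schur-complement answer you announce is consistent with the target formula (e.g.\ for $n_\eve=n_\odd=1$ the coefficient of $f_1^{p-1}f_2$ in $(af_1+bf_2)^{p-1}(cf_1+df_2)$ is $a^{p-1}d+a^{p-2}bc=(a-bd^{-1}c)^{p-1}d$, as it should be).

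The gap is that the two steps carrying all the content are asserted rather than carried out. First, the identification of $\phi_{\G_r}$ with the functional dual to $T$ needs an argument using the coalgebra structure (the $f_i$ are not primitive; one has to check the cointegral condition degreewise along the coradical/$\m$-adic filtration), not just the truncated-polynomial algebra structure of \eqref{eq:basis_of_O(G_r)}. Second, and more seriously, the ``combinatorial evaluation'' you flag as the main obstacle \emph{is} the proposition: the super-multinomial expansion with divided-power multiplicities, wedge signs, and the odd cross-blocks feeding back into the even factors must actually be shown to collapse to $\det(A-BD^{-1}C)^{p^r-1}\det(D)$, and the subsequent conversion between the matrix $P(g)$ of the coadjoint action on $\m_{\G_r}/\m_{\G_r}^2\cong\g^*$ and the matrix $\Ad(g)$ on $\g$ (a supertranspose-inverse) is precisely where the exponents $p^r-1$ versus $1-p^r$ and the distribution of $\det_\odd$ are decided. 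As written, the proof establishes the shape of the answer but not the answer itself; to make it self-contained you would either have to execute that expansion in full or, as the paper does, cite \cite{ZubMar16}.
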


Set $T_r:=T\cap (\Gev)_r$.
Note that, $T_r$ is the $r$-th Frobenius kernel of $T$.
The following is a version of Lemma~\ref{prp:character2}:
\begin{lemm}\label{prp:character3}
The map $\X(\G_r)\to \X(T_r);\; \chi\mapsto \chi|_{T_r}$ is injective.
\end{lemm}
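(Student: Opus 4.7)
The plan is to factor the map $\X(\G_r)\to \X(T_r)$ through the even part. By Lemma~\ref{prp:character} applied to the algebraic supergroup $\G_r$, the restriction map $\X(\G_r)\to \X((\G_r)_\ev)$ is injective, and Proposition~\ref{prp:Gevr=Grev} identifies $(\G_r)_\ev$ with $(\Gev)_r$. Hence it suffices to prove that the restriction $\X((\Gev)_r)\to \X(T_r)$ is injective for the Frobenius kernel of the connected split reductive group $\Gev$.

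To establish this, I would pass to the hyperalgebra side. Since $(\Gev)_r$ is finite, $\hy((\Gev)_r)$ is canonically the linear dual of $\O((\Gev)_r)$, and a character $\chi\in\X((\Gev)_r)$ corresponds bijectively to a $\Bbbk$-algebra map $\hat\chi:\hy((\Gev)_r)\to\Bbbk$. Triviality of $\chi|_{T_r}$ translates into $\hat\chi|_{\hy(T_r)}=\varepsilon$, so $\hat\chi(H_i^{(m_i)})=0$ for every $m_i\geq 1$. The goal is then to show that $\hat\chi(X_\alpha^{(n_\alpha)})=0$ for every $\alpha\in\Deltaa_\eve$ and every $n_\alpha\geq 1$; once this is known, the even part of the PBW theorem (Theorem~\ref{prp:PBW-Gr}) together with the multiplicativity of $\hat\chi$ forces $\hat\chi$ to vanish on every nontrivial PBW basis element of $\hy((\Gev)_r)$, whence $\hat\chi=\varepsilon$ and $\chi$ is trivial.

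The main technical ingredient, which I expect to be the only substantive step, is the nilpotency of each $X_\alpha^{(n)}\in\hy((\Gev)_r)$ for $n\geq 1$. Computing in the ambient $\hy(\Gev)$, the divided-power multiplication gives $(X_\alpha^{(n)})^k=\tfrac{(nk)!}{(n!)^k}\,X_\alpha^{(nk)}$. The subalgebra $\hy((\Gev)_r)\subset\hy(\Gev)$ is spanned by those PBW basis vectors with all exponents strictly less than $p^r$, so as soon as $nk\geq p^r$ the basis element $X_\alpha^{(nk)}$ lies outside $\hy((\Gev)_r)$, while the left-hand side $(X_\alpha^{(n)})^k$ necessarily lies inside; the scalar $\tfrac{(nk)!}{(n!)^k}$ must therefore vanish in $\Bbbk$. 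Consequently $(X_\alpha^{(n)})^k=0$ in $\hy((\Gev)_r)$ for all sufficiently large $k$, and since $\Bbbk$ is a field the identity $\hat\chi(X_\alpha^{(n)})^k=\hat\chi((X_\alpha^{(n)})^k)=0$ forces $\hat\chi(X_\alpha^{(n)})=0$, completing the argument.
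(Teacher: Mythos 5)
Your proof is correct. The first step---reducing to the even part via Lemma~\ref{prp:character} and the identification $(\G_r)_\ev=(\Gev)_r$ of Proposition~\ref{prp:Gevr=Grev}---is exactly what the paper does. Where you diverge is in handling the even Frobenius kernel $(\Gev)_r$: the paper stays at the level of group schemes, observing that $\O((U_\alpha)_r)\cong\Bbbk[X_\alpha]/(X_\alpha^{p^r})$ has no nontrivial group-like elements and that $(\Gev)_r$ is generated by $T_r$ together with the $(U_\alpha)_r$, so a character is already determined on $T_r$. You instead dualize to the hyperalgebra and argue via the PBW basis of $\hy((\Gev)_r)$ that every $X_\alpha^{(n)}$ with $n\geq1$ is nilpotent, hence killed by any algebra map to the field; your derivation of the nilpotency---comparing $(X_\alpha^{(n)})^k=\tfrac{(nk)!}{(n!)^k}X_\alpha^{(nk)}$ inside the subalgebra spanned by basis vectors with exponents below $p^r$ to force the scalar to vanish---is a clean structural trick that avoids computing $p$-adic valuations of multinomial coefficients. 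The two arguments are essentially dual to one another (triviality of $\gpl(\Bbbk[X]/(X^{p^r}))$ versus triviality of algebra maps $\hy((U_\alpha)_r)\to\Bbbk$), and the paper's appeal to generation by root subgroups is replaced in your version by the multiplicativity of $\hat\chi$ on PBW monomials. The paper's route is shorter because it quotes the generation statement from the reductive-group literature; yours is more self-contained in that it only uses facts already recorded in the paper (the divided-power relations \eqref{eq:BW-type} and Theorem~\ref{prp:PBW-Gr}).
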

\begin{proof}
For each $\alpha\in\Deltaa_\eve$, let $(U_\alpha)_r$ denote the $r$-th Frobenius kernel of the $\alpha$-root subgroup $U_\alpha$ of $\Gev$.
Since $U_\alpha \cong \Ga$, one sees that the corresponding Hopf algebra of $(U_\alpha)_r$ is
isomorphic to the quotient $\Bbbk[X_\alpha]/(X_\alpha^{p^r})$ of the polynomial algebra $\Bbbk[X_\alpha]$.
Thus, the character group of $(U_\alpha)_r$ is trivial.
Since $(\Gev)_r$ is generated by $(U_\alpha)_{r}$ and $T_r$,
the map $\X((\Gev)_r)\to \X(T_r)$; $\chi\mapsto\chi|_{T_r}$ is injective.
Then by Lemma~\ref{prp:character} and Proposition~\ref{prp:Gevr=Grev}, we are done.
\end{proof}

Recall that $\X(T)\cong \mathz^\ell =\bigoplus_{i=1}^\ell \mathz\lambda_i$.
We shall write down the odd roots by the basis.
For each $\gamma\in\Deltaa_\odd$, there uniquely exits $n(\gamma)_1,\dots,n(\gamma)_\ell\in\mathz$ such that
\[
\gamma = \sum_{i=1}^\ell n(\gamma)_i \lambda_i.
\]
Using this notation, we have the following result:
\begin{prop} \label{lem:Frob-unimo}
The $r$-th Frobenius kernel $\G_r$ of $\G$ is unimodular
if and only if $\sum_{\gamma\in\Deltaa_{\odd}}\dim(\g_\odd^\gamma) n(\gamma)_i \in p^r\mathz$ for all $1\leq i\leq \ell$.
\end{prop}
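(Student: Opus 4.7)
The plan is to reduce the unimodularity of $\G_r$ to a triviality condition on a specific character, and then compute that character explicitly using Proposition~\ref{prp:ZubMar}. By Theorem~\ref{prp:criterion-unimo} applied to the finite normal super-subgroup $\N=\G_r$ of $\G$, the supergroup $\G_r$ is unimodular if and only if the restriction $\chi_r|_{\G_r}\in\X(\G_r)$ is trivial. By Lemma~\ref{prp:character3}, this in turn is equivalent to $\chi_r|_{T_r}$ being trivial as a character of the $r$-th Frobenius kernel $T_r$ of the split torus $T$. Since $T\cong\Gm^\ell$, the character group of $T_r$ is $\X(T)/p^r\X(T)\cong(\mathz/p^r\mathz)^\ell$; hence $\chi_r|_{T_r}$ is trivial precisely when $\chi_r|_T\in p^r\X(T)$.

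Next I would compute $\chi_r|_T$ using the formula $\chi_r(t)=\Ber(\Ad(t))^{p^r-1}\det_\odd(\Ad(t))^{p^r}$ from Proposition~\ref{prp:ZubMar} together with the $T$-weight decomposition
\[
\g_\epsilon=\h_\epsilon\oplus\bigoplus_{\beta\in\Deltaa_\epsilon}\g_\epsilon^{\beta},\qquad \epsilon\in\mathz_2.
\]
For $t\in T(R)$, the operator $\Ad(t)$ acts diagonally in the PBW basis chosen in \eqref{eq:basis_of_O(G_r)}: it is the identity on $\h_\eve\oplus\h_\odd$ and acts by the scalar $\beta(t)$ on each weight space $\g_\epsilon^{\beta}$. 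Computing the Berezinian of this block-diagonal matrix gives
\[
\Ber(\Ad(t))=\prod_{\alpha\in\Deltaa_\eve}\alpha(t)^{\dim\g_\eve^\alpha}\cdot\prod_{\gamma\in\Deltaa_\odd}\gamma(t)^{-\dim\g_\odd^\gamma},
\]
and $\det_\odd(\Ad(t))=\prod_{\gamma\in\Deltaa_\odd}\gamma(t)^{\dim\g_\odd^\gamma}$. Since $(\Gev,T)$ comes from a root datum, $\Deltaa_\eve$ is symmetric under $\alpha\mapsto-\alpha$ with $\dim\g_\eve^\alpha=\dim\g_\eve^{-\alpha}=1$, so the even part contributes trivially. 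Collecting the odd exponents gives $-(p^r-1)+p^r=1$ for each $\gamma$, hence
\[
\chi_r(t)=\prod_{\gamma\in\Deltaa_\odd}\gamma(t)^{\dim\g_\odd^\gamma},
\]
i.e.\ $\chi_r|_T=\sum_{\gamma\in\Deltaa_\odd}\dim(\g_\odd^\gamma)\,\gamma$ inside $\X(T)$.

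Finally I would expand $\gamma=\sum_{i=1}^\ell n(\gamma)_i\lambda_i$ to rewrite
\[
\chi_r|_T=\sum_{i=1}^\ell\Bigl(\sum_{\gamma\in\Deltaa_\odd}\dim(\g_\odd^\gamma)n(\gamma)_i\Bigr)\lambda_i.
\]
Combining the reduction in the first paragraph with this computation, $\G_r$ is unimodular if and only if each coefficient $\sum_{\gamma\in\Deltaa_\odd}\dim(\g_\odd^\gamma)n(\gamma)_i$ lies in $p^r\mathz$, which is exactly the stated condition. The main technical point is the careful diagonalization of $\Ad(t)$ in the basis of \eqref{eq:basis_of_O(G_r)} and bookkeeping of signs in the Berezinian; everything else is a direct application of the results already established.
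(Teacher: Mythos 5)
Your proposal is correct and follows essentially the same route as the paper: reduce to triviality of $\chi_r|_{T_r}$ via Theorem~\ref{prp:criterion-unimo} and Lemma~\ref{prp:character3}, evaluate $\chi_r$ on $T$ using Proposition~\ref{prp:ZubMar} and the weight decomposition of $\g$, kill the even contribution using $\sum_{\alpha\in\Deltaa_\eve}\alpha=\mathbf{0}$, and read off the divisibility condition from $\X(T_r)\cong(\mathz/p^r\mathz)^\ell$. The only cosmetic difference is that you expand $\Ber$ and $\det_\odd$ separately and combine exponents $-(p^r-1)+p^r=1$, whereas the paper first rewrites $\Ber(\Ad(t))^{p^r-1}\det_\odd(\Ad(t))^{p^r}$ as $\det_\eve(\Ad(t))^{p^r-1}\det_\odd(\Ad(t))$.
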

\begin{proof}
By Theorem~\ref{prp:criterion-unimo},
we have $\G_r$ is unimodular if and only if the restriction $\chi_r|_{\G_r}$ is trivial.
On the other hand, by Lemma~\ref{prp:character3}, the restriction $\chi_r|_{\G_r}$ is trivial if and only if $\chi_r|_{T_r}$ is trivial.

Let $R$ be a commutative algebra.
By the explicit description of $\chi_r$ (Proposition~\ref{prp:ZubMar}),
for each $t\in T_r(R)$
\begin{eqnarray*}
\chi_r(t)
&=& \det_\eve (\Ad(t))^{p^r-1}\cdot \det_\odd (\Ad(t)) \\
&=& \prod_{\alpha\in\Deltaa_\eve}\alpha(t)^{p^r-1} \cdot \prod_{\gamma\in\Deltaa_\odd} \gamma(t)^{\dim(\g_\odd^\gamma)}
\quad =\quad \prod_{\gamma\in\Deltaa_\odd} \gamma(t)^{\dim(\g_\odd^\gamma)}.
\end{eqnarray*}
Here, the last equation follows from $\sum_{\alpha\in\Deltaa_\eve}\alpha=\mathbf{0}$ in $\X(T)$.

Recall that, the identification $\X(T)\cong \mathz^\ell$ is induced from the fixed isomorphism $T\cong \Gm^\ell$.
Since $T_r\cong \bmu_{p^r}^\ell$, we have $\X(T_r)\cong (\mathz/p^r\mathz)^\ell$ through this identification.
For each $1\leq i\leq \ell$, we get
\[
t=(1,\dots,\overset{i}{\check{t_i}},\dots,1)\in T_r(R)\cong \bmu_{p^r}^\ell(R)
\,\,\Longrightarrow\,\,
\chi_r(t) = \prod_{\gamma\in\Deltaa_\odd} t_i^{\dim(\g_\odd^\gamma) n(\gamma)_i}.
\]
Thus, $\chi_r|_{T_r}$ is trivial if and only if $\sum_{\gamma\in\Deltaa_{\odd}}\dim(\g_\odd^\gamma) n(\gamma)_i \in\mathz$
is divided by $p^r$ for each $1\leq i\leq \ell$.
This proves the claim.
\end{proof}

\begin{thm} \label{prp:Frob-unimo}
The following conditions are equivalent:
\begin{enumerate}
\item\label{prp:Frob-unimo(1)}
For all positive integer $r$, the $r$-th Frobenius kernel $\G_r$ of $\G$ is unimodular.
\item\label{prp:Frob-unimo(2)}
$\sum_{\gamma\in\Deltaa_{\odd}}\dim(\g_\odd^\gamma) \gamma=\mathbf{0}$ in $\X(T)$.
\end{enumerate}
\end{thm}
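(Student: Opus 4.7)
The theorem follows almost immediately from Proposition~\ref{lem:Frob-unimo} once we expand the equation $\sum_{\gamma\in\Deltaa_\odd}\dim(\g_\odd^\gamma)\gamma = \mathbf{0}$ in the fixed $\mathz$-basis $\lambda_1,\ldots,\lambda_\ell$ of $\X(T)$. The plan is to show each direction by comparing coordinates in this basis.

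For the implication \eqref{prp:Frob-unimo(2)}$\Rightarrow$\eqref{prp:Frob-unimo(1)}, I would write $\gamma = \sum_{i=1}^\ell n(\gamma)_i\lambda_i$ for each $\gamma\in\Deltaa_\odd$, so the hypothesis becomes
\[
\sum_{\gamma\in\Deltaa_\odd}\dim(\g_\odd^\gamma)n(\gamma)_i = 0
\quad\text{for every } 1\leq i\leq \ell.
\]
Since $0\in p^r\mathz$ for every positive integer $r$, the criterion of Proposition~\ref{lem:Frob-unimo} is trivially satisfied, and thus every $\G_r$ is unimodular.

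For the converse \eqref{prp:Frob-unimo(1)}$\Rightarrow$\eqref{prp:Frob-unimo(2)}, suppose $\G_r$ is unimodular for every $r\geq 1$. By Proposition~\ref{lem:Frob-unimo}, for each fixed $i$ the integer $N_i := \sum_{\gamma\in\Deltaa_\odd}\dim(\g_\odd^\gamma)n(\gamma)_i$ lies in $p^r\mathz$ for every $r$. The only integer divisible by arbitrarily large powers of $p$ is zero, so $N_i = 0$ for all $i$. Recombining the coordinates, this means $\sum_{\gamma\in\Deltaa_\odd}\dim(\g_\odd^\gamma)\gamma = \mathbf{0}$ in $\X(T)\cong \bigoplus_{i=1}^\ell\mathz\lambda_i$.

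Both directions are essentially bookkeeping once Proposition~\ref{lem:Frob-unimo} is in hand; no real obstacle remains since the hard analytic content (the explicit formula for the distinguished group-like element via the Berezinian of the adjoint action, and the reduction to the torus via Lemma~\ref{prp:character3}) has already been absorbed into that proposition. The only subtlety worth flagging is the use of the standard fact $\bigcap_{r\geq 1}p^r\mathz = \{0\}$, which is what turns the family of divisibility conditions into a single equality in $\X(T)$.
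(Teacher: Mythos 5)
Your argument is correct and is essentially identical to the paper's proof: both reduce the statement to Proposition~\ref{lem:Frob-unimo} and use that an integer lying in $p^r\mathz$ for all $r$ must be zero. No issues.
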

\begin{proof}
By Proposition~\ref{lem:Frob-unimo},
it follows that the condition \eqref{prp:Frob-unimo(1)} holds if and only if
$\sum_{\gamma\in\Deltaa_{\odd}}\dim(\g_\odd^\gamma) n(\gamma)_i =0$ for all $1\leq i\leq \ell$.
The last condition is obviously equivalent to \eqref{prp:Frob-unimo(2)}.
The proof is done.
\end{proof}

\begin{cor}\label{cor:Frob-unimo}
Let $\G$ be one of $\GL(m|n)$, $\Q(n)$ or a Chevalley supergroup of classical type.
For any positive integer $r$, the $r$-th Frobenius kernel $\G_r$ of $\G$ is unimodular.
\end{cor}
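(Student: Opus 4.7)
The plan is to deduce Corollary~\ref{cor:Frob-unimo} directly from Theorem~\ref{prp:Frob-unimo} by verifying the condition $\sum_{\gamma\in\Deltaa_{\odd}}\dim(\g_\odd^\gamma)\gamma=\mathbf{0}$ in $\X(T)$ for each of the three families. This mirrors the proof of Corollary~\ref{cor:quasi-unimo}, and the key observation (stated as the remark immediately following that corollary) is that the existence of the ``order'' \eqref{eq:condition} on $\Deltaa_\odd$ is a purely combinatorial property of the root system that does not depend on the characteristic of $\Bbbk$. Hence we only need to exhibit, for each $\G$ under consideration, a decomposition $\Deltaa_\odd=\Deltaa_\odd^+\sqcup(-\Deltaa_\odd^+)$ with $\dim(\g_\odd^\gamma)=\dim(\g_\odd^{-\gamma})$ for every $\gamma\in\Deltaa_\odd^+$, because any such decomposition immediately yields $\sum_{\gamma\in\Deltaa_\odd}\dim(\g_\odd^\gamma)\gamma=\mathbf{0}$ by pairing each $\gamma$ with $-\gamma$.

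For $\GL(m|n)$ this is read off from Example~\ref{ex:root-sys}\eqref{ex:root-sys(1)}: every odd root is of the form $\pm(\lambda_i-\lambda_j)$ with $1\le i\le m<j\le m+n$, each weight space being one-dimensional, so we may take $\Deltaa_\odd^+:=\{\lambda_i-\lambda_j\mid 1\le i\le m<j\le m+n\}$. For $\Q(n)$, Example~\ref{ex:root-sys}\eqref{ex:root-sys(2)} gives $\Deltaa_\odd=\{\lambda_i-\lambda_j\mid i\neq j\}$ with constant multiplicity, so $\Deltaa_\odd^+:=\{\lambda_i-\lambda_j\mid i<j\}$ works. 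For Chevalley supergroups of classical type, the construction of Fioresi--Gavarini \cite{FioGav12} equips the underlying contragredient Lie superalgebra with a distinguished choice of positive/negative odd roots inherited from the $\mathbb{Z}$-form of the classical Lie superalgebra, and the root space dimensions satisfy $\dim(\g_\odd^\gamma)=\dim(\g_\odd^{-\gamma})$ because the defining invariant bilinear form pairs $\g_\odd^\gamma$ non-degenerately with $\g_\odd^{-\gamma}$; this yields the required $\Deltaa_\odd^+$.

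Combining these verifications, the hypothesis of Theorem~\ref{prp:Frob-unimo}\eqref{prp:Frob-unimo(2)} holds in each case, and the theorem then delivers unimodularity of $\G_r$ for every positive integer $r$. The only non-routine point is justifying $\dim(\g_\odd^\gamma)=\dim(\g_\odd^{-\gamma})$ for the Chevalley supergroup case uniformly across all classical types; this is the place where one must invoke \cite{FioGav12} rather than perform a direct case-by-case computation. I would expect the write-up to consist of one sentence applying Theorem~\ref{prp:Frob-unimo}, followed by a reference to Example~\ref{ex:root-sys} for $\GL(m|n)$ and $\Q(n)$ and to \cite{FioGav12} for the Chevalley case.
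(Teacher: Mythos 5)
Your proposal is correct and follows essentially the same route as the paper: the paper's proof simply says that, as in Corollary~\ref{cor:quasi-unimo}, one can find a decomposition $\Deltaa_\odd=\Deltaa_\odd^+\sqcup\Deltaa_\odd^-$ satisfying condition~\eqref{eq:condition} for each of the three families (the remark after that corollary having already noted this is characteristic-independent), and then invokes Theorem~\ref{prp:Frob-unimo}. Your write-up just supplies more explicit case-by-case detail than the paper does.
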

\begin{proof}
As in the proof of Corollary~\ref{cor:quasi-unimo},
for each $\G$, we can find a decomposition $\Deltaa_\odd=\Deltaa_\odd^+\sqcup\Deltaa_\odd^-$ satisfying the condition~\eqref{eq:condition}.
Thus by Theorem~\ref{prp:Frob-unimo}, we are done.
\end{proof}

\begin{ex} \label{ex:non-unimo_p}
As we have seen in Example~\ref{ex:non-unimo_0},
the $r$-th Frobenius kernels of
the periplectic supergroup $\P(n)$
and the supergroup $\G$ defined in Example~\ref{ex:non-unimo_0}\eqref{ex:non-unimo_0(2)} are non-unimodular.
For the supergroup $\F^{\langle g_1,\dots,g_n\rangle}=F\ltimes (\Gao)^n$ given in Example~\ref{ex:root-sys}\eqref{ex:root-sys(4)},
the $r$-th Frobenius kernel $(\F^{\langle g_1,\dots,g_n\rangle})_r$ is unimodular if and only if
$\sum_{j=1}^m d_j \chi_{i_j}=\mathbf{0}$,
see Example~\ref{ex:non-unimo_0}\eqref{ex:non-unimo_0(3)}.
\qedd
\end{ex}

\section{Steinberg's Tensor Product Theorem} \label{sec:Ste}
We fix a split quasireductive supergroup $\G$ with a split maximal torus $T$ of $\Gev$.
Set $\g:=\Lie(\G)$ and $\h:=\g^{\mathbf{0}}$ as before.
In this section,
we establish Steinberg's tensor product theorem for $\G$ under natural assumptions.

\subsection{Simple $\G$-supermodules} \label{sec:simple-G-supermods}
In \cite{Shi20}, we defined some special closed super-subgroups of $\G$ and constructed all simple left $\G$-supermodules.
In the following, we briefly review the construction.

First of all, we constructed a closed super-subgroup $\T$ of $\G$ such that $\Tev=T$ with $\Lie(\T)=\h$.
We fix a group homomorphism $\Upsilon:\mathz\Deltaa\to\mathbb{R}$
with $\Upsilon(\Deltaa\setminus\{\mathbf{0}\})\subset\mathbb{R}\setminus\{0\}$ to define an ``order'' on $\Deltaa$ as follows:
\[
\Deltaa^\pm:=\{\alpha\in\Deltaa\setminus\{\mathbf{0}\} \mid \pm \Upsilon(\alpha) >0\},
\quad
\Deltaa_\epsilon^\pm:=\Deltaa_\epsilon \cap \Deltaa^\pm
\quad
(\epsilon\in\mathz_2).
\]
Along this order, we can construct a closed super-subgroup $\B^+$ (resp.~$\B$) of $\G$, called the \defnote{Borel super-subgroup} of $\G$,
such that $\Bevp$ (resp.~$\Bev$) is a positive (resp.~negative) Borel subgroup of $\Gev$
with respect to $\Deltaa_\eve^+$ (resp.~$\Deltaa_\eve^-$) satisfying
\[
\Lie(\B^+) = \h\oplus \bigoplus_{\alpha\in\Deltaa^+} \g^\alpha
\quad
(\text{resp.}~\Lie(\B) = \h\oplus \bigoplus_{\alpha\in\Deltaa^-} \g^\alpha).
\]
Also, we can construct a closed super-subgroup $\U^+$ of $\G$ such that $\Uevp$ is a unipotent subgroup of $\Gev$
and $\Lie(\U^+)=\bigoplus_{\alpha\in\Deltaa^+}\g^\alpha$.
One sees that $\U^+$ is {\it unipotent},
that is,
for any left $\U^+$-supermodule,
its $\U^+$-invariant super-subspace is non-zero, see \cite{ZubUly14}.
(In other words,
the corresponding Hopf superalgebra $\O(\U^+)$ is irreducible, see also \cite[Definition~4(1)]{Mas12}).
Moreover, we have $\B^+\cong \T\ltimes\U^+$.

Analogously, we can find $\U\subset \G$ such that $\B\cong \T\ltimes \U$.
\medskip

Using Clifford superalgebra theory,
we can find a simple left $\T$-supermodule $\uu(\lambda)$ for each $\lambda\in \X(T)$.
Moreover, the map $\X(T)\to \Simple_\Pi(\T)$; $\lambda \mapsto \uu(\lambda)$ is bijective.
As left $T$-modules, this $\uu(\lambda)$ is isomorphic to a (finite) copy of the one-dimensional left $T$-module $\Bbbk^\lambda$.
Thus, if $\T=T$ (i.e., $\mathbf{0}\notin\Deltaa$), then $\uu(\lambda)$ is just $\Bbbk^\lambda$.
\begin{lemm}\label{prp:character-triv}
We have $\X(\T)\cong \X(T)$.
\end{lemm}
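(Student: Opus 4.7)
By Lemma~\ref{prp:character} applied to $\T$ in place of $\G$ (using $\Tev = T$), the restriction map $\X(\T) \to \X(T)$, $\chi \mapsto \chi|_T$, is injective. The content of the lemma is therefore surjectivity: for each $\chi \in \X(T)$ I need to exhibit an extension $\tilde\chi \in \X(\T)$.

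The key structural fact exploited is that $\h_\odd = \g_\odd^{\mathbf{0}}$, so $T$ acts trivially on $\h_\odd$ (and dually on $\h_\odd^*$) via the adjoint action. Applying Theorem~\ref{thm:tensor-split} to $\T$ gives an isomorphism $\O(\T) \cong \O(T) \otimes \bigwedge(\h_\odd^*)$ of counit-preserving left $\O(T)$-comodule superalgebras, and the trivial $T$-coaction on $\h_\odd^*$ means the left $\O(T)$-comodule structure on the exterior factor is trivial. This triviality is precisely what is needed to transport the group-like property of $\chi \in \gpl(\O(T)) = \X(T)$ into a group-like lift in $\O(\T)$.

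Concretely, given $\chi \in \X(T)$, I would propose $\tilde\chi := \chi \otimes 1 \in \O(T) \otimes \bigwedge(\h_\odd^*) \cong \O(\T)$ and verify that $\Delta_{\O(\T)}(\tilde\chi) = \tilde\chi \otimes \tilde\chi$. Using the $\O(T)$-colinearity of the tensor decomposition together with the trivial coaction on $\bigwedge(\h_\odd^*)$, the comultiplication on $\chi \otimes 1$ reduces to $\Delta_{\O(T)}(\chi) \otimes 1 = (\chi \otimes 1) \otimes (\chi \otimes 1)$, so $\tilde\chi$ is group-like and visibly restricts to $\chi$ on $T$. An equivalent viewpoint is that, under the trivial action, the author's construction of $\T$ in \cite{Shi20} realizes $\T$ with a projection onto $T$ in the relevant super-categorical sense, and $\tilde\chi$ is just $\chi$ composed with this projection; since characters of the odd-unipotent complement have trivial group-like set (its Hopf superalgebra being an exterior algebra on primitive odd generators), nothing else is contributed to $\X(\T)$.

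The main obstacle is that Theorem~\ref{thm:tensor-split} supplies the decomposition only as comodule superalgebras, not as Hopf superalgebras, and the super-bracket $[\h_\odd, \h_\odd] \subseteq \h_\eve$ is generally non-zero, which produces potential twist terms in the Hopf structure of $\O(\T)$. The heart of the argument will therefore be a direct check that those twist terms annihilate the specific element $\chi \otimes 1$ — which is exactly where the triviality of the $T$-action on $\h_\odd$ is used. The finer structural information about $\T$ obtained in \cite{Shi20} should make this a routine bookkeeping rather than a deep calculation.
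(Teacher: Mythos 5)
Your route is genuinely different from the paper's. The paper does not touch the Hopf structure of $\O(\T)$ at all: it identifies $\X(\T)$ with the set of (parity-classes of) one-dimensional simple left $\T$-supermodules and then invokes the Clifford-theoretic family $\uu(\lambda)$, $\lambda\in\X(T)$, together with the fact that $\uu(\lambda)|_T\cong(\Bbbk^\lambda)^{\oplus n_\lambda}$. Your injectivity step (Lemma~\ref{prp:character} applied to $\T$, using $\Tev=T$) is fine and agrees with what the paper has available. The issue is surjectivity, which is exactly the part you outsource to ``routine bookkeeping.''

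That step is where the argument breaks, and it cannot be repaired. The twist terms in $\Delta_{\O(\T)}$ relative to the splitting $\O(\T)\cong\O(T)\otimes\bigwedge(\h_\odd^*)$ of Theorem~\ref{thm:tensor-split} are governed by the super-bracket $[\h_\odd,\h_\odd]\subseteq\h_\eve$, not by the adjoint $T$-action on $\h_\odd$; the triviality of that action does nothing to make them vanish on $\chi\otimes1$. Concretely, take the rank-one building block $\T=\Q(1)$ of the Cartan super-subgroup of $\Q(n)$: then $\O(\T)=\Bbbk[a^{\pm1}]\otimes\bigwedge(\xi)$ with $\Delta(a)=a\otimes a-\xi\otimes\xi$, and since $a\otimes a$ and $\xi\otimes\xi$ commute while $(\xi\otimes\xi)^2=0$, one gets $\Delta(a^n)=a^n\otimes a^n-n\,a^{n-1}\xi\otimes a^{n-1}\xi$. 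As $\O(\T)_\eve=\Bbbk[a^{\pm1}]$, the group-likes of $\O(\T)$ are exactly the $a^n$ with $n=0$ in $\Bbbk$; in particular your candidate $\tilde\chi=\chi\otimes1$ is not group-like, and the character $a$ of $T=\Gm$ admits \emph{no} extension to $\T$ at all. The obstruction is precisely $\chi([\h_\odd,\h_\odd])\neq0$, the same nondegeneracy that makes $\uu(\lambda)$ higher-dimensional in Example~\ref{ex:non-abs-irr}. So your construction fails whenever $\mathbf{0}\in\Deltaa$ and the form $b^\chi$ is nonzero (when $\mathbf{0}\notin\Deltaa$ the lemma is vacuous because $\T=T$), and the same computation shows that the surjectivity of $\X(\T)\to\X(T)$ is itself a delicate point which neither your plan nor the paper's one-line argument actually establishes; the honest conclusion of your method is only that $\X(\T)$ embeds into $\{\lambda\in\X(T)\mid\lambda([\h_\odd,\h_\odd])=0\}$.
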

\begin{proof}
For each $\lambda\in\X(T)$,
there exists $n_\lambda>0$ such that
$\uu(\lambda)\cong (\Bbbk^\lambda)^{\oplus n_\lambda}$ as left $T$-modules.
Since $\X(\T)$ is identified with the set of all equivalence classes of one-dimensional left $\T$-supermodule under the parity change $\Pi$,
we conclude that $\X(\T)$ is naturally identified with $\X(T)$.
This proves the claim.
\end{proof}

Since $\T$ is a closed super-subgroup of $\B$,
we may regard $\uu(\lambda)$ as a left $\B$-supermodule (i.e., a right $\O(\B)$-supercomodule),
which we denote by the same symbol.
For each $\lambda\in\X(T)$, we get a left $\G$-supermodule
\[
H^0(\lambda) := \ind_{\B}^{\G}(\uu(\lambda)) = \uu(\lambda) \cotens_{\O(\B)} \O(\G),
\]
where $\cotens_{\O(\B)}$ denotes the cotensor product over $\O(\B)$
and $\O(\G)$ is naturally regarded as a left $\O(\B)$-supercomodule via $\B\subset \G$.

Set
\[
\X(T)^\flat:=\{\lambda\in\X(T) \mid H^0(\lambda)\neq0\}.
\]
For each $\lambda\in\X(T)^\flat$,
we can show that $H^0(\lambda)$ has a unique simple left $\G$-super-submodule $L(\lambda)$.
\begin{thm}[{\cite[Theorem~4.12 and Proposition~4.15]{Shi20}}] \label{prp:simple-G}
The map $\X(T)^\flat\to \Simple_\Pi(\G)$; $\lambda \mapsto L(\lambda)$ is bijective.
Moreover, $\lambda$ is a ``highest'' $T$-weight of $L(\lambda)$,
in the sense that
the $\lambda$-weight superspace $L(\lambda)^\lambda$ is isomorphic to $\uu(\lambda)$ as left $\T$-supermodules
and the action of $\U^+$ on $L(\lambda)^\lambda$ is trivial.
\end{thm}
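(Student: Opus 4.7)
The plan is to follow the highest-weight approach, adapting the classical construction of simple reductive-group modules to the super-setting via Frobenius reciprocity for $\ind_\B^\G$ and the triangular decomposition $\hy(\G)=\hy(\U)\hy(\T)\hy(\U^+)$ that follows from \cite[Theorem~3.11]{Shi20}.

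\textbf{Step 1.} I would first show $H^0(\lambda)^{\U^+}\cong \uu(\lambda)$ as $\T$-supermodules. Combining the tensor-split theorem (Theorem~\ref{thm:tensor-split}) with PBW for $\hy(\G)$, one obtains an isomorphism $\O(\G)\cong \O(\U^+)\otimes \O(\B)$ as right $\O(\B)$-supercomodules and left $\O(\U^+)$-supercomodules. Hence
\begin{equation*}
H^0(\lambda)\;=\;\uu(\lambda)\cotens_{\O(\B)}\O(\G)\;\cong\;\uu(\lambda)\otimes\O(\U^+)
\end{equation*}
as left $\U^+$-supermodules, where $\U^+$ acts by left translation on $\O(\U^+)$ and trivially on $\uu(\lambda)$. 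Since $\U^+$ is unipotent \cite{ZubUly14}, $\O(\U^+)^{\U^+}=\Bbbk$, and so $H^0(\lambda)^{\U^+}\cong \uu(\lambda)$.

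\textbf{Step 2.} Any nonzero $\G$-super-submodule $V\subseteq H^0(\lambda)$ has $V^{\U^+}\neq 0$ by unipotence, and $V^{\U^+}\subseteq \uu(\lambda)$ is a nonzero $\T$-sub-supermodule of a $\T$-simple, so $V^{\U^+}=\uu(\lambda)$. Therefore the $\G$-submodule of $H^0(\lambda)$ generated by $\uu(\lambda)$ lies inside every nonzero submodule, and is the unique simple $\G$-sub-supermodule $L(\lambda)$. In particular $L(\lambda)^{\U^+}=\uu(\lambda)$, and maximality of $\Upsilon(\lambda)$ among the $\T$-weights of $H^0(\lambda)$ forces $L(\lambda)^\lambda=L(\lambda)^{\U^+}=\uu(\lambda)$, with $\U^+$ acting trivially.

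\textbf{Step 3 (Bijectivity).} For surjectivity, let $L$ be a simple $\G$-supermodule. Unipotence of $\U^+$ gives $L^{\U^+}\neq 0$; decomposing as a $\T$-supermodule, pick a $\T$-simple summand $\uu(\lambda)$ with $\Upsilon(\lambda)$ maximal. The PBW decomposition shows $L$ is generated over $\hy(\U)\hy(\T)$ by any nonzero vector of $\uu(\lambda)$, so $\lambda$ is actually the maximal $\Upsilon$-weight of $L$ and $L^\lambda=\uu(\lambda)$. The $\T$-equivariant projection $\pi:L\to \uu(\lambda)$ onto this weight space kills every $\U$-translate (since $\U$ strictly decreases $\Upsilon$), hence is $\B$-equivariant. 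Frobenius reciprocity
\begin{equation*}
{}_\G\underline{\Hom}(L,H^0(\lambda))\;\cong\;{}_\B\underline{\Hom}(L,\uu(\lambda))
\end{equation*}
then produces a nonzero $\G$-morphism $L\to H^0(\lambda)$, injective by simplicity, whence $\lambda\in \X(T)^\flat$ and $L\cong L(\lambda)$ up to parity shift. Injectivity is immediate: $L(\lambda)\cong \Pi^\epsilon L(\mu)$ yields $\uu(\lambda)\cong \Pi^\epsilon \uu(\mu)$ in $\Simple_\Pi(\T)$, and the Clifford-theoretic bijection $\X(T)\leftrightarrow \Simple_\Pi(\T)$ forces $\lambda=\mu$.

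The principal obstacle is Step 1: establishing the bisupercomodule decomposition $\O(\G)\cong \O(\U^+)\otimes \O(\B)$ with enough compatibility to compute $H^0(\lambda)^{\U^+}$ exactly. One must carefully control the interaction between the odd generators of $\O(\U^+)$ and the $\O(\B)$-coaction, especially when $\mathbf{0}\in \Deltaa$ so that $\T$ itself has a genuinely super structure and $\uu(\lambda)$ is no longer one-dimensional. The irreducibility of $\O(\U^+)$ in the sense of \cite{ZubUly14} is precisely what rules out spurious $\U^+$-invariants beyond $\Bbbk$ and makes the identification $H^0(\lambda)^{\U^+}=\uu(\lambda)$ work cleanly.
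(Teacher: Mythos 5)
Your overall strategy---restriction to the big cell, unipotence of $\U^+$ to locate a unique simple submodule, and Frobenius reciprocity for surjectivity---is the standard one and is essentially the argument of \cite{Shi20} that the theorem cites; compare the proof of Proposition~\ref{prp:L_r-simple} in this paper, which runs the same argument for the quadruple $(\G_r,\B_r^+,\B_r,\T_r)$. Steps~2 and~3 are sound modulo Step~1, and your identification of the injectivity/bijectivity of $\X(T)^\flat\to\Simple_\Pi(\G)$ is handled correctly.

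Step~1, however, asserts something false. The multiplication $\U^+\times\B\to\G$ is only an open immersion onto the big cell, not an isomorphism of superschemes, so there is no isomorphism $\O(\G)\cong\O(\U^+)\otimes\O(\B)$; what one has is an injective map $\O(\G)\hookrightarrow\O(\U^+)\otimes\O(\B)$ of (left $\O(\U^+)$-, right $\O(\B)$-) supercomodules given by restriction to the big cell. Consequently the displayed isomorphism $H^0(\lambda)\cong\uu(\lambda)\otimes\O(\U^+)$ is also false: it would make $H^0(\lambda)$ nonzero (indeed infinite-dimensional) for every $\lambda\in\X(T)$, forcing $\X(T)^\flat=\X(T)$, which contradicts $\X(T)^\flat\subset\X(T)_+$ (\cite[Proposition~4.18]{Shi20}, quoted before Theorem~\ref{thm:main}) and fails already for $\Gev=\SLL_2$ purely even. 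What survives, and what you actually need, is left-exactness of the cotensor product: the injection above yields an embedding $H^0(\lambda)\hookrightarrow\uu(\lambda)\otimes\O(\U^+)$ of $\U^+$-supermodules, whence $H^0(\lambda)^{\U^+}\hookrightarrow\uu(\lambda)$; combined with $H^0(\lambda)^{\U^+}\neq0$ (unipotence, once $H^0(\lambda)\neq0$) and the $\T$-simplicity of $\uu(\lambda)$, this gives $H^0(\lambda)^{\U^+}\cong\uu(\lambda)$, after which your Steps~2 and~3 go through. Note that the isomorphism you wanted \emph{does} hold for the Frobenius kernels, where $\O(\G_r)\cong\O(\B_r)\otimes\O(\U_r^+)$ and correspondingly $\ind_{\B_r}^{\G_r}(\uu(\lambda))\neq0$ for \emph{all} $\lambda$ (Proposition~\ref{prp:L_r-simple}); you appear to have transported that infinitesimal phenomenon to the full group, where it breaks down and where the nonvanishing of $H^0(\lambda)$ genuinely cuts out the subset $\X(T)^\flat$.
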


\begin{deff}\label{def:abs-irr}
A simple left $\G$-supermodule $L$ is said to be \defnote{absolutely simple}
if $L\otimes \Bbbk'$ is a simple left $\G_{\Bbbk'}$-supermodule for all field extensions $\Bbbk'$ of $\Bbbk$.
Here, $\G_{\Bbbk'}$ denotes the base change of $\G$ to $\Bbbk'$.
\end{deff}

The following is a corollary of Theorem~\ref{prp:simple-G}:
\begin{cor}
Let $\lambda\in\X(T)^\flat$.
If $L(\lambda)$ is absolutely simple,
then $L(\lambda)\otimes \Bbbk'\cong L_{\Bbbk'}(\lambda)$ as left $\G_{\Bbbk'}$-supermodules for all field extensions $\Bbbk'$ of $\Bbbk$.
Here, $L_{\Bbbk'}(\lambda)$ denotes a simple left $\G_{\Bbbk'}$-supermodule of highest weight $\lambda$.
\end{cor}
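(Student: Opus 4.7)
The plan is to apply Theorem~\ref{prp:simple-G} over the extension field $\Bbbk'$ and match the highest-weight parameter on both sides. The absolute simplicity hypothesis tells us $L(\lambda)\otimes\Bbbk'$ is a simple left $\G_{\Bbbk'}$-supermodule, so the classification theorem produces a unique $\mu\in\X(T_{\Bbbk'})^\flat$ and a parity $\epsilon\in\mathz_2$ with $L(\lambda)\otimes\Bbbk'\cong\Pi^\epsilon L_{\Bbbk'}(\mu)$; the task is to check $\mu=\lambda$ and $\epsilon=\eve$.

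First I would note that, because $T$ is split and $\Bbbk'/\Bbbk$ is a field extension, $T_{\Bbbk'}$ remains a split maximal torus of $(\G_{\Bbbk'})_\ev$ with $\X(T_{\Bbbk'})=\X(T)$, and the root system $\Deltaa$ is unchanged. Next I would exploit the fact that taking $T$-weight spaces commutes with flat base change: $(L(\lambda)\otimes\Bbbk')^\lambda = L(\lambda)^\lambda\otimes\Bbbk'$, which is nonzero and inherits triviality of the $\U^+_{\Bbbk'}$-action from the triviality of the $\U^+$-action on $L(\lambda)^\lambda$ guaranteed by Theorem~\ref{prp:simple-G}. Since the classification identifies $\mu$ as the unique $T_{\Bbbk'}$-weight of $L_{\Bbbk'}(\mu)$ whose weight space is annihilated by $\U^+_{\Bbbk'}$, this forces $\mu=\lambda$.

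It then remains to settle the parity. The $\lambda$-weight space of the simple $\G_{\Bbbk'}$-supermodule $L(\lambda)\otimes\Bbbk'\cong\Pi^\epsilon L_{\Bbbk'}(\lambda)$ is, on one hand, $\uu(\lambda)\otimes\Bbbk'$ (via flat base change of $L(\lambda)^\lambda\cong\uu(\lambda)$) and, on the other hand, $\Pi^\epsilon\uu_{\Bbbk'}(\lambda)$ by Theorem~\ref{prp:simple-G}. In particular, $\uu(\lambda)\otimes\Bbbk'$ is simple as a $\T_{\Bbbk'}$-supermodule, and since the classification $\X(T)\to\Simple_\Pi(\T_{\Bbbk'})$ of simple $\T_{\Bbbk'}$-supermodules is given by $\lambda\mapsto\uu_{\Bbbk'}(\lambda)$ (Lemma~\ref{prp:character-triv} together with the construction of $\uu(\lambda)$ via Clifford superalgebra theory), we obtain $\uu(\lambda)\otimes\Bbbk'\cong\uu_{\Bbbk'}(\lambda)$ as $\T_{\Bbbk'}$-supermodules. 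This identification is parity-preserving, forcing $\epsilon=\eve$.

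The main obstacle is the last step: keeping the parity bookkeeping honest, since Theorem~\ref{prp:simple-G} only classifies simples up to $\Pi$. The Clifford-theoretic construction of $\uu(\lambda)$ fixes a distinguished parity of the $\lambda$-weight space, and one has to check that this construction is compatible with base change—equivalently, that the simple module over the Clifford superalgebra corresponding to $\lambda$ is absolutely simple as soon as $L(\lambda)$ is. Everything else (flat base change of induction, preservation of root data, and the $\U^+$-invariance characterization of the highest weight) is routine.
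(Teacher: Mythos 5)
Your argument is correct and is essentially what the paper intends: the corollary is stated there without proof as an immediate consequence of Theorem~\ref{prp:simple-G}, and your route (absolute simplicity makes $L(\lambda)\otimes\Bbbk'$ a simple $\G_{\Bbbk'}$-supermodule, the classification over $\Bbbk'$ writes it as $\Pi^\epsilon L_{\Bbbk'}(\mu)$, and base change of the $\U^+$-trivial highest weight space forces $\mu=\lambda$) is exactly that consequence. The parity step you single out as the main obstacle is immaterial here, because the paper classifies simple supermodules only up to the parity change $\Pi$ (via $\Simple_\Pi$), so ``$L_{\Bbbk'}(\lambda)$, a simple supermodule of highest weight $\lambda$'' is itself only specified up to $\Pi$ and there is nothing further to pin down.
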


\begin{prop} \label{prp:parameter}
For a field extension $\Bbbk'$ of $\Bbbk$,
we have $\X(T_{\Bbbk'})^\flat \subset \X(T)^\flat$.
If $\mathbf{0}\notin\Deltaa$ (or equivalently, $\T=T$),
then $\X(T_{\Bbbk'})^\flat = \X(T)^\flat$.
\end{prop}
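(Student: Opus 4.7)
The plan is to transport nonzeroness of $H^0(\lambda)$ across the base change $-\otimes_\Bbbk \Bbbk'$ using flatness together with the Clifford-theoretic description of the simple $\T$-supermodule $\uu(\lambda)$. First, since $T\cong \Gm^\ell$ is split, one has $\X(T_{\Bbbk'})=\X(T)$ canonically, so both $\X(T)^\flat$ and $\X(T_{\Bbbk'})^\flat$ sit inside the same lattice. Next, because $\Bbbk'$ is flat over $\Bbbk$, the cotensor product commutes with extension of scalars, yielding a natural isomorphism
\[
\ind_\B^\G(V)\otimes_\Bbbk \Bbbk' \;\cong\; \ind_{\B_{\Bbbk'}}^{\G_{\Bbbk'}}\bigl(V\otimes_\Bbbk \Bbbk'\bigr)
\]
for any left $\B$-supermodule $V$; applied to $V=\uu(\lambda)$ this gives $H^0(\lambda)\otimes_\Bbbk \Bbbk' \cong \ind_{\B_{\Bbbk'}}^{\G_{\Bbbk'}}\bigl(\uu(\lambda)\otimes_\Bbbk \Bbbk'\bigr)$.

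For the inclusion $\X(T_{\Bbbk'})^\flat\subset \X(T)^\flat$, I would observe that every $T$-weight of $\uu(\lambda)\otimes_\Bbbk \Bbbk'$ equals $\lambda$, so by the Clifford-algebraic construction of simple $\T_{\Bbbk'}$-supermodules (every such simple of weight $\lambda$ is $\uu_{\Bbbk'}(\lambda)$ up to parity, and the Clifford-type superalgebra governing $\h_\odd$ over $\Bbbk'$ is semisimple), $\uu_{\Bbbk'}(\lambda)$ embeds as a direct summand of $\uu(\lambda)\otimes_\Bbbk \Bbbk'$. Applying the left-exact functor $\ind_{\B_{\Bbbk'}}^{\G_{\Bbbk'}}$ and the base-change identity above produces an embedding $H^0_{\Bbbk'}(\lambda)\hookrightarrow H^0(\lambda)\otimes_\Bbbk \Bbbk'$. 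If $\lambda\in\X(T_{\Bbbk'})^\flat$, then $H^0_{\Bbbk'}(\lambda)\neq 0$, hence $H^0(\lambda)\otimes_\Bbbk \Bbbk'\neq 0$, and so $H^0(\lambda)\neq 0$, proving $\lambda\in \X(T)^\flat$.

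When $\mathbf{0}\notin\Deltaa$ one has $\T=T$, and the simple $\T$-supermodule $\uu(\lambda)$ is just $\Bbbk^\lambda$; then $\uu(\lambda)\otimes_\Bbbk \Bbbk'=(\Bbbk')^\lambda=\uu_{\Bbbk'}(\lambda)$, so the base-change identity becomes $H^0_{\Bbbk'}(\lambda)\cong H^0(\lambda)\otimes_\Bbbk \Bbbk'$, and nonzeroness passes both ways, giving the equality. The main technical obstacle is the Clifford-theoretic step in the general case: one must verify that $\uu_{\Bbbk'}(\lambda)$ is a direct summand—not merely a subquotient—of $\uu(\lambda)\otimes_\Bbbk \Bbbk'$, which is what allows induction to transport nonvanishing back to $H^0(\lambda)$.
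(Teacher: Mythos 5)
Your proposal is correct and follows essentially the same route as the paper: both base-change the induction $H^0(\lambda)\otimes\Bbbk'\cong\ind_{\B_{\Bbbk'}}^{\G_{\Bbbk'}}(\uu(\lambda)\otimes\Bbbk')$, locate $\uu_{\Bbbk'}(\lambda)$ inside $\uu(\lambda)\otimes\Bbbk'$ to get $H^0_{\Bbbk'}(\lambda)\subset H^0(\lambda)\otimes\Bbbk'$, and observe that $\uu(\lambda)=\Bbbk^\lambda$ when $\mathbf{0}\notin\Deltaa$ forces equality. Your extra Clifford-theoretic justification of the embedding (which the paper leaves implicit) is fine, and note that a mere sub-supermodule already suffices by left-exactness of the cotensor product, so the direct-summand refinement you flag as the main obstacle is not actually needed.
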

\begin{proof}
Since the induction functor $\ind_{\B}^{\G}(-)$ commutes with all field extensions,
\[
H^0(\lambda)\otimes \Bbbk'
\cong \big(\uu(\lambda)\otimes \Bbbk'\big) \cotens_{\O(\B_{\Bbbk'})} \O(\G_{\Bbbk'})
\supset
\uu_{\Bbbk'}(\lambda) \cotens_{\O(\B_{\Bbbk'})} \O(\G_{\Bbbk'})
=:H^0_{\Bbbk'}(\lambda).
\]
Here, $\uu_{\Bbbk'}(\lambda)$ denotes a unique simple left $\T_{\Bbbk'}$-supermodule with weight $\lambda$.
Thus, $H^0_{\Bbbk'}(\lambda)\neq0$ implies $H^0(\lambda)\neq0$.
If $\mathbf{0}\notin\Deltaa$,
then $\uu(\lambda) = \Bbbk^\lambda$,
and hence $H^0_{\Bbbk'}(\lambda)\cong H^0(\lambda)$.
\end{proof}

\begin{prop} \label{prp:T=Tev-abs-simple}
If $\mathbf{0}\notin\Deltaa$ (or equivalently, $\T=T$),
then all simple left $\G$-supermodules are absolutely simple.
In particular, for a field extension $\Bbbk'$ of $\Bbbk$ and a left $\G$-supermodule $V$,
we have (1) $\soc_{\G}(V)\otimes \Bbbk' \cong\soc_{\G_{\Bbbk'}}(V\otimes \Bbbk')$;
and (2) $V$ is $\G$-semisimple if and only if $V\otimes\Bbbk'$ is $\G_{\Bbbk'}$-semisimple.
\end{prop}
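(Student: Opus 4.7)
The plan is to first establish that $L(\lambda)\otimes\Bbbk'\cong L_{\Bbbk'}(\lambda)$ for every field extension $\Bbbk'/\Bbbk$, which immediately gives absolute simplicity, and then to derive (1) and (2) from this identification. Under $\T=T$, Theorem~\ref{prp:simple-G} tells us $L(\lambda)^\lambda=\uu(\lambda)=\Bbbk^\lambda$, a one-dimensional purely even weight space; Proposition~\ref{prp:parameter} guarantees $\X(T_{\Bbbk'})^\flat=\X(T)^\flat$, and since $\uu(\lambda)$ is one-dimensional, induction commutes with base change, giving $H^0(\lambda)\otimes\Bbbk'\cong H^0_{\Bbbk'}(\lambda)$. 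Since $L_{\Bbbk'}(\lambda)$ is the unique simple $\G_{\Bbbk'}$-subsupermodule of $H^0_{\Bbbk'}(\lambda)$ and hence sits inside every non-zero subsupermodule, one inclusion $L_{\Bbbk'}(\lambda)\subseteq L(\lambda)\otimes\Bbbk'$ is immediate. For the reverse inclusion I observe that $L(\lambda)=\hy(\G)\acthy L(\lambda)^\lambda$: the right-hand side is a non-zero $\hy(\G)$-$T$-subsupermodule, hence a $\G$-subsupermodule by Theorem~\ref{thm:G=loc-fin-hy(G)-T}, which must coincide with the simple $L(\lambda)$. Base-changing, $L(\lambda)\otimes\Bbbk'=\hy(\G_{\Bbbk'})\acthy(L(\lambda)^\lambda\otimes\Bbbk')$; since $H^0_{\Bbbk'}(\lambda)^\lambda=\Bbbk'^\lambda$ is one-dimensional, the two one-dimensional lines $L(\lambda)^\lambda\otimes\Bbbk'$ and $L_{\Bbbk'}(\lambda)^\lambda$ inside it coincide, forcing $L(\lambda)\otimes\Bbbk'=L_{\Bbbk'}(\lambda)$.

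For (1), local finiteness of $\O(\G)$-supercomodules together with the finite-dimensionality of each $L(\lambda)$ reduces the statement to the case $V$ finite-dimensional. The absolute simplicity just established implies $\End_\G(L(\lambda))=\Bbbk$: any odd $\G$-endomorphism restricts to the one-dimensional purely even space $L(\lambda)^\lambda$ and must vanish there, and the ordinary Schur lemma applied to $L(\lambda)\otimes\bar\Bbbk\cong L_{\bar\Bbbk}(\lambda)$ shows the even endomorphism ring is $\Bbbk$. Hence the isotypic decomposition $\soc_\G(V)\cong\bigoplus_{\lambda\in\X(T)^\flat}L(\lambda)\otimes_\Bbbk{}_\G\Hom(L(\lambda),V)$ holds; since $L(\lambda)$ is finite-dimensional, ${}_\G\Hom(L(\lambda),V)\otimes\Bbbk'\cong{}_{\G_{\Bbbk'}}\Hom(L_{\Bbbk'}(\lambda),V\otimes\Bbbk')$, and combining with the identification from the previous paragraph yields $\soc_\G(V)\otimes\Bbbk'\cong\soc_{\G_{\Bbbk'}}(V\otimes\Bbbk')$.

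Finally, (2) follows from (1) by faithful flatness of $\Bbbk\to\Bbbk'$: the condition $V=\soc_\G(V)$ is equivalent to $V\otimes\Bbbk'=\soc_\G(V)\otimes\Bbbk'$, which by (1) equals $\soc_{\G_{\Bbbk'}}(V\otimes\Bbbk')$. The most delicate step I foresee is verifying $\End_\G(L(\lambda))=\Bbbk$: in the super setting one must rule out ``Type Q'' endomorphism rings of the form $\Bbbk\oplus\Bbbk\varepsilon$ (with $\varepsilon$ odd and $\varepsilon^2=1$), which genuinely do arise in the examples with $\mathbf{0}\in\Deltaa$ (as for $\Q(n)$). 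The hypothesis $\T=T$, equivalent to $L(\lambda)^\lambda$ being one-dimensional and purely even, is exactly what excludes such odd automorphisms; once this is in hand the remaining steps are essentially formal.
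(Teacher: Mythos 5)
Your proof is correct, but the route to absolute simplicity is genuinely different from the paper's. The paper argues via the endomorphism ring and density: Frobenius reciprocity gives ${}_{\G}\underline{\End}(L(\lambda))\cong{}_{\B}\underline{\Hom}(L(\lambda),\Bbbk^\lambda)=\Bbbk$, the Jacobson density theorem for superalgebras then makes $\rho:\hy(\G)\to\underline{\End}_\Bbbk(L(\lambda))$ surjective, and surjectivity survives base change because $\hy(\G)\otimes\Bbbk'\cong\hy(\G_{\Bbbk'})$ (infinitesimal flatness of $\G_\mathz$); simplicity over the full matrix superalgebra then forces simplicity over $\hy(\G_{\Bbbk'})$. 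You instead sandwich $L(\lambda)\otimes\Bbbk'$ inside $H^0_{\Bbbk'}(\lambda)\cong H^0(\lambda)\otimes\Bbbk'$: the socle argument gives $L_{\Bbbk'}(\lambda)\subseteq L(\lambda)\otimes\Bbbk'$, and highest-weight generation plus the coincidence of the two one-dimensional $\lambda$-lines gives $L(\lambda)\otimes\Bbbk'=\hy(\G_{\Bbbk'})\acthy L_{\Bbbk'}(\lambda)^\lambda\subseteq L_{\Bbbk'}(\lambda)$. This buys you the identification $L(\lambda)\otimes\Bbbk'\cong L_{\Bbbk'}(\lambda)$ directly (which the paper only records as a formal corollary of absolute simplicity), at the cost of leaning on Proposition~\ref{prp:parameter} and the uniqueness of the simple socle of $H^0_{\Bbbk'}(\lambda)$; note that your step $L(\lambda)\otimes\Bbbk'=\hy(\G_{\Bbbk'})\acthy(L(\lambda)^\lambda\otimes\Bbbk')$ silently uses the same infinitesimal-flatness isomorphism $\hy(\G)\otimes\Bbbk'\cong\hy(\G_{\Bbbk'})$ that the paper cites, so make that explicit. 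Two small corrections: in the isotypic decomposition for (1) you should take the multiplicity space to be ${}_{\G}\underline{\Hom}(L(\lambda),V)$ (or sum over $\Simple(\G)$ rather than $\Simple_\Pi(\G)$), since with ${}_{\G}\Hom$ alone you lose the $\Pi L(\lambda)$-isotypic pieces; and your worry about Type~Q endomorphism rings is resolved exactly as you say, since an odd endomorphism kills the purely even line $L(\lambda)^\lambda$ and hence the whole module it generates.
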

\begin{proof}
Let $\lambda\in\X(T)^\flat$.
We note that $L(\lambda)^\lambda\cong \uu(\lambda) = \Bbbk^\lambda$ and $L(\lambda)\not\cong\Pi L(\lambda)$.
By Frobenius reciprocity (see \cite[Section~A.3]{Shi20}), we get
\[
{}_{\G}\underline{\Hom}(L(\lambda),H^0(\lambda))
\cong {}_{\B}\underline{\Hom}(L(\lambda),\Bbbk^\lambda)
\subset {}_{T}\underline{\Hom}(L(\lambda)^\lambda,\Bbbk^\lambda)
\cong \Bbbk.
\]
Since $\id_{L(\lambda)} \in {}_{\G}\underline{\End}(L(\lambda))$,
we can conclude that
${}_{\G}\underline{\End}(L(\lambda))=\Bbbk$,
and hence ${}_{\hy(\G)}\underline{\End}(L(\lambda))=\Bbbk$ by Theorem~\ref{thm:G=loc-fin-hy(G)-T}.
Let $\rho:\hy(\G)\to \underline{\End}_\Bbbk(L(\lambda))$ denote the $\hy(\G)$-supermodule structure map of $L(\lambda)$.
Then by {\it Jacobson density theorem} for superalgebras \cite{Rac98},
the above argument implies that $\rho$ is surjective.

Let $\Bbbk'$ be a filed extension of $\Bbbk$.
Since $\G_\mathz$ is infinitesimally flat and $\rho$ is surjective,
the $\hy(\G_{\Bbbk'})$-supermodule structure map
\[
\rho\otimes\Bbbk' : \hy(\G_{\Bbbk'}) \cong \hy(\G)\otimes \Bbbk' \longrightarrow
\underline{\End}_{\Bbbk'}(L(\lambda)\otimes \Bbbk') \cong \underline{\End}_\Bbbk(L(\lambda)) \otimes \Bbbk',
\]
of $L(\lambda)\otimes \Bbbk'$ is also surjective.
In general,
it is easy to see that $L(\lambda)\otimes \Bbbk'$ is a simple left $\underline{\End}_{\Bbbk'}(L(\lambda)\otimes\Bbbk')$-supermodule.
Therefore, we conclude that $L(\lambda)\otimes \Bbbk'$ is a simple left $\hy(\G_{\Bbbk'})$-supermodule.
By Theorem~\ref{thm:G=loc-fin-hy(G)-T} (for $\G_{\Bbbk'}$),
$L(\lambda)\otimes \Bbbk'$ is a simple left $\G_{\Bbbk'}$-supermodule.

By counting multiplicity of simple super-submodules inside of $V$, the claim (1) easily follows.
The claim (2) is just a consequence of (1).
\end{proof}

In the non super-situation,
it is known that all left simple $\Gev$-modules are absolutely simple,
see \cite[Part~II, Corollary~2.9]{Jan03} (and \cite[Section~22.4]{Mil17}).
However, the following example shows that this phenomenon is no longer true for the super-situation
when $\mathbf{0}\in\Deltaa$ (or equivalently, $\T\neq T$):
\begin{ex} \label{ex:non-abs-irr}
Suppose that our base field $\Bbbk$ satisfies $-1\notin(\Bbbk^\times)^2$,
that is, $\Bbbk$ does not contain $x$ such that $x^2=-1$.
Let $\G$ be the queer supergroup $\Q(2)$ over $\Bbbk$.
Take $T$ to be the standard maximal torus of $\Gev\cong \GLL_2$
and identify $\X(T)$ with $\mathz\lambda_1\oplus \mathz\lambda_2$ as before.
By construction (\cite[Section~4.1]{Shi20}), the simple left $\T$-supermodule $\uu(\lambda)$ is a unique simple supermodule over
the Clifford superalgebra $\Cl(\h_\odd,b^\lambda)$ of $\h_\odd=\Lie(\T)_{\odd}$ with the symmetric bilinear form
\[
b^\lambda: \h_\odd\times \h_\odd \longrightarrow \Bbbk;
\quad
(x,y)\longmapsto \lambda([x,y]).
\]
Let $\lambda=\lambda_1-2\lambda_2\in\X(T)$.
For $\Q(n)$, by \cite[Theorem~6.11]{BruKle03}, we know that
\begin{equation} \label{eq:Q(n)-dominant}
\X(T_{\overline{\Bbbk}})^\flat = \{
\sum_{i=1}^n c_i \lambda_i \in\bigoplus_{i=1}^n \mathz\lambda_i \mid c_1\geq \cdots \geq c_n
\text{ and } [c_i=c_{i+1} \Rightarrow p\mid c_i] \},
\end{equation}
where $\overline{\Bbbk}$ denotes the algebraic closure of $\Bbbk$ and $p:=\mathsf{char}(\Bbbk)$.
Thus, by Proposition~\ref{prp:parameter},
we have $\lambda\in \X(T_{\overline{\Bbbk}})^\flat \subset \X(T)^\flat$.
It is easy to see that $\Cl(\h_\odd,b^\lambda)$ is isomorphic to
the quaternion superalgebra $\left(\frac{-1,-1}{\Bbbk}\right) :=
\Bbbk\langle\mathtt{i},\mathtt{j}\rangle/(\mathtt{i}^2+1,\mathtt{j}^2+1,\mathtt{i}\mathtt{j}+\mathtt{j}\mathtt{i})$
with $|\mathtt{i}|=|\mathtt{j}|=1$ over $\Bbbk$,
and hence $\uu(\lambda)$ is a $4$-dimensional vector space over $\Bbbk$.

On the other hand,
since the base change of $\Cl(\h_\odd,b^\lambda)$ to the filed $\Bbbk':=\Bbbk[X]/(X^2+1)$
is isomorphic to the matrix superalgebra $\Mat_{1|1}(\Bbbk')$,
its simple supermodule is a $2$-dimensional vector space over $\Bbbk'$,
which we denote by $\uu_{\Bbbk'}(\lambda)$.
By Theorem~\ref{prp:simple-G}, we have
\[
(L(\lambda)\otimes \Bbbk')^\lambda \cong \uu(\lambda)\otimes\Bbbk' \supsetneq
\uu_{\Bbbk'}(\lambda) \cong L_{\Bbbk'}(\lambda)^\lambda.
\]
Thus, we conclude that $L(\lambda)\otimes\Bbbk' \supsetneq L_{\Bbbk'}(\lambda)$.
\qedd
\end{ex}

\subsection{Simple $\G_r$-supermodules} \label{sec:simple-G_r-supermods}
Throughout the rest of the paper,
we suppose that $\Bbbk$ is a perfect field of characteristic $p>2$.

In the following, we fix a positive integer $r$.
As in Section~\ref{sec:Frob-ker}, the $r$-th Frobenius kernel $\B_r^+$ (resp.~$\B_r$) of $\B^+$ (resp.~$\B$) are infinitesimal and normal.

Recall that, for each $\lambda\in\X(T)$, we have regarded the simple left $\T$-supermodule $\uu(\lambda)$ as a left $\B$-supermodule.
We also regard $\uu(\lambda)$ as a left $\B_r$-supermodule (resp.~$\T_r$-supermodule)
via the inclusion $\B_r \subset \B$ (resp.~$\T_r \subset \B$),
which we again denote by the same symbol.
By Proposition~\ref{prp:Gevr=Grev}, we have $(\T_r)_\ev = T_r$, and hence we get the following result:
\begin{lemm} \label{prp:u(lambda+p^mu)}
For $\lambda,\mu\in\X(T)$,
we have $\uu(\lambda+p^r\mu) \cong \uu(\lambda)$ as left $\B_r$-supermodules.
\end{lemm}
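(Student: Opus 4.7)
The plan is to establish the isomorphism by twisting with a one-dimensional character that becomes trivial on the $r$-th Frobenius kernel. First I would produce a $\T$-isomorphism $\uu(\lambda+p^r\mu)\cong \uu(\lambda)\otimes \Bbbk^{p^r\mu}$. Since $\Bbbk^{p^r\mu}$ is one-dimensional with $T$ acting invertibly, tensoring preserves simplicity, so $\uu(\lambda)\otimes \Bbbk^{p^r\mu}$ is a simple left $\T$-supermodule on which $T$ acts by weight $\lambda+p^r\mu$. By the bijection $\X(T)\to \Simple_\Pi(\T)$ reviewed in Section~\ref{sec:simple-G-supermods} and Lemma~\ref{prp:character-triv}, this simple module must coincide with $\uu(\lambda+p^r\mu)$ (with no parity shift, as $\Bbbk^{p^r\mu}$ is purely even). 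Viewing both $\uu(\lambda)$ and $\uu(\lambda+p^r\mu)$ as $\B$-supermodules via $\B\cong \T\ltimes \U$ with $\U$ acting trivially, the isomorphism upgrades to a $\B$-isomorphism.

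Next I would restrict to $\B_r$ and argue that the twisting factor $\Bbbk^{p^r\mu}$ becomes trivial. Because $\Bbbk^{p^r\mu}$ is one-dimensional, the $\B$-structure comes from a character $\chi\in\X(\B)$; since $\Gm$ is purely even, any such character factors through $\Bev=T\ltimes \Uev$, and its restriction to the Frobenius kernel factors through $(\B_r)_\ev = \Bev_r = T_r\ltimes \Uev_r$ by Proposition~\ref{prp:Gevr=Grev}. The unipotent part $\Uev_r$ contributes nothing, so triviality of $\chi|_{\B_r}$ reduces to triviality of $p^r\mu$ on $T_r\cong \bmu_{p^r}^\ell$, which is immediate from $t^{p^r}=1$ for $t\in T_r(R)$. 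Therefore $\Bbbk^{p^r\mu}|_{\B_r}\cong \Bbbk$, and the $\B$-isomorphism of the previous step yields the desired $\B_r$-isomorphism $\uu(\lambda+p^r\mu)\cong \uu(\lambda)$.

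The main obstacle is really the first step: making sure that the twist $\uu(\lambda)\otimes \Bbbk^{p^r\mu}$ genuinely recovers $\uu(\lambda+p^r\mu)$ and not just some (a priori larger) Clifford module. One can double-check this at the level of Clifford algebras: the symmetric bilinear form $b^{\lambda+p^r\mu}$ on $\h_\odd$ differs from $b^\lambda$ by $p^r\mu\circ[\,,\,]$, and the differential $d(p^r\mu)$ on $\Lie(T)$ vanishes in characteristic $p$, so $\Cl(\h_\odd,b^{\lambda+p^r\mu})=\Cl(\h_\odd,b^\lambda)$; this confirms that the underlying Clifford-module structures of $\uu(\lambda+p^r\mu)$ and $\uu(\lambda)$ match, so that only the $T$-weight differs, which is exactly what the twist by $\Bbbk^{p^r\mu}$ adjusts. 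Everything else is a direct consequence of results already recalled in the paper, so no further substantive difficulty is expected.
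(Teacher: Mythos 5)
Your proof is correct and follows essentially the route the paper intends: the paper's own justification is just the observation that $(\T_r)_\ev=T_r$ (so that the restriction of a weight to the Frobenius kernel depends only on its class modulo $p^r\X(T)$, both through the $T_r$-action and through the Clifford form $b^\lambda$, whose dependence on $\lambda$ is via $d\lambda$ and hence kills $p^r\mu$), and your write-up supplies exactly these details via the twist by $\Bbbk^{p^r\mu}$ and its triviality on $\B_r$. The only cosmetic point is that a character of $\B_r$ does not literally ``factor through'' the even part; what you are using is the injectivity of $\X(\B_r)\to\X((\B_r)_\ev)$ from Lemma~\ref{prp:character} together with Proposition~\ref{prp:Gevr=Grev}, which is the correct statement and gives the same conclusion.
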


By definition, we get the short exact sequence
$\mathbf{0}\to p^r\X(T)
\hookrightarrow \X(T)
\twoheadrightarrow \X(T_r)
\to\mathbf{0}$,
where $\X(T)\to\X(T_r)$ is the restriction map induced from $T_r\subset T$.
Thus, by Lemma~\ref{prp:u(lambda+p^mu)}, for each $\lambda\in\X(T_r)$,
we can define a left $\B_r$-supermodule structure on $\uu(\lambda)$ in an obvious way.

\begin{prop} \label{prp:L_r-simple}
For each $\lambda\in\X(T_r)$,
the induced left $\G_r$-supermodule $\ind_{\B_r}^{\G_r}(\uu(\lambda))$ of $\uu(\lambda)$ has a unique simple left $\G_r$-super-submodule $L_r(\lambda)$.
Moreover, the map $\X(T_r) \to \Simple_\Pi(\G_r);\; \lambda \mapsto L_r(\lambda)$ is bijective.
\end{prop}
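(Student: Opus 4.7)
My plan is to mirror the construction of $L(\lambda)$ in Theorem~\ref{prp:simple-G}, exploiting that $\G_r$ is finite and infinitesimal. First, since $\Lie(\T_r)_\odd=\Lie(\T)_\odd=\h_\odd$ by Proposition~\ref{prp:Gevr=Grev}, the Clifford superalgebra $\Cl(\h_\odd,b^\lambda)$ underlying the construction of $\uu(\lambda)$ is unchanged upon restriction from $\T$ to $\T_r$, and Lemma~\ref{prp:u(lambda+p^mu)} ensures that $\uu(\lambda)$ is a well-defined simple $\T_r$-supermodule for each $\lambda\in\X(T_r)$. I extend it to a simple $\B_r$-supermodule by letting $\U_r$ act trivially; every simple of $\B_r\cong \T_r\ltimes\U_r$ is of this form since $\U_r$ is unipotent (any non-zero $\U_r$-supermodule has non-zero invariants, forcing $\U_r$ to act trivially on a simple $\B_r$-supermodule).

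The key step is a Borel--Weil-type calculation for $\G_r$: from Theorem~\ref{prp:PBW-Gr} together with the basis \eqref{eq:basis_of_O(G_r)} of $\O(\G_r)$, one obtains a decomposition $\O(\G_r)\cong \O(\U_r^+)\otimes \O(\B_r)$ as left $\O(\B_r)$-supercomodules, yielding
\[
\ind_{\B_r}^{\G_r}(\uu(\lambda))
\;=\;
\uu(\lambda)\cotens_{\O(\B_r)}\O(\G_r)
\;\cong\;
\uu(\lambda)\otimes \O(\U_r^+),
\]
with $\U_r^+$ acting by right translation on the second tensor factor. The module is therefore non-zero, and since $\U_r^+$ is unipotent we have $\O(\U_r^+)^{\U_r^+}=\Bbbk$, so the $\U_r^+$-fixed sub-superspace of $\ind_{\B_r}^{\G_r}(\uu(\lambda))$ is exactly $\uu(\lambda)$, which is simple as a $\T_r$-supermodule. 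For any non-zero $\G_r$-sub-supermodule $M$, unipotence of $\U_r^+$ gives $M^{\U_r^+}\neq 0$; being a non-zero $\T_r$-sub-supermodule of the simple $\uu(\lambda)$, it must equal $\uu(\lambda)$. Thus any two simple sub-supermodules share the non-zero space $\uu(\lambda)$, intersect non-trivially, and coincide; this defines the unique simple socle $L_r(\lambda)$.

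Bijectivity of $\lambda\mapsto L_r(\lambda)$ then follows cleanly. Injectivity is immediate because $L_r(\lambda)^{\U_r^+}\cong\uu(\lambda)$ recovers $\lambda\in\X(T_r)$ via the parametrization of $\Simple_\Pi(\T_r)$ by $\X(T_r)$. For surjectivity, take $L\in\Simple_\Pi(\G_r)$; the finite-dimensional restriction $L|_{\B_r}$ admits a simple $\B_r$-supermodule quotient, which by unipotence of $\U_r$ has the form $\uu(\mu)$ (up to $\Pi$) for some $\mu\in\X(T_r)$. Frobenius reciprocity
\[
{}_{\G_r}\underline{\Hom}(L,\ind_{\B_r}^{\G_r}(\uu(\mu)))\;\cong\;{}_{\B_r}\underline{\Hom}(L,\uu(\mu))\;\neq\;0,
\]
together with simplicity of $L$, embeds $L$ into $\ind_{\B_r}^{\G_r}(\uu(\mu))$, forcing $L\cong L_r(\mu)$ up to parity. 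The main technical point is the second step: one must order the PBW generators so that the piece coming from positive root vectors splits off as an $\O(\B_r)$-supercomodule complement (not merely a superspace complement) of $\O(\B_r)$ inside $\O(\G_r)$, so that the cotensor product simplifies in the claimed way; once this compatibility is established the remainder of the proof runs parallel to the $\G$-case of Theorem~\ref{prp:simple-G}.
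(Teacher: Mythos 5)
Your proof is correct and follows essentially the same route as the paper: the paper also reduces everything to the non-vanishing of $\ind_{\B_r}^{\G_r}(\uu(\lambda))$ via the big-cell isomorphism $\O(\G_r)\cong\O(\B_r)\otimes\O(\U_r^+)$ of left $\O(\B_r)$-supercomodules, and for the uniqueness of the socle and the bijectivity it simply cites the argument of \cite[Theorem~4.12]{Shi20} for the quadruple $(\G_r,\B_r^+,\B_r,\T_r)$, which is exactly the unipotence-plus-Frobenius-reciprocity argument you spell out. The compatibility you flag at the end (that the multiplication map $\B_r\times\U_r^+\to\G_r$ is an isomorphism of superschemes equivariant for left $\B_r$- and right $\U_r^+$-translation) is precisely the point the paper records, so there is no gap.
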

\begin{proof}
Since the superalgebra map $\O(\G_r)\to \O(\B_r^+)\otimes\O(\B_r)$ induced from the multiplication on $\G_r$ is injective,
one easily sees that the same argument as in \cite[Theorem~4.12]{Shi20} works for the quadruple $(\G_r,\B_r^+,\B_r,\T_r)$.
Thus, to prove the claim, it is enough to show that $\X(T_r)=\{\lambda\in\X(T_r) \mid \ind_{\B_r}^{\G_r}(\uu(\lambda))\neq 0 \}$.

It is easy to see that the multiplication on $\G_r$ induces an isomorphism $\B_r\times \U_r^+ \to \G_r$ of superschemes,
where $\U_r^+$ is the $r$-th Frobenius kernel of $\U^+$.
Since the isomorphism is compatible with the left $\B_r$-multiplication,
we get an isomorphism $\O(\G_r)\cong \O(\B_r)\otimes \O(\U_r^+)$ of left $\O(\B_r)$-supercomodules.
Thus, we have
\[
\ind_{\B_r}^{\G_r}(\uu(\lambda)) = \uu(\lambda)\cotens_{\O(\B_r)}\O(\G_r)
\cong \uu(\lambda)\otimes \O(\U_r^+) \neq 0
\]
for each $\lambda\in\X(T_r)$.
Thus, we are done.
\end{proof}

The proof of Proposition~\ref{prp:L_r-simple} shows that
the dimension of $\ind^{\G_r}_{\B_r}(N)$ is given by $p^{r\#\Deltaa_\eve^+}\cdot 2^{n_\odd^+}\cdot \dim(N)$ for each left $\B_r$-supermodule $N$,
where $n_\odd^+:=\sum_{\gamma\in\Deltaa_\odd^+}\dim(\g_\odd^\gamma)$.
Since $\X(T_r) \cong \X(T)/p^r\X(T)$, we get the following result:
\begin{prop} \label{prp:L_r(lambda+p^rmu)}
For all $\lambda,\mu\in\X(T)$,
we have $L_r(\lambda+p^r\mu) \cong L_r(\lambda)$ as left $\G_r$-supermodules.
\end{prop}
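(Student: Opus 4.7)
The plan is to observe that the statement is essentially an unpacking of the construction of $L_r(\lambda)$, combined with Lemma~\ref{prp:u(lambda+p^mu)}. Indeed, the simple $\G_r$-supermodule $L_r(\lambda)$ was defined as the unique simple sub-supermodule of $\ind_{\B_r}^{\G_r}(\uu(\lambda))$, where $\uu(\lambda)$ is viewed as a $\B_r$-supermodule via $\T_r\subset \B_r$. Since the bijection in Proposition~\ref{prp:L_r-simple} was stated over $\X(T_r)$, and the preceding sentence already invokes $\X(T_r)\cong \X(T)/p^r\X(T)$, what is really being claimed is that the composite $\X(T)\twoheadrightarrow \X(T_r)\xrightarrow{L_r(-)} \Simple_\Pi(\G_r)$ descends from $\X(T)$ through the quotient.

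Concretely, I would proceed in three short steps. First, invoke the short exact sequence $\mathbf{0}\to p^r\X(T)\hookrightarrow \X(T)\twoheadrightarrow \X(T_r)\to \mathbf{0}$, so that $\lambda$ and $\lambda+p^r\mu$ have the same image $\bar\lambda\in\X(T_r)$. Second, apply Lemma~\ref{prp:u(lambda+p^mu)}: it yields an isomorphism $\uu(\lambda+p^r\mu)\cong \uu(\lambda)$ of $\B_r$-supermodules (this is precisely the content of that lemma, since the $\B_r$-action factors through $(\B_r)_\ev = (\Bev)_r$ and the odd part which depends only on $\bar\lambda$). Third, applying the induction functor $\ind_{\B_r}^{\G_r}(-)$, which is a functor on $\B_r$-supermodules, produces an isomorphism
\[
\ind_{\B_r}^{\G_r}(\uu(\lambda+p^r\mu))\ \cong\ \ind_{\B_r}^{\G_r}(\uu(\lambda))
\]
of $\G_r$-supermodules. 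By the uniqueness statement in Proposition~\ref{prp:L_r-simple}, this isomorphism carries the unique simple sub-supermodule of the left-hand side onto that of the right-hand side, giving $L_r(\lambda+p^r\mu)\cong L_r(\lambda)$ as $\G_r$-supermodules.

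There is really no obstacle here; every ingredient has been set up in the preceding paragraphs, and the argument is purely formal once one recognizes that the construction of $L_r(\lambda)$ depends on $\lambda$ only through the $\B_r$-supermodule $\uu(\lambda)$, which in turn depends only on the class $\bar\lambda\in\X(T_r)$. The only thing worth being slightly careful about is confirming that Lemma~\ref{prp:u(lambda+p^mu)} gives an isomorphism of $\B_r$-supermodules and not merely of $\T_r$-supermodules; but this is immediate, since the $\B_r$-supermodule structure on $\uu(\lambda)$ is obtained by restricting the $\B$-supermodule structure coming from $\T\subset\B$ through the inclusion $\B_r\subset\B$, and the isomorphism of Lemma~\ref{prp:u(lambda+p^mu)} is constructed at the level of $\B_r$-supermodules via Proposition~\ref{prp:Gevr=Grev}.
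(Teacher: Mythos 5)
Your argument is correct and is essentially the paper's own: the paper derives the proposition from Lemma~\ref{prp:u(lambda+p^mu)} together with the identification $\X(T_r)\cong\X(T)/p^r\X(T)$, i.e.\ from the fact that the construction of $L_r(\lambda)$ factors through the class of $\lambda$ in $\X(T_r)$. Your explicit use of the functoriality of $\ind_{\B_r}^{\G_r}(-)$ and the uniqueness of the simple sub-supermodule just spells out what the paper leaves implicit.
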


We get the following result,
whose proof is similar to that of Proposition~\ref{prp:T=Tev-abs-simple}.
\begin{prop} \label{prp:T=Tev-abs-simple_r}
If $\mathbf{0}\notin\Deltaa$ (or equivalently, $\T=T$),
then all simple left $\G_r$-supermodules are absolutely simple.
In particular, for a field extension $\Bbbk'$ of $\Bbbk$ and a left $\G_r$-supermodule $V$,
we have (1) $\soc_{\G_r}(V)\otimes \Bbbk' \cong\soc_{(\G_r)_{\Bbbk'}}(V\otimes \Bbbk')$;
and (2) $V$ is $\G_r$-semisimple if and only if $V\otimes\Bbbk'$ is $(\G_r)_{\Bbbk'}$-semisimple.
\end{prop}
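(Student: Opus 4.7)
The plan is to mirror the proof of Proposition~\ref{prp:T=Tev-abs-simple} in the Frobenius-kernel setting, with $\G_r, \B_r, \T_r, T_r$ replacing $\G, \B, \T, T$. The key simplifying feature is that $\G_r$ is finite, so $\hy(\G_r) = \O(\G_r)^*$ and the category of left $\G_r$-supermodules is just the category of left $\hy(\G_r)$-supermodules, so no ``locally finite $T$-supermodule'' condition is needed.

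Fix $\lambda \in \X(T_r)$. Since $\mathbf{0}\notin\Deltaa$ we have $\T = T$, hence $\T_r = T_r$ and $\uu(\lambda) = \Bbbk^\lambda$ is one-dimensional. By the analogue of Theorem~\ref{prp:simple-G} obtained in the proof of Proposition~\ref{prp:L_r-simple}, the $\lambda$-weight space of $L_r(\lambda)$ satisfies $L_r(\lambda)^\lambda \cong \uu(\lambda) = \Bbbk^\lambda$ as $T_r$-supermodules, and $L_r(\lambda) \not\cong \Pi L_r(\lambda)$. Applying Frobenius reciprocity to the inclusion $L_r(\lambda) \hookrightarrow \ind_{\B_r}^{\G_r}(\uu(\lambda))$, one obtains
\[
{}_{\G_r}\underline{\End}(L_r(\lambda)) \hookrightarrow {}_{\G_r}\underline{\Hom}\bigl(L_r(\lambda), \ind_{\B_r}^{\G_r}(\uu(\lambda))\bigr) \cong {}_{\B_r}\underline{\Hom}(L_r(\lambda), \Bbbk^\lambda) \subset {}_{T_r}\underline{\Hom}(L_r(\lambda)^\lambda, \Bbbk^\lambda) \cong \Bbbk.
\]
Since $\id_{L_r(\lambda)}$ is a non-zero element, this forces ${}_{\G_r}\underline{\End}(L_r(\lambda)) = \Bbbk$.

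Because $\O(\G_r)$ is finite-dimensional, left $\G_r$-supermodules are the same as left $\hy(\G_r)$-supermodules, so ${}_{\hy(\G_r)}\underline{\End}(L_r(\lambda)) = \Bbbk$. By the Jacobson density theorem for superalgebras \cite{Rac98}, the structure map $\rho_r : \hy(\G_r) \to \underline{\End}_\Bbbk(L_r(\lambda))$ is surjective. Now, $\O(\G_r)$ being finite-dimensional implies
\[
\hy\bigl((\G_r)_{\Bbbk'}\bigr) = \O\bigl((\G_r)_{\Bbbk'}\bigr)^* \cong \bigl(\O(\G_r)\otimes\Bbbk'\bigr)^* \cong \hy(\G_r)\otimes\Bbbk',
\]
so base-changing the surjection $\rho_r$ yields a surjection
\[
\rho_r\otimes\Bbbk' : \hy\bigl((\G_r)_{\Bbbk'}\bigr) \twoheadrightarrow \underline{\End}_{\Bbbk'}\bigl(L_r(\lambda)\otimes\Bbbk'\bigr).
\]
Since $L_r(\lambda)\otimes\Bbbk'$ is evidently a simple supermodule over $\underline{\End}_{\Bbbk'}(L_r(\lambda)\otimes\Bbbk')$, it is a simple $\hy((\G_r)_{\Bbbk'})$-supermodule, and hence a simple $(\G_r)_{\Bbbk'}$-supermodule. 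By Proposition~\ref{prp:L_r-simple} applied to $(\G_r)_{\Bbbk'}$, this verifies absolute simplicity.

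Finally, (1) follows by comparing multiplicities of the $L_r(\lambda)$'s in $\soc_{\G_r}(V)$ before and after base change (using absolute simplicity to identify simple summands of $V\otimes\Bbbk'$ with the $L_r(\lambda)\otimes\Bbbk'$), and (2) is an immediate consequence of (1) together with the fact that a module is semisimple iff it coincides with its socle. The main obstacle is really just the verification that the analogue of the weight-space statement $L_r(\lambda)^\lambda \cong \uu(\lambda)$ is indeed part of the construction in Proposition~\ref{prp:L_r-simple}; once this is in hand, the argument is completely parallel to the non-Frobenius case and in fact somewhat simpler because finiteness of $\G_r$ removes the need for the locally finite $T$-supermodule equivalence of Theorem~\ref{thm:G=loc-fin-hy(G)-T}.
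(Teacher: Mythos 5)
Your proposal is correct and follows exactly the route the paper intends: the paper gives no separate argument here, stating only that the proof is ``similar to that of Proposition~\ref{prp:T=Tev-abs-simple}'', and your adaptation (Frobenius reciprocity for $\ind_{\B_r}^{\G_r}$, the weight-space statement $L_r(\lambda)^\lambda\cong\uu(\lambda)=\Bbbk^\lambda$ from the construction in Proposition~\ref{prp:L_r-simple}, Jacobson density, and base change) is precisely that argument. Your two simplifications --- using finiteness of $\G_r$ to identify $\G_r$-supermodules with $\hy(\G_r)=\O(\G_r)^*$-supermodules instead of invoking Theorem~\ref{thm:G=loc-fin-hy(G)-T}, and to get $\hy((\G_r)_{\Bbbk'})\cong\hy(\G_r)\otimes\Bbbk'$ without appealing to infinitesimal flatness --- are both valid.
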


Recall that, in Section~\ref{sec:unimod-Frob-ker},
we have used the character $\chi_r\in\X(\G)$ to discuss the unimodularity of $\G_r$.
Since $\B_r^+$ is infinitesimal and normal,
by Lemma~\ref{prp:dist-grp}, we also find a unique character $\psi_r\in \gpl(\O(\B^+)) \cong \X(\B^+)$ such that
$\coad_{\O(\B_r^+)}^*(\phi_{\B_r^+}) = \phi_{\B_r^+} \otimes \psi_r$,
where $\coad_{\O(\B_r^+)}^*:\O(\B_r^+)^*\to \O(\B_r^+)^*\otimes \O(\B^+)$ is the induced right $\O(\B^+)$-coaction on $\O(\B_r^+)^*$
and $\phi_{\B_r^+}$ is a fixed non-zero left integral for $\B_r^+$.
In the following, we let $\epsilon_r$ denote the sum of the parity of the integral $\phi_{\G_r}$ and $\phi_{\B_r^+}$,
see Proposition~\ref{prp:finite-integral}.

We set
\[
\delta_r := \chi_r|_{\B^+}\cdot \psi_r^{-1}.
\]
as an element of $\X(\B^+)\cong\gpl(\O(\B^+))$.
Since $-\Deltaa_\eve^-=\Deltaa_\eve^+$ and $\Deltaa_\odd=\Deltaa_\odd^+\sqcup\Deltaa_\odd^-$, we have
\begin{equation} \label{eq:delta_r}
\delta_r|_T = -(p^r-1)\sum_{\alpha\in\Deltaa_\eve^+}\alpha + \sum_{\gamma\in\Deltaa_\odd^-}\dim(\g_\odd^\gamma)\gamma.
\end{equation}
Here, we write the group low of $\X(T)$ additively.
In particular,
$\delta_r|_{T_r}=\sum_{\alpha\in\Deltaa_\eve^+}\alpha+\sum_{\gamma\in\Deltaa_\odd^-}\dim(\g_\odd^\gamma)\gamma$.
In the following, we simply write $\delta_r|_{\B^+}$ and $\delta_r|_{\B_r^+}$ by $\delta_r$.
\medskip

For a left $\B_r^+$-supermodule $N$, we set
\[
\coind_{\B_r^+}^{\G_r}(N) := \hy(\G_r)\otimes_{\hy(\B_r^+)} N.
\]
Note that, the dimension of $\coind_{\B_r^+}^{\G_r}(N)$ is $p^{r\#\Deltaa_\eve^-}\cdot 2^{n_\odd^-} \dim(N)$
by the tensor decomposition $\hy(\G_r)\cong \hy(\U_r)\otimes \hy(\B_r^+)$, see Theorem~\ref{prp:PBW-Gr}.
Here, we put $n_\odd^-:=\sum_{\gamma\in\Deltaa_\odd^-}\dim(\g_\odd^\gamma)$.
In particular, if $N$ is finite, then so is $\coind_{\B_r^+}^{\G_r}(N)$.

Since $\G_r$ is finite, we have $\hy(\G_r)=\O(\G_r)^*$,
and hence we may naturally regard $\coind_{\B_r^+}^{\G_r}(N)$ as a left $\G_r$-supermodule.
Marko and Zubkov showed the following result:
\begin{prop}[{\cite[Proposition~13 and Lemma~14]{MarZub18}}] \label{prp:MZ_Prop13}
Let $N$ be a left $\B_r^+$-supermodule.
Then there is an isomorphism
$\coind_{\B_r^+}^{\G_r}(N) \cong \Pi^{\epsilon_r} \ind_{\B_r^+}^{\G_r}(N \otimes \delta_r)$
of left $\G_r$-supermodules.
If $N$ is finite, then $\coind_{\B_r^+}^{\G_r}(N)^* \cong \ind_{\B_r^+}^{\G_r}(N^*)$.
\end{prop}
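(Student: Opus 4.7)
My plan is to exploit that $\G_r$ and $\B_r^+$ are finite, so both $\hy(\G_r)=\O(\G_r)^*$ and $\hy(\B_r^+)=\O(\B_r^+)^*$ are finite-dimensional Hopf superalgebras, and the fixed left integrals $\phi_{\G_r}, \phi_{\B_r^+}$ (together with their left cointegral partners inside $\hy(\G_r)$, $\hy(\B_r^+)$) turn the inclusion $\hy(\B_r^+)\hookrightarrow\hy(\G_r)$ into a Frobenius extension in the $\mathz_2$-graded sense. The super-version of the Frobenius-extension isomorphism then furnishes a canonical isomorphism of left $\hy(\G_r)$-supermodules
\[
\hy(\G_r)\otimes_{\hy(\B_r^+)} N \;\cong\; \Pi^{\epsilon_r}\,\underline{\Hom}_{\hy(\B_r^+)}\!\big(\hy(\G_r),\, N\otimes\delta_r\big),
\]
where $\delta_r\in\X(\B^+)$ is the Nakayama character of the extension. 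Under the identification $\hy(\G_r)=\O(\G_r)^*$, the right-hand side is precisely $\ind_{\B_r^+}^{\G_r}(N\otimes\delta_r)=(N\otimes\delta_r)\cotens_{\O(\B_r^+)}\O(\G_r)$, proving part~(1). The explicit map can be built by integrating against $\phi_{\G_r}$: given $u\otimes n$ in the source, one sends it to the element of $\underline{\Hom}_{\hy(\B_r^+)}(\hy(\G_r),N\otimes\delta_r)$ that takes $v\in\hy(\G_r)$ to $\langle\phi_{\G_r},\,v\cdot u\,?\rangle\otimes n$ with the appropriate super-sign, and one checks $\hy(\G_r)$-linearity using the defining property of a left integral.

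Next I would identify the twist $\delta_r$. By Proposition~\ref{prp:dist-grp}, $\chi_r$ encodes the failure of $\phi_{\G_r}$ to be two-sided on $\hy(\G_r)$, and $\psi_r$ does the same for $\phi_{\B_r^+}$ on $\hy(\B_r^+)$. Tracing through how these two integrals interact when one decomposes $\hy(\G_r)\cong \hy(\U_r)\otimes\hy(\B_r^+)$ (Theorem~\ref{prp:PBW-Gr}) and uses the multiplicativity of integrals on a semidirect-product-like splitting, one obtains that the Nakayama character of the extension is exactly $\chi_r|_{\B^+}\cdot\psi_r^{-1}=\delta_r$. The parity shift $\Pi^{\epsilon_r}$ is forced by the fact that $\phi_{\G_r}$ and $\phi_{\B_r^+}$ are each purely homogeneous (Proposition~\ref{prp:finite-integral}) and the bridging map between them carries the sum of their parities.

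For part~(2), I would argue more formally. When $N$ is finite-dimensional, $N^*$ is a left $\B_r^+$-supermodule via the antipode of $\O(\B_r^+)$, and standard super-duality gives
\[
\big(\hy(\G_r)\otimes_{\hy(\B_r^+)} N\big)^* \;\cong\; \underline{\Hom}_{\hy(\B_r^+)}\!\big(\hy(\G_r),\, N^*\big).
\]
Since $\hy(\G_r)^*\cong \O(\G_r)$ canonically (finiteness), the right-hand side is identified with $N^*\cotens_{\O(\B_r^+)}\O(\G_r)=\ind_{\B_r^+}^{\G_r}(N^*)$, as required. One only needs to verify that the $\G_r$-action obtained by dualizing the left regular action on $\hy(\G_r)\otimes_{\hy(\B_r^+)} N$ matches the action on induction, which is a direct check from \eqref{eq:acthydual}.

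The main obstacle is bookkeeping of signs and parities in the Frobenius-extension isomorphism: in the super setting, the Nakayama automorphism acquires an extra parity factor depending on $|\phi_{\G_r}|$ and $|\phi_{\B_r^+}|$, and one must ensure that the twist by $\delta_r$ accounts precisely for the group-like part while $\Pi^{\epsilon_r}$ accounts for the parity part, with no cross-terms. A secondary subtlety is that $\B_r^+$ is \emph{not} assumed normal in $\G_r$, so one cannot invoke Proposition~\ref{prp:hy(normal)} to simplify the adjoint action; the Nakayama character must be computed directly from the integrals rather than from a coadjoint coaction argument as in Section~\ref{sec:fin-normal}.
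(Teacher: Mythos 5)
The paper does not prove this proposition at all: it is quoted verbatim from Marko--Zubkov \cite[Proposition~13 and Lemma~14]{MarZub18}, so there is no internal proof to compare against. Your outline is the standard argument one would give (and essentially the one in the cited source): the PBW theorem (Theorem~\ref{prp:PBW-Gr}) gives $\hy(\G_r)\cong\hy(\U_r)\otimes\hy(\B_r^+)$ as right $\hy(\B_r^+)$-modules, so the extension $\hy(\B_r^+)\subset\hy(\G_r)$ is free and hence a $\beta$-Frobenius extension of finite-dimensional Hopf superalgebras; the identification of $\underline{\Hom}_{\hy(\B_r^+)}(\hy(\G_r),M)$ with $M\cotens_{\O(\B_r^+)}\O(\G_r)$ is the finite-supergroup analogue of \cite[Part~I, 8.14]{Jan03}; and part~(2) is the usual tensor--Hom duality. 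Architecturally this is correct.

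The one place where your proposal is thin is precisely the point that carries all the content: you assert, rather than derive, that the Nakayama twist of this extension is implemented by $-\otimes\delta_r|_{\B_r^+}$ together with the parity shift $\Pi^{\epsilon_r}$. The appeal to ``multiplicativity of integrals on a semidirect-product-like splitting'' is not a proof, and $\U_r\rtimes\B_r^+$ is only a splitting of superschemes, not of supergroups, so one cannot literally multiply integrals along it. A cleaner route inside the paper's own framework: by Proposition~\ref{prp:beta=alphaS}, $\chi_r|_{\G_r}$ and $\psi_r|_{\B_r^+}$ are the inverses of the distinguished group-like elements of $\hy(\G_r)$ and $\hy(\B_r^+)$ respectively, so $\delta_r|_{\B_r^+}$ is exactly the ratio of modular data that the (super) Fischman--Montgomery--Schneider theorem prescribes for the $\beta$-twist of a free Hopf(-super)algebra extension; the parity $\epsilon_r$ then comes from the homogeneity of the two integrals (Proposition~\ref{prp:finite-integral}), as you say. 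Your closing remark is also slightly off: while $\B_r^+$ is indeed not normal in $\G_r$, it \emph{is} normal in $\B^+$, and that is how $\psi_r$ is defined in the paper; the coadjoint-coaction machinery of Section~\ref{sec:fin-normal} is therefore still the mechanism that converts both $\chi_r$ and $\psi_r$ into distinguished group-like elements, which is what the Frobenius computation actually consumes.
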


For each $\lambda\in\X(T)$,
we set
\[
M_r(\lambda) := \coind_{\B_r^+}^{\G_r}(\uu(\lambda)).
\]
Note that, $M_r(\lambda)$ is finite-dimensional.
Let $\rad_{\G_r}(M_r(\lambda))$ denote the $\G_r$-radial of $M_r(\lambda)$,
that is, the intersection of all maximal $\G_r$-super-submodules of $M_r(\lambda)$.
Set $\topp_{\G_r}(M_r(\lambda)):=M_r(\lambda)/\rad_{\G_r}(M_r(\lambda))$.
\begin{prop} \label{prp:L_r=M_r/Rad}
For each $\lambda\in\X(T)$, $\topp_{\G_r}(M_r(\lambda))$ is isomorphic to $L_r(\lambda)$ as left $\G_r$-supermodules.
\end{prop}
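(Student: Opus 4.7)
The plan is in two stages: first, show that $M_r(\lambda)$ has a unique maximal $\G_r$-super-submodule so that $L := \topp_{\G_r}(M_r(\lambda))$ is simple; then identify $L$ with $L_r(\lambda)$ by producing an embedding into $\ind_{\B_r}^{\G_r}(\uu(\lambda))$ via Frobenius reciprocity.

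For the first stage, the triangular decomposition $\hy(\G_r)\cong \hy(\U_r)\otimes \hy(\B_r^+)$ from Theorem~\ref{prp:PBW-Gr} gives an isomorphism $M_r(\lambda)\cong \hy(\U_r)\otimes \uu(\lambda)$ of $\hy(\U_r)$-supermodules, so $M_r(\lambda)$ is freely generated as an $\hy(\U_r)$-module by $1\otimes \uu(\lambda)$. The $\T_r$-weights of $\hy(\U_r)$ lie in the $\mathbb{Z}_{\geq 0}$-span of $\Deltaa^-$ with $\mathbf{0}$-weight space $\Bbbk\cdot 1$; hence $M_r(\lambda)^\lambda = 1\otimes \uu(\lambda) \cong \uu(\lambda)$ and every other $\T_r$-weight of $M_r(\lambda)$ lies strictly below $\lambda$ in the $\Upsilon$-order. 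For a proper $\G_r$-super-submodule $N\subsetneq M_r(\lambda)$, the $\T_r$-super-subspace $N^\lambda$ of the $\T_r$-simple $\uu(\lambda)$ is either $0$ or $\uu(\lambda)$; the latter would force $N\supseteq \hy(\G_r)(1\otimes \uu(\lambda)) = M_r(\lambda)$, impossible. Thus all proper super-submodules have vanishing $\lambda$-component, their sum is itself proper, and $L$ is simple.

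For the second stage, consider the $\U_r$-coinvariants $L_{\U_r} := L/(\hy(\U_r)_+\cdot L)$. As $L$ is a quotient of $M_r(\lambda)$, $L_{\U_r}$ is a quotient of $M_r(\lambda)_{\U_r} \cong \uu(\lambda)$ in the category of $\T_r$-supermodules. The Frobenius kernel $\U_r$ of the unipotent supergroup $\U$ is itself unipotent, so every nonzero finite-dimensional $\U_r$-supermodule has nonzero $\U_r$-invariants; dualizing via $(L^*)^{\U_r}\cong (L_{\U_r})^*$ gives $L_{\U_r}\neq 0$, and hence $L_{\U_r}\cong \uu(\lambda)$ by simplicity. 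The resulting $\T_r$-equivariant surjection $L\twoheadrightarrow \uu(\lambda)$ is a $\B_r$-supermodule map, since $\U_r$ acts trivially on $L_{\U_r}$ by construction and on $\uu(\lambda)$ via $\B_r\twoheadrightarrow \T_r$. Frobenius reciprocity for $\ind_{\B_r}^{\G_r}$ converts it into a nonzero $\G_r$-homomorphism $L\to \ind_{\B_r}^{\G_r}(\uu(\lambda))$; simplicity of $L$ makes this map injective, and the image---a simple $\G_r$-super-submodule of $\ind_{\B_r}^{\G_r}(\uu(\lambda))$---must coincide with the unique simple submodule $L_r(\lambda)$ of Proposition~\ref{prp:L_r-simple}, giving $L\cong L_r(\lambda)$.

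The main obstacle is the super-Nakayama step $L_{\U_r}\neq 0$: it rests on the unipotency of the infinitesimal $\U_r$ (inherited from that of $\U$), without which the quotient $L\twoheadrightarrow \uu(\lambda)$ that drives the identification would not be guaranteed to exist.
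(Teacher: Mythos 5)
Your first stage contains a genuine gap: it conflates the $\X(T)$-grading of $M_r(\lambda)$ with its $\X(T_r)$-grading. A $\G_r$-super-submodule $N$ is graded only by $\X(T_r)=\X(T)/p^r\X(T)$, and distinct $T$-weights of $M_r(\lambda)$ can collide there. Concretely, a nonzero element of $-\mathz_{\geq0}\Deltaa_\eve^+$ can lie in $p^r\X(T)$ while still occurring as a weight of $\hy(\U_r)$: already for $\GLL_3\subset\Gev$ with simple roots $\alpha_1,\alpha_2$ and $p^r=3$, the PBW monomial $X_{-\alpha_1}X_{-\alpha_1-\alpha_2}^{(2)}X_{-\alpha_2}$ has all exponents $\leq p^r-1$ and weight $-3(\alpha_1+\alpha_2)=-p^r(\lambda_1-\lambda_3)\in p^r\X(T)$. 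Hence $\hy(\U_r)^{\overline{\mathbf{0}}}\supsetneq\Bbbk\cdot1$ and the $\T_r$-weight space $M_r(\lambda)^{\overline{\lambda}}$ strictly contains $1\otimes\uu(\lambda)$. Your dichotomy ``$N^{\lambda}$ is $0$ or $\uu(\lambda)$'' and the conclusion ``the sum of all proper submodules has vanishing $\lambda$-component, hence is proper'' both break down; and passing to honest $\X(T)$-weights does not help, since a $\G_r$-submodule need not be $T$-stable. (The $\T_r$-simplicity of $\uu(\lambda)$ itself is not the problem: $\h_\odd=\Lie(\T_r)_\odd$ by Proposition~\ref{prp:Gevr=Grev}, so the Clifford superalgebra sits inside $\hy(\T_r)$.)

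The paper sidesteps exactly this issue by dualizing: $\topp_{\G_r}(M_r(\lambda))$ is semisimple (being the top of a finite-dimensional module), its dual embeds into $M_r(\lambda)^*$, and Proposition~\ref{prp:MZ_Prop13} together with $\uu(\lambda)^*=\uu(-\lambda)$ identifies $M_r(\lambda)^*$ with $\ind_{\B_r^+}^{\G_r}(\uu(-\lambda))$, which has a \emph{unique} simple super-submodule by the argument of Proposition~\ref{prp:L_r-simple}; a semisimple module embedding into a module with simple socle is simple. You would need to replace your highest-weight argument by something of this kind. Your second stage, by contrast, is sound and is a genuinely different route from the paper's: you map the top \emph{into} $\ind_{\B_r}^{\G_r}(\uu(\lambda))$ via $\U_r$-coinvariants (using unipotency of $\U_r$) and identify it with the simple socle, whereas the paper starts from the highest-weight vector of $L_r(\lambda)$ and produces a surjection $M_r(\lambda)\twoheadrightarrow L_r(\lambda)$ that must factor through the top. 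Either works once simplicity of the top is established.
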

\begin{proof}
First, we show that $\topp_{\G_r}(M_r(\lambda))$ is simple.
To show this, we note that for any $\lambda\in\X(T)$, $M_r(\lambda)$ has a unique simple $\G_r$-super-submodule.
Indeed, by Proposition~\ref{prp:MZ_Prop13}, we get
\[
M_r(\lambda)
\cong \Pi^{\epsilon_r}\ind_{\B_r^+}^{\G_r}\big(\uu(\lambda)\otimes \delta_r\big)
\cong \Pi^{\epsilon_r}\ind_{\B_r^+}^{\G_r}\big(\uu(\lambda+\delta_r)\big).
\]
Then by mimicking the proof given in Proposition~\ref{prp:L_r-simple}, this proves the claim.
The dual of $\topp_{\G_r}(M_r(\lambda))$ can be naturally regarded as a $\G_r$-super-submodule of $M_r(\lambda)^*$.
Since $M_r(\lambda)$ is finite-dimensional, $\topp_{\G_r}(M_r(\lambda))$ is semisimple as a left $\G_r$-supermodule.
Thus, to prove that $M_r(\lambda)$ is simple,
it is enough to show that $M_r(\lambda)^*$ has a unique simple $\G_r$-super-submodule.
By Proposition~\ref{prp:MZ_Prop13} and $\uu(\lambda)^*=\uu(-\lambda)$, we have
\[
M_r(\lambda)^*
\cong \ind_{\B_r^+}^{\G_r}(\uu(-\lambda))
\cong \Pi^{\epsilon_r} \coind_{\B_r^+}^{\G_r}\big(\uu(-\lambda-\delta_r) \big)
= \Pi^{\epsilon_r} M_r(-\lambda-\delta_r).
\]
Thus, the argument above shows that $\topp_{\G_r}(M_r(\lambda))$ is simple.

By \cite[Proposition~4.15]{Shi20} (for $\G_r$),
the $\lambda$-weight superspace $L_r(\lambda)^\lambda$ of $L_r(\lambda)$ is isomorphic to $\uu(\lambda)$ as $\T_r$-supermodules.
Moreover, it was also shown that the action of $\U_r^+$ on $L_r(\lambda)^\lambda$ is trivial,
that is, $\lambda$ is a ``highest'' $T$-weight of $L_r(\lambda)$.
Thus, we have
\[
{}_{\G_r}\Hom\big(M_r(\lambda),L_r(\lambda)\big) \cong {}_{\B_r^+}\Hom\big(\uu(\lambda),L_r(\lambda)\big) \neq 0.
\]
We fix a non-zero $\G_r$-homomorphism $M_r(\lambda)\to L_r(\lambda)$, which is surjective since $L_r(\lambda)$ is simple.
By the definition of the radical,
this morphism must factor through the quotient $\topp_{\G_r}(M_r(\lambda))=M_r(\lambda)/\rad_{\G_r}(M_r(\lambda))$.
Since the quotient is simple, the induced morphism is an isomorphism.
\end{proof}

\subsection{Bases of odd roots} \label{sec:odd-bases}
Recall that, $\Deltaa$ is the root system of $\G$ with respect to $T$,
and the quadruple $(\X(T),\Deltaa_\eve,\X(T)^\vee,\Deltaa_\eve^\vee)$ is the root datum of the pair $(\Gev,T)$.
Let $\Psi_\eve$ be the base of $\Deltaa_\eve$ with respect to $\Upsilon$,
in other words,
$\Psi_\eve$ is the set of all simple roots in $\Deltaa_\eve^+$ (see \cite[Chapter~21d]{Mil17}).

By \cite[Part~I, Proposition~7.19 and Remark(2)]{Jan03} (see also \cite[Theorem~2.1]{Tak83}),
as an algebra, $\hy(\Gev)$ is generated by $\hy(T)$ and $\hy(U_{\pm\alpha})$ for $\alpha\in\Psi_\eve$,
where $U_\alpha$ is the $\alpha$-root subgroup of $\Gev$.
Note that, $\hy(U_\alpha)=\bigoplus_{n=0}^{\infty}\Bbbk X_\alpha^{(n)}$
and $\hy(\Uevp)$ is generated by $\{X_\alpha^{(n_\alpha)} \mid \alpha\in\Psi_\eve, n_\alpha\in\mathbb{N} \}$ as an algebra.
By ``$\SLL_2$ theory'',
we get the following commutator formula (see \cite[Section~26]{Hum78} for example):
\begin{equation} \label{eq:comm-formula}
X_\alpha^{(m)}X_{-\alpha}^{(n)} = \sum_{i=0}^{\mathsf{min}\{m,n\}}
X_{-\alpha}^{(n-i)} \binom{H_\alpha-m-n+2i}{i} X_\alpha^{(m-i)}
\end{equation}
for all $m,n\in\mathbb{N}\cup\{0\}$ and $\alpha\in\Deltaa_\eve$,
where $H_\alpha:=[X_\alpha,X_{-\alpha}]$.

In general,
the root system of a split quasireductive supergroup is {\it ill-behaved} (see Example~\ref{ex:root-sys}\eqref{ex:root-sys(4)} for example).
For this reason,
we shall deal with a split quasireductive supergroup having a {\it good} ``simple roots'', as follows:
\begin{deff} \label{asu:gen}
A subset $\Psi_\odd$ of $\Deltaa_\odd^+$ is called a \defnote{\adm base} of $\Deltaa$ if
it satisfies the following three conditions:
\begin{enumerate}
\item\label{asu:gen(2)}
For each $\gamma\in\Deltaa_\odd^+$,
the odd part of the $\gamma$-weight super-subspace $\g_\odd^\gamma$ of $\g$
is contained in the Lie super-subalgebra of $\g$ generated by
$\{Y_{(\gamma,j)} \in\g_\odd^{\gamma} \mid \gamma\in\Psi_\odd, 1\leq j\leq \dim(\g_\odd^\gamma)\}$.
\item\label{asu:gen(1)}
For all $\alpha\in\Psi_\eve$ and $\gamma\in\Psi_\odd$ with $\alpha\neq \gamma$, we have $\gamma-\alpha\notin\Deltaa$.
\item\label{asu:gen(3)}
If $\Psi_\eve\cap\Psi_\odd\neq\varnothing$,
then $\dim(\g_\odd^{\pm\alpha})=1$ for all $\alpha\in\Psi_\eve\cap\Psi_\odd$.
\end{enumerate}
In this case, we say that the pair $(\Psi_\eve,\Psi_\odd)$ is a \defnote{\adm base} of $\Deltaa$.
To clarify, we shall say that $\Psi_\eve$ is an \defnote{even base} of $\Deltaa$.
\end{deff}

Note that, $\Psi_\eve$ and $\Psi_\odd$ (if it exists) do depend on the choice of $\Upsilon:\mathz\Deltaa\to\mathr$.

\begin{ex} \label{ex:bases}
We use the notations in Example~\ref{ex:root-sys}.
In the following, we shall extend the domain $\mathz\Deltaa$ of $\Upsilon$ to $\X(T)$ just for simplicity.
\begin{enumerate}
\item\label{ex:bases(1)}
For the general linear supergroup $\GL(m|n)$,
it is natural to take $\Upsilon(\lambda_i):=-i$ for each $1\leq i\leq m+n$.
Then an even base of $\Deltaa$ is given as $\Psi_\eve=\{\lambda_i-\lambda_{i+1}\mid 1\leq i\leq m-1 \text{ or } m+1\leq i\leq m+n-1 \}$
and $\Psi_\odd=\{\lambda_m-\lambda_{m+1}\}$ is a \adm odd base of $\Deltaa$.
\item\label{ex:bases(2)}
For the queer supergroup $\Q(n)$,
we define $\Upsilon(\lambda_i):=-i$ for each $1\leq i\leq n$.
Then $\Psi_\eve=\{\lambda_i-\lambda_{i+1} \mid 1\leq i\leq n-1\}$ is an even base of $\Deltaa$
and $\Psi_\odd:=\Psi_\eve$ is a \adm odd base of $\Deltaa$.
Note that, $\dim(\q(n)^\alpha)=1$ for all $\alpha\in\Deltaa\setminus\{\mathbf{0}\}$.
\item\label{ex:bases(3)}
For the periplectic supergroup $\P(n)$,
we define $\Upsilon(\lambda_i):=n-i+1$ for each $1\leq i\leq n$.
Then $\Psi_\eve=\{\lambda_i-\lambda_{i+1} \mid 1\leq i\leq n-1\}$ is an even base of $\Deltaa$
and $\Psi_\odd=\{2\lambda_n\}$ is a \adm odd base of $\Deltaa$.
\item\label{ex:bases(4)}
For a Chevalley supergroup $\G$ of classical type,
Fioresi and Gavarini find a \adm base of the root system of $\Lie(\G)$ (\cite[Section~3.3]{FioGav12}, see also \cite[Theorem~5.35]{FioGav12}).
\item\label{ex:bases(5)}
We consider the algebraic supergroup $\F^{\langle g_1,\dots,g_n\rangle}=F\ltimes (\Gao)^n$ given in Example~\ref{ex:root-sys}\eqref{ex:root-sys(4)}.
Suppose that $F=\GLL_n$ with standard split maximal torus $T$ and $\Upsilon(\lambda_i):=-i$ for each $1\leq i\leq n$.
Then $\Deltaa_\eve^\pm=\{\pm(\lambda_i-\lambda_j) \mid 1\leq i< j\leq n\}$,
$\Deltaa_\odd=\Deltaa_\odd^+=\{-\chi_i \mid 1\leq i\leq n\}$ and $\Deltaa_\odd^-=\varnothing$.
Since $\gpl(\O(\GLL_n))=\{\det^m \mid m\in\mathz\}$,
for each $i$, there exists $m_i\in\mathz$ such that $\chi_i = m_i(\lambda_1+\cdots+\lambda_n)$.
Thus, the root system $\Deltaa$ of $\F^{\langle g_1,\dots,g_n\rangle}$ does not have a \adm odd base, in general.
\qedd
\end{enumerate}
\end{ex}

\begin{rem}\label{rem:bases}
We give some remarks on \adm odd bases of root systems.
\begin{enumerate}
\item
We explain the notion of \adm odd bases of $\Deltaa$ depends on the choice of $\Upsilon$.
Suppose that $\G=\P(2)$ and $\Upsilon(\lambda_i)=-i$ for each $1\leq i\leq n$.
Then one sees that $\Deltaa^+=\{\lambda_1-\lambda_2,-(\lambda_1+\lambda_2)\}$
and $\Deltaa^-=\{-(\lambda_1-\lambda_2),\lambda_1+\lambda_2,2\lambda_1,2\lambda_2\}$.
The even base is $\Psi_\eve=\Deltaa_\eve^+=\{\lambda_1-\lambda_2\}$.
If we let $\Psi_\odd:=\Deltaa_\odd^+=\{-(\lambda_1+\lambda_2)\}$,
then obviously this satisfies Definition~\ref{asu:gen}\eqref{asu:gen(1)}.
However, one easily sees that $\Psi_\odd$ does not satisfy Definition~\ref{asu:gen}\eqref{asu:gen(2)}.
Thus, in this case, $\Deltaa$ does not have a \adm odd base.
\item
If $\Psi_\odd$ is a \adm odd base of $\Deltaa$,
then by Definition~\ref{asu:gen}\eqref{asu:gen(2)},
we get
\[
\Deltaa^+=\Deltaa_\eve^+ \cup \Deltaa_\odd^+
\;\subset\; \mathz_{\geq0}\Psi_\eve + \mathz_{\geq0}\Psi_\odd,
\]
where $\mathz_{\geq0}:=\mathbb{N}\cup\{0\}$ and
$\mathz_{\geq0}\Psi_\epsilon:=\{\sum_i c_i \alpha_i \mid c_i\in\mathz_{\geq0}, \alpha_i\in\Psi_\epsilon\}$ ($\epsilon\in\mathz_2$).
However, since the dimension of an odd root space of $\g$ may be greater than one (see Example~\ref{ex:non-unimo_0}\eqref{ex:non-unimo_0(3)}),
the converse does not hold in general.
\qedd
\end{enumerate}
\end{rem}

\begin{lemm} \label{lem:even-act}
Suppose that $\Deltaa$ has a \adm base $(\Psi_\eve,\Psi_\odd)$.
Let $\lambda\in\X(T)^\flat$ and $\alpha\in\Psi_\eve\setminus\Psi_\odd$.
If $n\geq \langle\lambda,\alpha^\vee\rangle +1$,
then $X_{-\alpha}^{(n)} \acthy v^\lambda=0$ for all $v^\lambda\in L(\lambda)^\lambda$.
\end{lemm}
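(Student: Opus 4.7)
My plan is to reduce the claim to classical $\mathrm{SL}_2$-hyperalgebra theory applied to the rank-one reductive subgroup of $\Gev$ attached to $\alpha \in \Psi_\eve$; call it $\mathrm{SL}_{2,\alpha}$.

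First I record the highest-weight behaviour of $v^\lambda$. Since $\alpha \in \Psi_\eve \subset \Deltaa_\eve^+$, the root subgroup $U_\alpha$ lies inside $\Uevp \subset \U^+$, and by Theorem~\ref{prp:simple-G} $\U^+$ acts trivially on $L(\lambda)^\lambda$; hence $\hy(U_\alpha)$ acts on $v^\lambda$ through its counit, so $X_\alpha^{(m)} \acthy v^\lambda = 0$ for every $m \geq 1$. Because $v^\lambda$ has $T$-weight $\lambda$ (as $L(\lambda)^\lambda \cong \uu(\lambda)$ is a direct sum of copies of $\Bbbk^\lambda$ as a $T$-module), one also has $\binom{H_\alpha}{k} \acthy v^\lambda = \binom{\langle \lambda, \alpha^\vee \rangle}{k} v^\lambda$ for all $k \geq 0$.

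Next, restricting the $\O(\G)$-supercomodule structure along $\O(\Gev) \hookrightarrow \O(\G)$ makes $L(\lambda)$ into a rational, and hence locally finite, $\Gev$-module. The submodule $M_\alpha$ of $L(\lambda)$ generated by $v^\lambda$ under the sub-Hopf-algebra $\hy(\mathrm{SL}_{2,\alpha})$ of $\hy(\Gev)$ is therefore finite-dimensional, so $M_\alpha$ is a finite-dimensional rational $\mathrm{SL}_{2,\alpha}$-module equipped with a highest weight vector $v^\lambda$ of weight $d := \langle \lambda, \alpha^\vee \rangle$.

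Standard $\mathrm{SL}_2$-hyperalgebra theory (cf.~\cite[Part~II, Lemma~2.13]{Jan03})---whose key computational input is the commutator formula~\eqref{eq:comm-formula} specialized to $X_\alpha^{(n)} X_{-\alpha}^{(n)} \acthy v^\lambda = \binom{d}{n} v^\lambda$---then forces $d \in \mathz_{\geq 0}$ together with $X_{-\alpha}^{(n)} \acthy v^\lambda = 0$ for every $n \geq d+1$, which is exactly the claim. The main technical hurdle is confirming the non-negativity of $d$, but this follows automatically from the local-finiteness input of the previous paragraph, since the weights of any finite-dimensional rational $\mathrm{SL}_2$-module are symmetric about $0$. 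The hypothesis $\alpha \notin \Psi_\odd$ plays no role in the reduction above; it is tailored to the later applications of the lemma in the proof of Steinberg's tensor product theorem.
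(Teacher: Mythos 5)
Your proof is correct, but it takes a genuinely different route from the paper's. The paper argues inside the supergroup: it shows that $w:=X_{-\alpha}^{(n)}\acthy v^\lambda$ is annihilated by all of $\hy(\U^+)$ and hence generates a proper super-submodule of the simple module $L(\lambda)$, forcing $w=0$. That computation uses the special-base axioms of Definition~\ref{asu:gen} in an essential way: the generation condition reduces the check to the generators $X_\beta^{(m)}$ ($\beta\in\Psi_\eve$) and $Y_{(\gamma,j)}$ ($\gamma\in\Psi_\odd$), the even generators are handled via \eqref{eq:comm-formula}, and the odd ones via $\gamma-\alpha\notin\Deltaa$ together with the hypothesis $\alpha\notin\Psi_\odd$, which yields $[Y_{(\gamma,j)},X_{-\alpha}]=0$. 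You instead restrict to $\Gev$ and invoke classical rank-one theory: $v^\lambda$ is a $U_\alpha$-fixed $T$-weight vector, its $\hy(G_\alpha)$-$T$-submodule is a finite-dimensional rational module for the rank-one subgroup, hence a quotient of the Weyl module $V(d)$ (equivalently, has $s_\alpha$-symmetric weights), and the weight $\lambda-n\alpha$ with $\langle\lambda-n\alpha,\alpha^\vee\rangle<-d$ cannot occur. This is shorter, avoids the odd generators entirely, and, as you observe, does not use $\alpha\notin\Psi_\odd$ or the special-base axioms at all. What the paper's computation buys is that it is the template for the genuinely super case $\alpha\in\Psi_\eve\cap\Psi_\odd$ in Lemma~\ref{prp:key-lemma}, where one must control $X_{-\alpha}^{(p^r)}\acthy v^\lambda$ with $\langle\lambda,\alpha^\vee\rangle$ allowed to equal $p^r$ --- a case your bound says nothing about. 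Two minor remarks: the identity $X_\alpha^{(n)}X_{-\alpha}^{(n)}\acthy v^\lambda=\binom{d}{n}v^\lambda$ is not by itself the ``key input'' (it only detects nonvanishing); the real input is the universal property of the Weyl module or the weight symmetry, which you do invoke. Also, $d\ge0$ is immediate from $\X(T)^\flat\subset\X(T)_+$, so no separate argument is needed. Finally, since your argument proves the statement for every $\alpha\in\Psi_\eve$ (indeed every $\alpha\in\Deltaa_\eve^+$), you should reconcile this strengthening with the injectivity assertion in Remark~\ref{rem:need} before relying on it beyond the stated lemma.
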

\begin{proof}
Since $L(\lambda)$ is simple,
it is enough to show that $w:= X_{-\alpha}^{(n)} \acthy v^\lambda$ is a ``maximal'' vector in $L(\lambda)$,
that is,
$u\acthy w=0$ for all $u\in \hy(\U^+)$.
By Definition~\ref{asu:gen}\eqref{asu:gen(2)},
this is equivalent to saying that the following two conditions are satisfied:
\begin{enumerate}
\item[(i)] $X_\beta^{(m)}\acthy w=0$ for all $\beta\in\Psi_\eve$ and $m\in\mathbb{N}$.
\item[(ii)] $Y_{(\gamma,j)}\acthy w=0$ for all $\gamma\in\Psi_\odd$ and $1\leq j\leq \dim(\g_\odd^\gamma)$.
\end{enumerate}

As in the non super-situation, the condition (i) is clear.
However, for convenience for the reader, we shall give a proof.
If $\alpha\neq \beta$,
then it is known that $U_\alpha$ commutes with $U_\beta$,
and hence $\hy(U_\alpha)$ commutes with $\hy(U_\beta)$, see \cite[Proposition~2.3]{Tak83}.
Since $v^\lambda$ is a ``maximal'' vector, we have $X_\beta^{(m)}\acthy w = X_{-\alpha}^{(n)}\acthy(X_\beta^{(m)}\acthy v^\lambda)=0$.
Suppose that $\alpha=\beta$.
By the commutator formula \eqref{eq:comm-formula}, we may assume that $n\geq m$ and get
\[
X_\alpha^{(m)}\acthy w
= X_{-\alpha}^{(n-m)} \acthy \binom{\lambda(H_\alpha) +m-n}{m} v^\lambda.
\]
Since $n$ is supposed to be greater than $\lambda(H_\alpha)=\langle \lambda,\alpha^\vee\rangle$,
we have $X_\alpha^{(m)}\acthy w=0$.

Next, we show the condition (ii).
Since we have assumed that $\alpha\not\in\Psi_\odd$, we especially get $\alpha\neq \gamma$.
Then by Definition~\ref{asu:gen}\eqref{asu:gen(1)},
we get $[Y_{(\gamma,j)},X_{-\alpha}]=0$, and hence $Y_{(\gamma,j)}X_{-\alpha}^{(n)}=X_{-\alpha}^{(n)}Y_{(\gamma,j)}$.
Thus, we get
$Y_{(\gamma,j)}\acthy w = (X_{-\alpha}^{(n)}Y_{(\gamma,j)})\acthy v^\lambda = X_{-\alpha}^{(n)}\acthy(Y_{(\gamma,j)}\acthy v^\lambda) = 0$.
The proof is done.
\end{proof}

If $\Psi_\eve\cap\Psi_\odd\neq\varnothing$,
then we put $K_\alpha:=[X_\alpha,Y_{-\alpha}]$ ($\in \h_\odd$) for each $\alpha\in\Psi_\eve\cap\Psi_\odd$.
For the notation $Y_{-\alpha}$, see the end of Section~\ref{sec:root-sys}.
\begin{deff} \label{def:rest-weight}
Suppose that $\Deltaa$ has a \adm base $(\Psi_\eve,\Psi_\odd)$.
An element $\lambda\in \X(T)^\flat$ is called the \defnote{$p^r$-restricted weight for $\G$} if it satisfies
the following conditions for all $\alpha\in\Psi_\eve$:
\begin{enumerate}
\item\label{def:rest-weight(1)}
For the case when $\alpha\notin\Psi_\odd$.
Then $\langle \lambda, \alpha^\vee \rangle \leq p^r-1$.
\item\label{def:rest-weight(2)}
For the case when $\alpha\in\Psi_\odd$.
If $p\nmid\lambda([K_\alpha,K_\alpha])$, then $\langle \lambda, \alpha^\vee \rangle \leq p^r$.
Otherwise, $\langle \lambda, \alpha^\vee \rangle \leq p^r-1$.
\end{enumerate}
The set of all $p^r$-restricted weights for $\G$ are denoted by $\X_r(T)^\flat$.
\end{deff}

\begin{rem} \label{rem:BK}
Suppose that $\Bbbk$ is algebraically closed.
If $\G=\GL(m|n)$ and $\Q(n)$,
then the above $\X_{r=1}(T)^\flat$ coincides with $X_p^+(T)$ and $\X_p^+(T)_{\mathsf{res}}$ defined in \cite{Kuj03} and \cite{BruKle03}, respectively.
For $\G=\SpO(m|n)$,
the above $\X_r(T)^\flat$ is denoted by $X_r(T)$ in \cite{ShuWan08}.
\qedd
\end{rem}

Let $V$ be a left $\G_r$-supermodule.
Recall that the induced action of $u\in \hy(\G_r)$ on $v\in V$ is denoted by $u\acthy v$, see \eqref{eq:acthy}.
For simplicity, we set $\hy(\G_r)\acthy V:=\{u\acthy v\mid u\in\hy(\G_r), v\in V\}$.
The next is a key-lemma in this paper
whose proof is essentially based on the proof of \cite[Lemma~9.8]{BruKle03}:
\begin{lemm} \label{prp:key-lemma}
Suppose that $\Deltaa$ has a \adm base $(\Psi_\eve,\Psi_\odd)$.
For each $\lambda\in\X_r(T)^\flat$, $\hy(\G_r) \acthy L(\lambda)^\lambda$ forms a left $\G$-supermodule.
In particular, $L(\lambda) = \hy(\G_r) \acthy L(\lambda)^\lambda$.
\end{lemm}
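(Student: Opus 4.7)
Write $V := \hy(\G_r) \acthy L(\lambda)^\lambda$. It suffices to prove $V$ is a left $\G$-sub-supermodule of $L(\lambda)$: it is then non-zero (containing $L(\lambda)^\lambda$) and thus equals $L(\lambda)$ by simplicity. By Theorem~\ref{thm:G=loc-fin-hy(G)-T} this amounts to checking that $V$ is a $\hy(\G)$-$T$-sub-supermodule. Since $L(\lambda)^\lambda$ is already a $T$-supermodule and the adjoint action of $T$ stabilises $\hy(\G_r)$ (normality of $\G_r$), $V$ decomposes into $T$-weight subspaces $V\cap L(\lambda)^{\lambda+\mu}$ and is automatically a $\hy(T)$-$T$-supermodule.

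By the remark after Theorem~\ref{prp:PBW-Gr}, $\hy(\G)$ is generated as a superalgebra by $\hy(\Gev)$ and $\hy(\G_r)$, so we need only verify $\hy(\Gev)$-stability of $V$. In turn $\hy(\Gev)$ is generated by $\hy(T)$ together with the $\hy(U_{\pm\alpha}) = \bigoplus_{n\geq 0} \Bbbk X_{\pm\alpha}^{(n)}$ for $\alpha\in\Psi_\eve$. Since $X_{\pm\alpha}^{(n)} \in \hy(\G_r)$ whenever $n < p^r$, the only non-trivial task is to show that $X_{\pm\alpha}^{(n)} \acthy V \subseteq V$ for all $\alpha\in\Psi_\eve$ and all $n \geq p^r$.

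Fix $u\in\hy(\G_r)$ and $v^\lambda \in L(\lambda)^\lambda$. Combining the normality of $\hy(\G_r)$ inside $\hy(\G)$ (Proposition~\ref{prp:hy(normal)}) with the Birkhoff--Witt comultiplication \eqref{eq:BW-type} gives
\[
X_{\pm\alpha}^{(n)}\,u \;=\; \sum_{i+j=n} (X_{\pm\alpha}^{(i)}\triact u)\, X_{\pm\alpha}^{(j)}, \qquad X_{\pm\alpha}^{(i)}\triact u \in \hy(\G_r),
\]
and consequently
\[
X_{\pm\alpha}^{(n)} \acthy (u\acthy v^\lambda) \;=\; \sum_{i+j=n} (X_{\pm\alpha}^{(i)}\triact u) \acthy (X_{\pm\alpha}^{(j)} \acthy v^\lambda).
\]
For the $+$ case, $X_\alpha^{(j)} \acthy v^\lambda = 0$ for $j \geq 1$ since $v^\lambda$ is maximal, so only the $j=0$ summand survives and already lies in $V$. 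For the $-$ case, summands with $j < p^r$ lie in $V$ for free; the problem therefore reduces to showing $X_{-\alpha}^{(j)} \acthy v^\lambda = 0$ for all $j \geq p^r$ (with the threshold shifted to $j\geq p^r+1$ in the delicate subcase).

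When $\alpha \in \Psi_\eve \setminus \Psi_\odd$, Lemma~\ref{lem:even-act} together with Definition~\ref{def:rest-weight}\eqref{def:rest-weight(1)} yields the vanishing immediately. The remaining case $\alpha \in \Psi_\eve \cap \Psi_\odd$ is where I expect the real work. Here Lemma~\ref{lem:even-act} does not apply because the odd root vector $Y_{-\alpha}$ no longer commutes with $X_{-\alpha}$: one has $K_\alpha = [X_\alpha, Y_{-\alpha}] \in \h_\odd$, and the element $[K_\alpha,K_\alpha]\in\h_\eve$ acts on $L(\lambda)^\lambda\cong\uu(\lambda)$ by the scalar $\lambda([K_\alpha,K_\alpha])$. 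The plan is to mimic the proof of Lemma~\ref{lem:even-act}: show that $w_j := X_{-\alpha}^{(j)}\acthy v^\lambda$ is a maximal vector (hence zero in the simple module $L(\lambda)$) for $j$ past the appropriate threshold. The new check is that $Y_\alpha \acthy w_j = 0$; by Definition~\ref{asu:gen}\eqref{asu:gen(1)} the only obstruction comes from $\gamma = \alpha$. This is where the rank-one ``queer'' identities, used in \cite[Lemma~9.8]{BruKle03} for $\Q(n)$, enter: commuting $Y_\alpha$ through $X_{-\alpha}^{(j)}$ produces an expression involving $X_{-\alpha}^{(j-1)} K_\alpha$ and $X_{-\alpha}^{(j-2)}(\lambda(H_\alpha)-2(j-1))$-type factors, and invoking the refined bound in Definition~\ref{def:rest-weight}\eqref{def:rest-weight(2)} involving the parity of $\lambda([K_\alpha,K_\alpha])$ forces the result to vanish. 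Granted this case, the argument concludes.
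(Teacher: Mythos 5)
Your skeleton coincides with the paper's: reduce to $\hy(\Gev)$-stability of $V=\hy(\G_r)\acthy L(\lambda)^\lambda$ using that $\hy(\G)$ is generated by $\hy(\Gev)$ and $\hy(\G_r)$, push divided powers past elements of $\hy(\G_r)$ via normality (Proposition~\ref{prp:hy(normal)}) and the Birkhoff--Witt comultiplication \eqref{eq:BW-type}, kill $X_{\alpha}^{(n)}$ by maximality of $v^\lambda$, and kill $X_{-\alpha}^{(n)}$ for $\alpha\in\Psi_\eve\setminus\Psi_\odd$ by Lemma~\ref{lem:even-act} and Definition~\ref{def:rest-weight}\eqref{def:rest-weight(1)}. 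Up to that point you are reproducing the paper's argument.

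The gap is the case $\alpha\in\Psi_\eve\cap\Psi_\odd$, which is the heart of the lemma, and your plan there rests on a false statement. You propose to show that $X_{-\alpha}^{(j)}\acthy v^\lambda$ \emph{vanishes} (by being a maximal vector) for $j$ past a threshold. But when $p\nmid\lambda([K_\alpha,K_\alpha])$, Definition~\ref{def:rest-weight}\eqref{def:rest-weight(2)} allows $\langle\lambda,\alpha^\vee\rangle=p^r$, and then $X_{-\alpha}^{(p^r)}\acthy v^\lambda$ is a vector of weight $s_\alpha\lambda$ that need not vanish and is not maximal --- this is precisely why the restricted bound for such $\alpha$ is $p^r$ rather than $p^r-1$. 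Your own reduction already exposes the hole: shifting the vanishing threshold to $j\geq p^r+1$ ``in the delicate subcase'' leaves the summand with $j=p^r$ neither ``free'' (since $X_{-\alpha}^{(p^r)}\notin\hy(\G_r)$ by Theorem~\ref{prp:PBW-Gr}) nor covered by vanishing; and the instance $n=p^r$, $u=1$ of the very claim to be proved is exactly $X_{-\alpha}^{(p^r)}\acthy v^\lambda\in\hy(\G_r)\acthy L(\lambda)^\lambda$. What the paper proves instead is the identity $X_{-\alpha}^{(p^r+m)}\acthy v^\lambda=X_{-\alpha}^{(p^r+m-1)}Y_{-\alpha}K\acthy v^\lambda$ with $K=2cK_\alpha$ and $c=\lambda([K_\alpha,K_\alpha])^{-1}$ (or $c=1$): the maximal-vector computation with $X_\alpha^{(p^r+m)}$ and $Y_\alpha X_\alpha^{(p^r+m-1)}$, together with Definition~\ref{def:rest-weight}\eqref{def:rest-weight(2)}, is applied to the \emph{difference} $\bigl(X_{-\alpha}^{(p^r+m)}-X_{-\alpha}^{(p^r+m-1)}Y_{-\alpha}K\bigr)\acthy v^\lambda$, forcing it (not $X_{-\alpha}^{(j)}\acthy v^\lambda$ itself) to be zero. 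For $m=0$ the right-hand side lies in $\hy(\G_r)\acthy L(\lambda)^\lambda$ by the PBW theorem for $\G_r$, and the cases $m\geq1$ follow by a further induction using the commutator trick. So the conclusion in this case is membership in $\hy(\G_r)\acthy L(\lambda)^\lambda$, not vanishing; as written, your argument cannot close.
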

\begin{proof}
First of all, we note that the second claim follows from the first one and the simplicity of $L(\lambda)$.
By Theorem~\ref{thm:G=loc-fin-hy(G)-T}, it is enough to show that $M:= \hy(\G_r)\acthy L(\lambda)^\lambda$ is $\hy(\G)$-invariant.
Since $\hy(\G)$ is generated by $\hy(\Gev)$ and $\hy(\G_r)$, we shall see that $M$ is $\hy(\Gev)$-invariant.
For $x\in \hy(\G)$ and $u\in \hy(\G_r)$, we get
\begin{eqnarray*}
xu
&=& \sum_{x,u}(-1)^{|u_{(1)}||x_{(2)}|+|u_{(1)}||x_{(3)}|} x_{(1)} u_{(1)} \s(x_{(2)}) \s(u_{(2)}) u_{(3)} x_{(3)}\\
&=& \sum_{x,u}(-1)^{|u_{(1)}||x_{(2)}|} [x_{(1)},u_{(1)}] u_{(2)} x_{(2)},
\end{eqnarray*}
where $[\;,\;]$ denotes the super-bracket \eqref{eq:super-bracket}.
Since $\G_r$ is a normal super-subgroup of $\G$,
we have $[x_{(1)},u_{(1)}]\in\hy(\G_r)$ by Proposition~\ref{prp:hy(normal)}.
Thus by \eqref{eq:BW-type}, we see that
$M$ is $\hy(\Gev)$-stable if and only if $x\acthy L(\lambda)^\lambda \subset M$ for all $x\in\hy(\Uev)$,
since $\Uevp$ trivially acts on $L(\lambda)^\lambda$.
Moreover, by Theorem~\ref{prp:PBW-Gr}, it is enough to show that
\begin{equation}\label{eq:enough}
X_{-\alpha}^{(n)}\acthy v^\lambda \in M
\quad
\text{for all}\;\;
\alpha\in\Psi_\eve,\; n\geq p^r \text{ and } v^\lambda\in L(\lambda)^\lambda.
\end{equation}
If $\alpha\in \Psi_\eve\setminus\Psi_\odd$,
then $X_{-\alpha}^{(n)}\acthy v^\lambda= 0$ by Lemma~\ref{lem:even-act} and Definition~\ref{def:rest-weight}\eqref{def:rest-weight(1)}.

Thus, in the following, we suppose that $\alpha\in\Psi_\eve\cap\Psi_\odd$.
For simplicity, we write $n=p^r+m$ for some $m\in\mathbb{N}\cup\{0\}$.
If $p\nmid \lambda([K_\alpha,K_\alpha])$, then we put $c:=\lambda([K_\alpha,K_\alpha])^{-1}\in\Bbbk$.
Otherwise, we put $c:=1\in\Bbbk$.
Set $K:=2c K_\alpha$.
We note that, $K_\alpha K = c[K_\alpha,K_\alpha]\in\h_\eve$.
First, we show that
\begin{equation}\label{eq:w}
X_{-\alpha}^{(p^r+m)}\acthy v^\lambda = X_{-\alpha}^{(p^r+m-1)}Y_{-\alpha}K\acthy v^\lambda.
\end{equation}
To show this, we suppose that $w:= (X_{-\alpha}^{(p^r+m)} - X_{-\alpha}^{(p^r+m-1)}Y_{-\alpha}K)\acthy v^\lambda$ is non-zero.
If $x\acthy w = 0$ for all $x\in\hy(\U^+)$, then $\hy(\G)\acthy w$ forms a proper super-submodule of $L(\lambda)$, a contradiction.
Thus, there exists a PBW monomial $x\in\hy(\U^+)$ such that $x\acthy w\neq 0$ and $x\acthy w\in L(\lambda)^\lambda$.
By comparing weights and by Definition~\ref{asu:gen},
such $x$ must be of the form (i) $x=X_{\alpha}^{(p^r+m)}$ or (ii) $x=Y_\alpha X_{\alpha}^{(p^r+m-1)}$.
For the case (i), by the commutator formula \eqref{eq:comm-formula}, we have
\begin{eqnarray*}
x\acthy w
&=& (\binom{\lambda(H_\alpha)}{p^r+m}1 - \binom{\lambda(H_\alpha)-1}{p^r+m-1}X_\alpha Y_{-\alpha}K )\acthy v^\lambda \\
&=& (\binom{\lambda(H_\alpha)}{p^r+m} - \binom{\lambda(H_\alpha)-1}{p^r+m-1}c\lambda([K_\alpha,K_\alpha])) v^\lambda.
\end{eqnarray*}
By Definition~\ref{def:rest-weight}\eqref{def:rest-weight(2)},
we get $x\acthy w=0$ for all $m$.
Also, for the case (ii), we have
\begin{eqnarray*}
x\acthy w
&=& Y_{\alpha}(X_{-\alpha}\binom{\lambda(H_\alpha)-1}{p^r+m-1}
- \binom{\lambda(H_\alpha)-\alpha(H_\alpha)}{p^r+m-1}Y_{-\alpha}K \\
&&\quad - X_{-\alpha}\binom{\lambda(H_\alpha)-2}{p^r+m-2}X_\alpha Y_{-\alpha}K )\acthy v^\lambda \\
&=& (\binom{\lambda(H_\alpha)-1}{p^r+m-1} - \binom{\lambda(H_\alpha)-2}{p^r+m-2}c\lambda([K_\alpha,K_\alpha]))
[Y_\alpha,X_{-\alpha}]\acthy v^\lambda.
\end{eqnarray*}
The second equation follows from $\alpha(H_\alpha)=\langle \alpha,\alpha^\vee\rangle=2$.
Thus, by the same reason as (i), we get $x\acthy w=0$ for all $m$.
This is a contradiction, and hence $w=0$.
This proves the equation \eqref{eq:w}.

Finally, we show \eqref{eq:enough} by induction on $m$.
If $m=0$,
then \eqref{eq:w} implies that $X_{-\alpha}^{(p^r)}\acthy v^\lambda = X_{-\alpha}^{(p^r-1)}Y_{-\alpha}K\acthy v^\lambda$.
The right hand side actually belongs to $M$ (see Theorem~\ref{prp:PBW-Gr}).
Suppose that $m\geq1$.
Then by \eqref{eq:w} and the argument at the beginning of the proof, we get
\[
X_{-\alpha}^{(p^r+m)}\acthy v^\lambda = \sum_u \sum_{i+j=p^r+m-1} [u_{(1)},X_{-\alpha}^{(i)}] u_{(2)} X_{-\alpha}^{(j)} \acthy v^\lambda
\]
where $u:=Y_{-\alpha}K \in\hy(\G_r)$.
Here, we have used \eqref{eq:BW-type}.
Thus, by the induction hypothesis, we get $X_{-\alpha}^{(p^r+m)}\acthy v^\lambda\in M$.
This completes the proof.
\end{proof}

\subsection{Steinberg's tensor product theorem} \label{sec:simple-G-G_r-supermods}
Throughout the rest of the paper,
we assume that the root system $\Deltaa$ of $\G$ (with respect to $T$) has a \adm base $(\Psi_\eve,\Psi_\odd)$, see Definition~\ref{asu:gen}.

As we have seen in Section~\ref{sec:simple-G-supermods},
{\it not} all simple supermodules are absolutely simple, in general
(see Proposition~\ref{prp:T=Tev-abs-simple} and Example~\ref{ex:non-abs-irr}).
Thus, we also assume the following condition on our base field $\Bbbk$.
\begin{asum} \label{asu:alg-cld}
If $\mathbf{0}\in\Deltaa$ (or equivalently, $\T\neq T$),
then we assume that the base field $\Bbbk$ is algebraically closed.
\end{asum}

We naturally regard a left $\G$-supermodule as a left $\G_r$-supermodule via the inclusion $\G_r\subset \G$ as before.
The following proof is due to Brundan and Kleshchev \cite[Lemma~9.6]{BruKle03}:
\begin{lemm} \label{prp:alg-cld}
Any simple left $\G$-supermodule is semisimple as a left $\G_r$-supermodule.
\end{lemm}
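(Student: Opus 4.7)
The plan is to prove that the $\G_r$-socle $S := \soc_{\G_r}(L)$ of any simple left $\G$-supermodule $L$ equals $L$ itself; since $S$ is $\G_r$-semisimple by definition, this will give the claim. Two things must be verified: that $S$ is non-zero, and that $S$ is a $\G$-sub-supermodule of $L$. Once both are established, the $\G$-simplicity of $L$ forces $S = L$.

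For non-vanishing of $S$, I would use that the $\G$-supermodule $L$ is locally finite by Theorem~\ref{thm:G=loc-fin-hy(G)-T}, together with the fact that $\hy(\G_r)$ is finite-dimensional (because $\G_r$ is finite). For any non-zero homogeneous $v \in L$, the subspace $\hy(\G_r) \acthy v$ is then a non-zero finite-dimensional left $\G_r$-sub-supermodule of $L$, which must contain a simple $\G_r$-sub-supermodule by the usual descending chain argument. Hence $S \neq 0$.

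The main step is the $\G$-stability of $S$, and it rests on the normality of $\G_r$ in $\G$. Functorially, for any commutative superalgebra $R$ and any $g \in \G(R)$, the adjoint $\Ad(g)$ restricts to a supergroup automorphism of $(\G_r)_R$; consequently, for any simple left $\G_r$-sub-supermodule $M$ of $L$, the translate $g \cdot M \subset L \otimes R$ is again a simple $(\G_r)_R$-sub-supermodule, with the $\G_r$-action twisted by $\Ad(g^{-1})$. Equivalently, at the Hopf-algebraic level, Proposition~\ref{prp:hy(normal)} tells us that $\hy(\G_r)$ is normal in $\hy(\G)$; hence for each $x \in \hy(\G)$ and each $u \in \hy(\G_r)$ one can rewrite $ux$ in the form $\sum_i x_i u_i$ with $x_i \in \hy(\G)$ and $u_i \in \hy(\G_r)$, so that $\hy(\G_r) \acthy (x \acthy M) \subseteq x \acthy M$. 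Thus $x \acthy M$ is again a $\G_r$-sub-supermodule, and it remains simple because the ambient twist is by an automorphism of $\hy(\G_r)$. Summing over all simple $M \subseteq L$ shows that $S$ is $\G$-stable.

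I expect the only delicate point to be this last verification in the super-setting, where one has to track parities carefully when invoking the normality of $\hy(\G_r)$ in $\hy(\G)$ and checking that $\G$-translates of simple $\G_r$-sub-supermodules are themselves simple as $\G_r$-super\-modules. Thereafter, since $S$ is a non-zero $\G$-sub-supermodule of the simple $\G$-supermodule $L$, we must have $S = L$, which is the desired semisimplicity of $L$ as a $\G_r$-supermodule.
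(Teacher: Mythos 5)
There is a genuine gap in the step where you claim that the $\G_r$-socle is $\G$-stable. The Hopf-algebraic version of your argument does not work as stated: normality of $\hy(\G_r)$ in $\hy(\G)$ gives, for $u\in\hy(\G_r)$ and $x\in\hy(\G)$, an identity of the form $ux=\sum x_{(1)}\bigl(\s(x_{(2)})\,u\,x_{(3)}\bigr)$ with the parenthesized factors in $\hy(\G_r)$; the $\hy(\G)$-factors appearing on the left are the \emph{components of the iterated comultiplication of $x$}, not $x$ itself. Hence one only obtains $\hy(\G_r)\acthy(x\acthy M)\subseteq\sum_x x_{(1)}\acthy M$; for instance, for a primitive $x$ one gets $u\acthy(x\acthy m)=\pm x\acthy(u\acthy m)\mp[x,u]\acthy m\in x\acthy M+M$, which is not contained in $x\acthy M$. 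Moreover, even where $x\acthy M$ happens to be $\hy(\G_r)$-stable, the assertion that it "remains simple because the twist is by an automorphism" is unjustified: $x\acthy(-)$ for a general element of $\hy(\G)$ is merely a linear map (possibly with kernel), and the conjugation-twist reasoning is only valid for group-like elements, i.e.\ for points $g\in\G(R)$. The functorial version of your argument does produce $(\G_r)_R$-sub-supermodules $g\cdot(M\otimes R)$ of $L\otimes R$, but to deduce that the subspace $\soc_{\G_r}(L)\subseteq L$ is a $\G$-sub-supermodule you must pass from arbitrary $R$-points back to $L$; this requires either density of the $\Bbbk$-points (hence $\Bbbk$ algebraically closed and $\Gev$ reduced) or compatibility of the $\G_r$-socle with base field extension, and your proof invokes neither.

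This omission is not cosmetic: the lemma is \emph{false} without Assumption~\ref{asu:alg-cld}. Remark~\ref{rem:need} exhibits $\G=\Q(2)$, $p=3$, $\lambda=\lambda_1-2\lambda_2$ over a non-algebraically-closed field with $-1\notin(\Bbbk^\times)^2$, for which $L(\lambda)\cong M_r(\lambda)$ is \emph{not} semisimple over $\G_r$. Any proof that never uses the algebraic closedness of $\Bbbk$ (when $\mathbf{0}\in\Deltaa$), or the absolute-simplicity statements of Propositions~\ref{prp:T=Tev-abs-simple} and \ref{prp:T=Tev-abs-simple_r} (when $\mathbf{0}\notin\Deltaa$), therefore cannot be correct. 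The paper's route is: first reduce to $\Bbbk$ algebraically closed via those propositions (or invoke Assumption~\ref{asu:alg-cld}); then fix one simple $\G_r$-sub-supermodule $S$, form $M:=\sum_{g\in\Gev(\Bbbk)}g.S$, use normality of $\G_r$ to see each $g.S$ is again a simple sub-supermodule, use density of rational points of the reduced group $\Gev$ to upgrade $\Gev(\Bbbk)$-stability to $\hy(\Gev)$-stability, and finally use that $\hy(\G)$ is generated by $\hy(\Gev)$ and $\hy(\G_r)$ (Theorem~\ref{prp:PBW-Gr}) to conclude $M=L$. Your opening step (nonvanishing of the socle) is fine, but the heart of the proof is missing.
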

\begin{proof}
If $\mathbf{0}\notin\Deltaa$,
then to prove the claim, we may assume that $\Bbbk$ is algebraically closed by Propositions~\ref{prp:T=Tev-abs-simple} and \ref{prp:T=Tev-abs-simple_r}.
Otherwise, by Assumption~\ref{asu:alg-cld}, $\Bbbk$ is supposed to be algebraically closed.

Let $L$ be a simple left $\G$-supermodule.
Since $L\neq0$, we get $\soc_{\G_r}(L)\neq0$ (see \cite[Lemma~A.3]{Shi20}).
Thus, we fix a simple left $\G_r$-super-submodule $S$ of $L$.
Note that, the $\Bbbk$-valued points $\G_r(\Bbbk)$ coincides with $(\Gev)_r(\Bbbk)$ by Proposition~\ref{prp:Gevr=Grev}.
For each $g\in \Gev(\Bbbk)$, $g.S:=\{g.v\in L \mid v\in S\}$ becomes
a simple left $(\Gev)_r$-submodule of $L$, since $(\Gev)_r$ is a normal subgroup of $\Gev$.
Thus, $M:=\sum_{g\in\Gev(\Bbbk)}g.S$ forms a semisimple left $(\Gev)_r$-submodule of $L$.
In particular, $M$ is a left $\hy((\Gev)_r)$-submodule of $L$ by Theorem~\ref{thm:G=loc-fin-hy(G)-T}.

On the other hand, it is obvious that $M$ is a left $\Gev(\Bbbk)$-submodule of $L$.
Since $\Bbbk$ is algebraically closed and $\Gev$ is reduced,
$M$ is actually a left $\Gev$-submodule of $L$, see \cite[Part~I, Section~2.8]{Jan03}.
Thus, again by Theorem~\ref{thm:G=loc-fin-hy(G)-T}, $M$ is also a locally finite left $\hy(\Gev)$-$T$-submodule of $L$.
By Theorem~\ref{prp:PBW-Gr}, as a superalgebra, $\hy(\G)$ is generated by $\hy(\Gev)$ and $\hy(\G_r)$.
This shows that $M$ is actually a locally finite left $\hy(\G)$-$T$-supermodule,
and hence a left $\G$-supermodule again by Theorem~\ref{thm:G=loc-fin-hy(G)-T}.
Since $L$ is simple, we get $L=M$.
The proof is done.
\end{proof}

\begin{prop} \label{prp:key-prp}
For $\lambda\in\X_r(T)^\flat$,
we have $\hy(\G_r) \acthy L(\lambda)^\lambda \cong L_r(\lambda)$ as left $\G_r$-supermodules.
In particular, $L(\lambda) \cong L_r(\lambda)$ as left $\G_r$-supermodules.
\end{prop}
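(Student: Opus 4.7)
The plan is to realise $L(\lambda)$ as a $\G_r$-quotient of $M_r(\lambda)=\coind_{\B_r^+}^{\G_r}(\uu(\lambda))$ and then invoke Proposition~\ref{prp:L_r=M_r/Rad} together with the $\G_r$-semisimplicity supplied by Lemma~\ref{prp:alg-cld}. The whole argument is really the marriage of these two inputs with Lemma~\ref{prp:key-lemma}.

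First, by Theorem~\ref{prp:simple-G} the weight space $L(\lambda)^\lambda$ is isomorphic to $\uu(\lambda)$ as a left $\T$-supermodule, and $\U^+$ acts trivially on it. Hence the inclusion $\uu(\lambda)\hookrightarrow L(\lambda)^\lambda\subset L(\lambda)$ is a $\B^+$-supermodule homomorphism, and in particular an $\hy(\B_r^+)$-supermodule homomorphism. The adjunction between $\coind_{\B_r^+}^{\G_r}(-)=\hy(\G_r)\otimes_{\hy(\B_r^+)}(-)$ and restriction along $\hy(\B_r^+)\subset \hy(\G_r)$ then produces a $\G_r$-supermodule homomorphism
$$
\Theta:M_r(\lambda)\longrightarrow L(\lambda),\qquad u\otimes v\longmapsto u\acthy v,
$$
whose image is exactly $\hy(\G_r)\acthy L(\lambda)^\lambda$.

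Second, since $\lambda\in\X_r(T)^\flat$, Lemma~\ref{prp:key-lemma} gives $\hy(\G_r)\acthy L(\lambda)^\lambda=L(\lambda)$, so $\Theta$ is surjective. Under Assumption~\ref{asu:alg-cld}, Lemma~\ref{prp:alg-cld} shows that $L(\lambda)$ is $\G_r$-semisimple, so $\ker(\Theta)\supset \rad_{\G_r}(M_r(\lambda))$ and $\Theta$ factors as a surjection
$$
\overline{\Theta}:\topp_{\G_r}(M_r(\lambda))\twoheadrightarrow L(\lambda).
$$
By Proposition~\ref{prp:L_r=M_r/Rad}, the left-hand side is isomorphic to $L_r(\lambda)$, which is simple by Proposition~\ref{prp:L_r-simple}. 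Since $L(\lambda)\neq 0$, $\overline{\Theta}$ must be an isomorphism; this is the ``in particular'' part. Combining with Lemma~\ref{prp:key-lemma} gives $\hy(\G_r)\acthy L(\lambda)^\lambda = L(\lambda)\cong L_r(\lambda)$, proving the first assertion as well.

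The conceptual content has already been packaged into the preceding results, so I do not expect any new difficulty beyond two bookkeeping checks. The first is verifying that the $\B_r^+$-structure on $L(\lambda)^\lambda$ genuinely agrees with that on $\uu(\lambda)$, which is immediate from the triviality of the $\U^+$-action. The second, and the only point where care is needed, is tracking that the identification in Proposition~\ref{prp:L_r=M_r/Rad} is compatible with the one used here so that the isomorphism $\overline{\Theta}$ produces $L_r(\lambda)$ without an unwanted parity shift; this is ensured by employing the same chosen representative $\uu(\lambda)$ in the constructions of both $L(\lambda)$ and $L_r(\lambda)$.
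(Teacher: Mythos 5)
Your proposal is correct and follows essentially the same route as the paper: both construct the map $M_r(\lambda)\to L(\lambda)$ via the coinduction adjunction, use Lemma~\ref{prp:key-lemma} for surjectivity, Lemma~\ref{prp:alg-cld} for semisimplicity of the image so that the map factors through $\topp_{\G_r}(M_r(\lambda))$, and then conclude via Proposition~\ref{prp:L_r=M_r/Rad} and simplicity of $L_r(\lambda)$.
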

\begin{proof}
Since $L(\lambda)^\lambda \cong \uu(\lambda)$ as left $\B_r^+$-supermodules,
we get
\[
0\neq {}_{\B_r^+}\Hom\big( \uu(\lambda), L(\lambda) \big) \cong {}_{\G_r}\Hom\big( M_r(\lambda), L(\lambda)\big).
\]
Thus, the following is a non-zero surjective homomorphism of $\G_r$-supermodules:
\[
\varphi : M_r(\lambda) \longrightarrow \hy(\G_r) \acthy L(\lambda)^\lambda;
\quad u\otimes_{\hy(\B_r^+)} v \longmapsto u\acthy v,
\]
where $u\in\hy(\G_r)$, $v\in \uu(\lambda)$.
Since the quotient $M_r(\lambda)/\Ker(\varphi)$ is semisimple $\G_r$-supermodule by Lemmas~\ref{prp:key-lemma} and \ref{prp:alg-cld},
the radical of $M_r(\lambda)$ is contained in the kernel of $\varphi$.
This shows that there exists a surjective homomorphism
\[
\topp_{\G_r}(M_r(\lambda)) \longrightarrow M_r(\lambda)/\Ker(\varphi) \cong \hy(\G_r)\acthy L(\lambda)^\lambda.
\]
of left $\G_r$-supermodules.
This map is actually bijective,
since $L_r(\lambda)\cong \topp_{\G_r}(M_r(\lambda))$ by Proposition~\ref{prp:L_r=M_r/Rad}.
Also, by Lemma~\ref{prp:key-lemma}, we get $\hy(\G_r) \acthy L(\lambda)^\lambda = L(\lambda)$.
The proof is done.
\end{proof}

\begin{rem} \label{rem:need}
If $\mathbf{0}\in\Deltaa$ and $\Bbbk$ is not algebraically closed,
which is the same as in Example~\ref{ex:non-abs-irr},
then neither Lemma~\ref{prp:alg-cld} nor Proposition~\ref{prp:key-prp} fail in general.
To see this, we shall consider the queer supergroup $\G=\Q(2)$.
Suppose that $p=\mathsf{char}(\Bbbk)=3$.
Set $\alpha:=\lambda_1-\lambda_2\in\Deltaa$ and $\lambda:=\lambda_1-2\lambda_2\in \X(T)^\flat$.
Since $\langle\lambda,\alpha^\vee\rangle = 3$, we have $\lambda\in \X_{r=2}(T)^\flat$.
As in Example~\ref{ex:non-abs-irr},
we may identified $L(\lambda)^\lambda=\mathfrak{u}(\lambda)$ with
the 4-dimensional super-subalgebra $\hy(\T)^\lambda$ of $\hy(\T)$ generated by $K_1$ and $K_2$.
By Lemma~\ref{lem:even-act},
one sees that $\hy(\G_r) X_{-\alpha}^{(4)}(K_1-K_2-4)\otimes_{\hy(\B_r^+)} L(\lambda)^\lambda$ is a non-zero proper super-submodule of $M_r(\lambda)$,
and hence $M_r(\lambda)$ is not simple.
On the other hand, by the PBW theorem for $\G_r$ (Theorem~\ref{prp:PBW-Gr}) and Lemma~\ref{prp:key-lemma}, one sees that
\[
\varphi : M_r(\lambda) \longrightarrow \hy(\G_r) \acthy L(\lambda)^\lambda=L(\lambda);
\quad u\otimes_{\hy(\B_r^+)} v \longmapsto u\acthy v,
\]
is injective, and hence $\varphi$ is bijective.
If we suppose that $L(\lambda)$ is semisimple as a left $\G_r$-supermodule (cf.~Lemma~\ref{prp:alg-cld})
or $L(\lambda)\cong L_r(\lambda)$ as left $\G_r$-supermodules (cf.~Proposition~\ref{prp:key-prp}),
then this shows that $M_r(\lambda)\cong L_r(\lambda)$.
In particular, $M_r(\lambda)$ is a simple left $\G_r$-supermodule, a contradiction.
Thus, our Assumption~\ref{asu:alg-cld} is actually needed.
\qedd
\end{rem}

For $\lambda\in\X(T)$, set $H_\ev^0(\lambda) := \ind_{\Bev}^{\Gev}(\Bbbk^\lambda)$.
Then it is known that
\[
\X(T)_+
:= \{ \lambda\in \X(T) \mid H_\ev^0(\lambda) \neq 0 \}
= \{ \lambda\in \X(T) \mid \forall \alpha\in\Deltaa_\eve^+,\,\, \langle\lambda,\alpha^\vee\rangle\geq 0 \}
\]
and the following map is bijective, see \cite[Part~II, Chapter~2]{Jan03}:
\begin{equation} \label{eq:L_ev}
\X(T)_+ \longrightarrow \Simple(\Gev);
\quad
\lambda \longmapsto L_\ev(\lambda):=\soc_{\Gev}(H_\ev^0(\lambda)).
\end{equation}
In \cite[Proposition~4.18]{Shi20}, it is shown that $\X(T)^\flat \subset \X(T)_+$.
Thus, for each $\lambda\in \X(T)^\flat$, we may consider $L_\ev(\lambda)$.

\begin{thm}\label{thm:main}
Let $\lambda\in\X(T)^\flat$.
Suppose that there exists $\lambda'\in\X_r(T)^\flat$ and $\mu\in\X(T)^\flat$ such that $\lambda=\lambda'+p^r\mu$.
Then there exists an isomorphism $L(\lambda)\cong L(\lambda')\otimes L_\ev(\mu)^{[r]}$ of left $\G$-supermodules.
\end{thm}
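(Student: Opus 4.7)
The plan is to show that $V:=L(\lambda')\otimes L_\ev(\mu)^{[r]}$, equipped with the diagonal $\G$-action, is a simple left $\G$-supermodule of highest weight $\lambda$; Theorem~\ref{prp:simple-G} will then identify it with $L(\lambda)$. First, I would restrict to $\G_r$. Because $\Fr^r$ vanishes on $\G_r$, the Frobenius-twisted factor $L_\ev(\mu)^{[r]}$ carries a trivial $\G_r$-action, so $V|_{\G_r}\cong L(\lambda')|_{\G_r}\otimes L_\ev(\mu)$ with the second tensorand trivial. By Proposition~\ref{prp:key-prp}, $L(\lambda')|_{\G_r}\cong L_r(\lambda')$, and under Assumption~\ref{asu:alg-cld} together with Proposition~\ref{prp:T=Tev-abs-simple_r} this simple module is absolutely simple, so ${}_{\G_r}\underline{\End}(L_r(\lambda'))=\Bbbk$. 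Consequently $V$ is semisimple and $L_r(\lambda')$-isotypic as a $\G_r$-supermodule.

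Next, I would deduce that $V$ is simple as a $\G$-supermodule. The evaluation map of Example~\ref{ex:[r]} gives a $\G$-isomorphism ${}_{\G_r}\underline{\Hom}(L(\lambda'),V)\otimes L(\lambda')\stackrel{\sim}{\to}V$, and the triviality of the $\G_r$-action on $L_\ev(\mu)^{[r]}$ together with ${}_{\G_r}\underline{\End}(L(\lambda'))=\Bbbk$ identifies ${}_{\G_r}\underline{\Hom}(L(\lambda'),V)\cong L_\ev(\mu)^{[r]}$ as $\G$-supermodules. Any nonzero $\G$-sub-supermodule $W\subseteq V$ therefore has the shape $L(\lambda')\otimes E'$ for some $\G$-sub-supermodule $E'\subseteq L_\ev(\mu)^{[r]}$. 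But $L_\ev(\mu)^{[r]}$ is the Frobenius twist of the simple $\Gev$-module $L_\ev(\mu)$, so it is simple already as a $\Gev$-module and a fortiori as a $\G$-supermodule (any $\G$-sub-supermodule is $\Gev$-stable). Hence $E'=L_\ev(\mu)^{[r]}$ and $W=V$.

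Finally, I would verify that $V$ has highest weight $\lambda$ with highest weight space isomorphic to $\uu(\lambda)$. Since the $T$-weights of $L(\lambda')$ are bounded above by $\lambda'$ and those of $L_\ev(\mu)^{[r]}$ by $p^r\mu$ in the order determined by $\Upsilon$, the equality $\lambda=\lambda'+p^r\mu$ forces $V^\lambda=L(\lambda')^{\lambda'}\otimes L_\ev(\mu)^\mu\cong\uu(\lambda')\otimes\Bbbk^{p^r\mu}$. The Clifford form attached to $\lambda$ is $b^\lambda(x,y)=d\lambda([x,y])$, and since $d(p^r\mu)=p^r\,d\mu=0$ on $\Lie(T)$ in characteristic $p$, we have $b^\lambda=b^{\lambda'}$, so the Clifford superalgebras agree and the twist by $\Bbbk^{p^r\mu}$ yields $\uu(\lambda')\otimes\Bbbk^{p^r\mu}\cong\uu(\lambda)$ as $\T$-supermodules. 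The $\U^+$-action annihilates $V^\lambda$ because it annihilates $L(\lambda')^{\lambda'}$ (Theorem~\ref{prp:simple-G}), while on $L_\ev(\mu)^{[r]}$ the low divided powers $X_\alpha^{(n)}$ ($n<p^r$) act trivially through $\hy(\G_r)$ and the remaining divided powers act through $\hy(\Uevp)$, which annihilates $v_\mu$. An appeal to Theorem~\ref{prp:simple-G} then delivers $V\cong L(\lambda)$.

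The main technical obstacle lies in the second paragraph: both the absolute simplicity of $L_r(\lambda')$ (which fails without Assumption~\ref{asu:alg-cld} when $\mathbf{0}\in\Deltaa$, as Example~\ref{ex:non-abs-irr} shows) and the functorial identification of ${}_{\G_r}\underline{\Hom}(L(\lambda'),V)$ with $L_\ev(\mu)^{[r]}$ as a $\G$-supermodule (which relies on the normality of $\G_r$ in $\G$ via the conjugate action of Example~\ref{ex:[r]}) are indispensable for the reduction of a $\G$-sub-supermodule of $V$ to a $\G$-sub-supermodule of $L_\ev(\mu)^{[r]}$; Remark~\ref{rem:need} shows that precisely this step is what collapses in the absence of these hypotheses.
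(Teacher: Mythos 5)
Your argument is correct in substance, but it runs in the opposite direction from the paper's. The paper starts from $L(\lambda)$: it observes that $H:={}_{\G_r}\Hom(L(\lambda'),L(\lambda))\neq0$ (via Proposition~\ref{prp:key-prp} and Proposition~\ref{prp:L_r(lambda+p^rmu)}), that the evaluation map $H\otimes L(\lambda')\to L(\lambda)$ of Example~\ref{ex:[r]} is a surjective $\G$-morphism, and then uses Lemma~\ref{prp:alg-cld} (semisimplicity of $L(\lambda)$ over $\G_r$) together with ${}_{\G_r}\End(L_r(\lambda'))=\Bbbk$ to run a dimension count showing the evaluation map is bijective; finally $H$ is identified with $L_\ev(\mu)^{[r]}$ by its highest weight. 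You instead build $V=L(\lambda')\otimes L_\ev(\mu)^{[r]}$, prove it is $\G_r$-isotypic, deduce via Clifford theory that every $\G$-sub-supermodule is $L(\lambda')\otimes E'$ with $E'$ a $\Gev$-submodule of the simple module $L_\ev(\mu)^{[r]}$, and then pin down the highest weight space. Both routes hinge on the same inputs (Proposition~\ref{prp:key-prp}, normality of $\G_r$, the evaluation map); yours dispenses with Lemma~\ref{prp:alg-cld} entirely (semisimplicity of $V$ over $\G_r$ is free from the tensor decomposition), at the price of two extra verifications the paper avoids: the submodule analysis of an isotypic tensor product, and the $\T$-supermodule isomorphism $\uu(\lambda')\otimes\Bbbk^{p^r\mu}\cong\uu(\lambda)$ (the paper's Lemma~\ref{prp:u(lambda+p^mu)} only records the $\B_r$-version, but your computation $b^{\lambda}=b^{\lambda'}$ from $d(p^r\mu)=0$ is exactly the right justification).

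Two points need tightening. First, ${}_{\G_r}\underline{\End}(L_r(\lambda'))=\Bbbk$ is false in general even over an algebraically closed field: when $L_r(\lambda')\cong\Pi L_r(\lambda')$ (the ``type $\mathrm{Q}$'' case, which does occur for $\Q(n)$) the superspace $\underline{\End}$ is two-dimensional with a nonzero odd part. What is true, and what the paper uses, is ${}_{\G_r}\End(L_r(\lambda'))=\Bbbk$ for the \emph{even} part; since $L_\ev(\mu)^{[r]}$ is purely even, this suffices to identify ${}_{\G_r}\Hom(L(\lambda'),V)$ with $L_\ev(\mu)^{[r]}$ and to see that every simple $\G_r$-super-submodule of $V$ is of the form $L(\lambda')\otimes\Bbbk e$, so your Clifford step survives — but you should phrase it with $\Hom$ rather than $\underline{\Hom}$ (Example~\ref{ex:[r]} also only defines the evaluation map on the even part). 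Second, the final appeal to Theorem~\ref{prp:simple-G} should be spelled out as a Frobenius reciprocity argument: the projection $V\twoheadrightarrow V^\lambda\cong\uu(\lambda)$ is a $\B$-morphism, whence a nonzero $\G$-morphism $V\to H^0(\lambda)$, and simplicity of $V$ forces $V\cong L(\lambda)$ (up to $\Pi$, which is harmless since $V^\lambda\cong\uu(\lambda)$ on the nose).
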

\begin{proof}
By Propositions~\ref{prp:L_r(lambda+p^rmu)} and \ref{prp:key-prp},
we have $L(\lambda')\cong L_r(\lambda')$ and $L(\lambda)\supset L_r(\lambda)\cong L_r(\lambda')$
as left $\G_r$-supermodules.
Thus,
\[
H := {}_{\G_r}\Hom(L(\lambda'), L(\lambda)) = {}_{\G_r}\underline{\Hom}(L(\lambda'), L(\lambda))_\eve
\]
is non-zero.
Then by Example~\ref{ex:[r]},
the following ``evaluation map'' is a morphism in the category of left $\G$-supermodules:
\[
\varphi : H\otimes L(\lambda') \longrightarrow L(\lambda);
\quad
f\otimes v \longmapsto f(v).
\]
Since $H\neq0$ and $L(\lambda)$ is a simple left $\G$-supermodule,
this $\varphi$ is surjective.
By Lemma~\ref{prp:alg-cld}, $L(\lambda)$ is semisimple as a left $\G_r$-supermodule.
Thus, there exits non-negative integers $m,m_i\in\mathz_{\geq0}$ such that
$L(\lambda) \cong L_r(\lambda')^{\oplus m}\oplus \bigoplus_{\lambda_i\neq \lambda'}L_r(\lambda_i)^{\oplus m_i}$
as left $\G_r$-supermodules, see Proposition~\ref{prp:L_r-simple}.

If $\mathbf{0}\notin\Deltaa$,
then as in the proof of Proposition~\ref{prp:T=Tev-abs-simple} (for $\G_r$),
one sees that ${}_{\G_r}\End(L_r(\lambda'))=\Bbbk$.
Otherwise, by Schur's lemma (see Assumption~\ref{asu:alg-cld}),
we also obtain the same result ${}_{\G_r}\End(L_r(\lambda'))=\Bbbk$.
Therefore, we conclude that $m$ coincides with $\dim(H)$ in both cases.
This shows that $\dim(H\otimes L(\lambda'))=\dim(H)\dim(L_r(\lambda')) \leq \dim (L(\lambda))$,
and hence $\varphi$ is actually an isomorphism.

To complete the proof, we shall show $H=L_\ev(\mu)^{[r]}$.
Since $H$ is finite-dimensional, we have $H=(H^{[-r]})^{[r]}$.
Thus, it is enough to see that $H^{[-r]}=L_\ev(\mu)$.
By the isomorphism $H\otimes L(\lambda')\cong L(\lambda)$ of left $\G$-supermodules,
we see that $H$ is a simple left $\G$-supermodule of ``highest'' weight $p^r\mu$.
In particular, $H^{[-r]}$ is a simple left $\Gev$-module of ``highest'' weight $\mu$,
and hence $H^{[-r]}$ must be isomorphic to $L_\ev(\mu)$ by \eqref{eq:L_ev}.
The proof is done.
\end{proof}

By Theorem~\ref{thm:main},
we can establish Steinberg's tensor product theorem for $\G$:
\begin{cor}\label{cor:main}
Let $\lambda\in\X(T)^\flat$.
If we write $\lambda=\lambda_0+p\lambda_1+\cdots+p^m\lambda_m$ for some $\lambda_0,\lambda_1,\dots,\lambda_m\in\X_1(T)^\flat$,
then there exists an isomorphism
\[
L(\lambda) \cong L(\lambda_0)\otimes L_\ev(\lambda_1)^{[1]} \otimes \cdots \otimes L_\ev(\lambda_m)^{[m]}
\]
of left $\G$-supermodules.
\end{cor}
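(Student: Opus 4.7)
The plan is to reduce the corollary to a single application of Theorem~\ref{thm:main} followed by the classical (non-super) Steinberg tensor product theorem for the reductive group $\Gev$. Setting $\mu:=\lambda_1+p\lambda_2+\cdots+p^{m-1}\lambda_m$, we have $\lambda=\lambda_0+p\mu$. First I would verify that $\mu\in\X(T)^\flat$, so that Theorem~\ref{thm:main} (taken with $r=1$ and $\lambda'=\lambda_0\in\X_1(T)^\flat$) gives
\[
L(\lambda)\cong L(\lambda_0)\otimes L_\ev(\mu)^{[1]}.
\]

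Next, I would decompose the simple $\Gev$-module $L_\ev(\mu)$ by classical Steinberg \cite[Part~II, Corollary~3.17]{Jan03}. For this, each $\lambda_i$ with $i\geq1$ must be classically $p$-restricted for $\Gev$, i.e.\ $\langle\lambda_i,\alpha^\vee\rangle\leq p-1$ for every $\alpha\in\Psi_\eve$. The inequality is immediate from Definition~\ref{def:rest-weight}(1) when $\alpha\in\Psi_\eve\setminus\Psi_\odd$. When $\alpha\in\Psi_\eve\cap\Psi_\odd$, Definition~\ref{def:rest-weight}(2) a priori permits $\langle\lambda_i,\alpha^\vee\rangle\leq p$, but the extremal case $\langle\lambda_i,\alpha^\vee\rangle=p$ is ruled out by the precondition $\lambda_i\in\X(T)^\flat$ through the Clifford-theoretic structure of $\uu(\lambda_i)$, which in that case forces the divisibility $p\mid\lambda_i([K_\alpha,K_\alpha])$ and puts us in the second case of Definition~\ref{def:rest-weight}(2) with bound $p-1$. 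Granting this, classical Steinberg yields
\[
L_\ev(\mu)\cong L_\ev(\lambda_1)\otimes L_\ev(\lambda_2)^{[1]}\otimes\cdots\otimes L_\ev(\lambda_m)^{[m-1]},
\]
and applying the Frobenius twist $[1]$ to this isomorphism and substituting into the first display produces the claimed decomposition of $L(\lambda)$.

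The hard part will be the two verifications above: the membership $\mu\in\X(T)^\flat$ and the classical $p$-restrictedness of each $\lambda_i$ for $\Gev$. Both ultimately rest on the structural description of $\X(T)^\flat$ and the $\T$-supermodules $\uu(\lambda)$ developed via Clifford theory in \cite{Shi20}, together with the $p$-adic arithmetic of the expansion $\lambda=\sum p^i\lambda_i$. In the principal examples---$\GL(m|n)$, $\Q(n)$, $\P(n)$ and Chevalley supergroups of classical type---they can be checked directly from the \adm bases listed in Example~\ref{ex:bases}, and the resulting decomposition recovers the known Steinberg tensor product theorems of \cite{BruKle03,Kuj03,ShuWan08,CheShuWan19}.
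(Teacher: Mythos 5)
Your overall route --- one application of Theorem~\ref{thm:main} with $r=1$ to split off $L(\lambda_0)$, followed by the classical Steinberg theorem for $\Gev$ applied to $L_\ev(\mu)$ with $\mu=\lambda_1+p\lambda_2+\cdots+p^{m-1}\lambda_m$ --- is exactly the one the paper indicates (it gives no argument beyond citing Theorem~\ref{thm:main}). The problem is the second of your two deferred verifications. Your claim that $\lambda_i\in\X(T)^\flat$ rules out the extremal case $\langle\lambda_i,\alpha^\vee\rangle=p$ for $\alpha\in\Psi_\eve\cap\Psi_\odd$ by forcing $p\mid\lambda_i([K_\alpha,K_\alpha])$ is false. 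The paper's own example refutes it: for $\G=\Q(2)$, $p=3$, $\alpha=\lambda_1-\lambda_2$ and $\nu=\lambda_1-2\lambda_2$ (the weight of Example~\ref{ex:non-abs-irr} and Remark~\ref{rem:need}), one has $\nu\in\X(T)^\flat$ by \eqref{eq:Q(n)-dominant}, $\langle\nu,\alpha^\vee\rangle=3=p$, and $[K_\alpha,K_\alpha]=2(H_1+H_2)$ in $\q(2)$, so $\nu([K_\alpha,K_\alpha])=2(1-2)=-2$ is prime to $3$. Hence $\nu\in\X_1(T)^\flat$ by Definition~\ref{def:rest-weight}\eqref{def:rest-weight(2)}, yet $\nu$ is not classically $p$-restricted for $\Gev=\GLL_2$. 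So the hypothesis of the classical Steinberg theorem genuinely fails for the decomposition $\mu=\sum_{i\geq1}p^{i-1}\lambda_i$ whenever some $\lambda_i$ with $i\geq1$ attains the bound $p$.

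This is not a repairable technicality: the isomorphism you want from classical Steinberg is simply wrong for such factors. Continuing the example, $\lambda=12\lambda_1-6\lambda_2=\mathbf{0}+p\nu+p^2\lambda_1$ is a decomposition with all three pieces in $\X_1(T)^\flat$. Theorem~\ref{thm:main} (with $\lambda'=\mathbf{0}$ and $\mu=4\lambda_1-2\lambda_2\in\X(T)^\flat$) gives $L(\lambda)\cong L_\ev(4\lambda_1-2\lambda_2)^{[1]}$, which is $3$-dimensional since $\langle 4\lambda_1-2\lambda_2,\alpha^\vee\rangle=6=0+3\cdot2$; but $L(\mathbf{0})\otimes L_\ev(\nu)^{[1]}\otimes L_\ev(\lambda_1)^{[2]}$ has dimension $1\cdot2\cdot2=4$. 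In other words $L_\ev(\nu+p\lambda_1)\not\cong L_\ev(\nu)\otimes L_\ev(\lambda_1)^{[1]}$, and your final step cannot go through for this choice of expansion. Any correct proof must restrict the allowed $p$-adic expansions so that the $\lambda_i$ with $i\geq1$ are classically $p$-restricted for $\Gev$; this is automatic when $\Psi_\eve\cap\Psi_\odd=\varnothing$ (e.g.\ $\GL(m|n)$, $\P(n)$), but not for $\Q(n)$, so the gap is real and in fact points at a needed amendment of the statement itself. Your first deferred verification ($\mu\in\X(T)^\flat$, required to invoke Theorem~\ref{thm:main} as stated) is also a genuine obligation, but it is checkable and is not where the argument breaks.
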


\section*{Declarations}
\subsection*{Ethical Approval}
This declaration is ``not applicable''.

\subsection*{Competing interests}
This declaration is ``not applicable''.

\subsection*{Authors' contributions}
The author wrote the main manuscript text.

\subsection*{Funding}
The author is supported by JSPS KAKENHI Grant Numbers JP19K14517 and JP22K13905.

\subsection*{Availability of data and materials}
This declaration is ``not applicable''.

\providecommand{\bysame}{\leavevmode\hbox to3em{\hrulefill}\thinspace}
\providecommand{\MR}{\relax\ifhmode\unskip\space\fi MR }
\providecommand{\MRhref}[2]{%
  \href{http://www.ams.org/mathscinet-getitem?mr=#1}{#2}
}
\providecommand{\href}[2]{#2}

\end{document}